   \newcommand{\lab}[1]{\label{#1}}                % hides labels
   \newcommand{\thlab}[1]{\thlabel{#1}} % hides labels in theorems etc.  
\theoremstyle{plain}
\newtheorem{theorem}{Theorem}[section]
\newtheorem{lemma}[theorem]{Lemma}
\newtheorem{claim}[theorem]{Claim}
\newtheorem{observation}[theorem]{Observation}
\newtheorem{conjecture}[theorem]{Conjecture}
\newtheorem{remark}[theorem]{Remark}
\theoremstyle{definition}
\newtheorem{definition}[theorem]{Definition}
\newcommand{\eps}{\ensuremath{\varepsilon}}
\newcommand{\calG}{\ensuremath{\mathcal G}}
\newcommand{\Gnp}{\ensuremath{G(n,p)}}
\newcommand{\Gnq}{\ensuremath{G(n,q)}}
\renewcommand\tilde[1]{\widetilde{#1}}
\newcommand\se{\subseteq}
\newcommand\sm{\setminus}
\renewcommand\to{\rightarrow}
\newcommand{\cF}{{\mathcal F}}
\newcommand{\cH}{{\mathcal H}}
\newcommand{\cG}{{\mathcal G}}
\newcommand{\cS}{{\mathcal S}}
\newcommand{\cP}{{\mathcal P}}
\title{The threshold bias of the clique-factor game}
\author{  
  Anita Liebenau
  \thanks{  
  School of Mathematics and Statistics, UNSW Sydney, NSW 2052, Australia. Email: {\tt a.liebenau@unsw.edu.au.} 
  Supported by the Australian research council (DE170100789 and DP180103684).}
  \and
  Rajko Nenadov 
  \thanks{
  	Department of Mathematics, ETH Zurich, Switzerland. Email: {\tt rajko.nenadov@math.ethz.ch.} Supported in part by SNSF grant 200021-175573.
  }
}
\date{}
\begin{document}
\maketitle

\begin{abstract}
Let $r \ge 4$ be an integer and consider the following game on the complete graph $K_n$ for $n \in r \mathbb{Z}$: Two players, Maker and Breaker, alternately claim previously unclaimed edges of $K_n$ such that in each turn Maker claims one and Breaker claims $b \in \mathbb{N}$ edges.  Maker wins if her graph contains a $K_r$-factor, that is a collection of $n/r$ vertex-disjoint copies of $K_r$, and Breaker wins otherwise. In other words, we consider a $b$-biased $K_r$-factor Maker--Breaker game. We show that the threshold bias for this game is of order $n^{2/(r+2)}$. This makes a step towards determining the threshold bias for making bounded-degree spanning graphs and extends a result of Allen et al.\ who resolved the case $r \in \{3,4\}$ up to a logarithmic factor.
\end{abstract}

\section{Introduction}
In this paper we consider biased positional games played on the edge set of the complete graph $K_n$ in which the winning sets are spanning subgraphs. Biased positional games were introduced by Chv\'atal and Erd\H{o}s~\cite{ce1978} in 1978 and form a central part of positional games, see, for example, the monograph by Beck~\cite{BeckBook}, or~\cite{hkss2014, k2014} for a more recent treatment. 

Let $X$ be a finite set and let ${\cF}\subseteq 2^X$ be a family of subsets. The set $X$ is called the {\em board} and ${\cF}$ is referred to as the family of {\em winning sets}. In the $b$-biased \emph{Maker--Breaker} game $(X,{\cF})$, two players called Maker and Breaker play in rounds. In every round Maker claims one previously unclaimed element of $X$ and Breaker responds by claiming $b$ previously unclaimed elements of $X$. Maker wins if she claims  all elements of some $F\in {\cF}$, otherwise Breaker wins the game. By definition a draw is impossible and thus exactly one player has a winning strategy since Maker--Breaker games are perfect information games. 

A certain class of games that received particular attention are Maker--Breaker games played on the edge set of the complete graph on $n$ vertices, denoted by $K_n$, in which case $X$ is the set of all unordered 2-element subsets of $K_n$, denoted by $\binom{[n]}{2}$. In the {\em connectivity game}, the {\em perfect matching game}, the {\em Hamiltonicity game}, and the {\em triangle game}, for example, the winning sets are the edge sets of all spanning trees, all perfect matchings, all Hamilton cycles, and all copies of $K_3$, respectively. When $n$ is large enough these games are heavily in favour of Maker in the {\em unbiased} version when $b=1$. Chv\'atal and Erd\H{o}s~\cite{ce1978} therefore examined the biased variant for these games. Define the threshold bias $b^*$ of a game~$(X,\cF)$ to be the largest integer $b$ such that Maker wins the $b$-biased Maker--Breaker game~$(X,\cF)$. Note that Maker--Breaker games are bias-monotone, that is Maker wins for every $b\le b^*$ and Breaker wins for every $b>b^*$. 

Chv\'atal and Erd\H{o}s found that the threshold bias $b^*$ is of the order $n\ln n$ for the connectivity, the perfect matching, and the Hamiltonicity game; and of order $\sqrt{n}$ for the triangle game. The order of the threshold bias for an $H$-game, the game in which winning sets correspond to copies of $H$ in $K_n$, was later determined by Bednarska and {\L}uczak~\cite{bl2000} for any fixed graph $H$.
Except for the connectivity game, all the aforementioned games 
can be cast in the following common form. Given a graph $H=H_n$,  what is the threshold bias $b^*$ of the Maker--Breaker game played on $K_n$ in which all winning sets are copies of $H_n$? In the case of Hamiltonicity we simply have $H_n = C_n$, a cycle of length $n$, and in the perfect matching game $H_n$ is a collection of $n/2$ vertex disjoint edges. 

There are choices of $H_n$ for which Maker cannot win even if $b = 1$. A trivial such example is $H_n = K_n$, however even for $H_n$ being a complete graph with only $2 \log n$ vertices Maker cannot win the $H_n$-game \cite[Theorem 6.4]{BeckBook}. It turns out that this can be avoided if we restrict our attention to graphs with maximum degree some constant $\Delta$, and let $n$ be sufficiently large. Furthermore, rather than asking for the threshold bias for a specific $H_n$, we seek a \emph{universal} upper bound: given $\Delta$ and $n$, what is the largest $b_\Delta = b_\Delta(n)$ such that, on the one hand, for every graph $H_n$ with at most $n$ vertices and maximum degree at most $\Delta$ Maker can win a $b$-biased $H_n$-game with $b \le b_\Delta$, and on the other hand there exists at least one such $H_n$ for which Breaker can win with bias $b = b_\Delta$ + 1?

Recently, Allen, B\"ottcher, Kohayakawa, Naves, and Person~\cite{abknp2017} showed that $b_\Delta(n)$ is of order at least $\Omega((n/\log n)^{1/\Delta})$\footnote{All asymptotic statements refer to $n$, the number of vertices, tending to $\infty$.}. The triangle-preventing strategy for Breaker due to Chv\'atal and Erd\H{o}s~\cite{ce1978} shows that this is tight up to a factor of $\sqrt{\log n}$ when $\Delta = 2$. Furthermore, when $\Delta = 3$ the authors of~\cite{abknp2017} show that Breaker can win a \emph{$K_4$-factor} game for some $b = \Omega(n^{1/3})$, which shows (almost) optimality in this case as well. Here the $K_4$-factor and, in general, a $K_r$-factor, corresponds to a graph $H_n$ which consists of $\lfloor n/r \rfloor$ vertex-disjoint copies of $K_r$. However, the authors of~\cite{abknp2017} have expressed a belief that their lower bound of $\Omega((n/\log n)^{1/\Delta})$, in general, is not optimal. We provide evidence for this feeling by determining the order of the threshold bias for the $K_{\Delta + 1}$-factor game for all $\Delta \ge 3$.  For $\Delta = 3$, the threshold bias matches the upper bound in~\cite{abknp2017}, while for $\Delta \ge 4$ the exponent of $n$ of the threshold bias is strictly larger than $1/\Delta$.

\begin{theorem} \label{thm:main}
  For any integer $r \ge 4$ there exist $c, C > 0$ such that the following holds for every $n \in r \mathbb{Z}$.
  \begin{enumerate}[label={(\roman*)}]
    \item\label{main:M} If $b < c n^{2/(r+2)}$ then Maker has a winning strategy in the $b$-biased $K_r$-factor game played on the edge set of $K_n$. 

    \item\label{main:B} If $b > C n^{2/(r+2)}$ then Breaker has a winning strategy in the $b$-biased $K_r$-factor game played on the edge set of $K_n$. 
  \end{enumerate}
\end{theorem}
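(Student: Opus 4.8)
The plan is to prove the two inequalities separately. The common source of the exponent is the identity $m_2(K_{r+1}) = (r+2)/2$, so that $n^{2/(r+2)} = n^{1/m_2(K_{r+1})}$; by the Bednarska--{\L}uczak theorem this is precisely the threshold bias of the single $K_{r+1}$-game, and, via $m_2(K_{r-1}) = r/2$, it is also the bias at which the $K_{r-1}$-game played on a board of $\approx n^{1-2/(r+2)} = n^{r/(r+2)}$ vertices (the order of the smallest degree Maker can afford to give \emph{all} $n$ vertices) tips from Maker to Breaker. Write $G_M$ for Maker's graph.

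\textbf{Part \ref{main:B} (Breaker).} I would exploit that a $K_r$-factor forces every vertex $v$ to satisfy that $G_M[N_{G_M}(v)]$ contains a copy of $K_{r-1}$. Fix $C$ large (depending only on $r$). Since Maker claims at most $\binom{n}{2}/(b+1) < n^2/b$ edges in total, at the end $\sum_v \deg_{G_M}(v) < n^2/b$, so at most $n/2$ vertices have final degree exceeding $D_0 := (2/C)\, n^{r/(r+2)}$; in particular the set $L$ of vertices of final degree at most $D_0$ has $|L| \ge n/2$, and also some vertex has final degree below $n/b < D_0$. Breaker fixes in advance an arbitrary set $W$ with $|W| = \lceil n/2 \rceil + 1$ and plays so as to maintain the invariant: for every $w \in W$ whose current degree is at most $D_0$, the graph $G_M[N_{G_M}(w)]$ is $K_{r-1}$-free. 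For a single such $w$ this is exactly a Breaker win in the $K_{r-1}$-game played on the (at most $D_0$-vertex) current link of $w$, which Breaker can secure because $b > C n^{2/(r+2)} \ge C_{r-1} D_0^{2/r}$ once $C$ is large, $C_{r-1}$ being the Bednarska--{\L}uczak constant for the $K_{r-1}$-game. To run these at most $n$ link-games simultaneously within the single budget $b$, I would use one potential/weight function summing, over all $w \in W$ of current degree $\le D_0$ and all partial copies of $K_{r-1}$ inside $N_{G_M}(w)$, a weight exponential in the number of still-missing edges, and show by the standard Erd\H{o}s--Selfridge/Beck-type exchange argument that it stays below $1$; the extra factor $n$ from the number of centres costs only lower-order terms, absorbed into $C$. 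Given the invariant, at the end pick $v \in W \cap L$ (nonempty since $|W| + |L| > n$); then $N_{G_M}(v)$ is $K_{r-1}$-free, $v$ lies in no $K_r$ of $G_M$, and Maker has no $K_r$-factor.

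\textbf{Part \ref{main:M} (Maker).} I would run an absorption scheme in the spirit of the Maker strategies for factor games in~\cite{abknp2017}, but built from clique-specific gadgets. \emph{(i) Reserve.} Set aside a random set $A$ of size $\varepsilon n$ and, interleaved with everything else, have Maker build a robust \emph{absorbing structure} on $A$: according to a fixed template, many vertex-disjoint small ``flexible'' clique gadgets --- copies of $K_{r+1}$, and more generally gadgets whose relevant $2$-density is $(r+2)/2$ --- each built not once but $\Omega(b)$-robustly, so that Breaker cannot destroy them all. This is the step that uses $b = o(n^{2/(r+2)}) = o(n^{1/m_2(K_{r+1})})$, via a Bednarska--{\L}uczak-type ``play randomly, count robust copies'' argument; it is strictly stronger than what the generic degree-$(r-1)$ building blocks of~\cite{abknp2017} give once $r \ge 5$, since there the relevant $m_2$ is larger. \emph{(ii) Cover.} On $V \setminus A$, using a random-greedy (nibble-type) procedure, Maker tiles all but a set $R$ of size $o(n)$ with vertex-disjoint copies of $K_r$; since $m_2(K_r) = (r+1)/2 < (r+2)/2$, building and, whenever Breaker interferes, rebuilding these copies stays comfortably within budget. \emph{(iii) Absorb.} The flexibility of the gadgets on $A$ lets Maker re-route so that $A \cup R$ is tiled by vertex-disjoint copies of $K_r$, completing the factor.

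\textbf{Main obstacle.} On the Breaker side, the delicate point is running the (up to $n$) link-games simultaneously inside the single budget $b$ while keeping the numerology governed by $D_0$, so that the exponent stays at exactly $2/(r+2)$ and not larger; this needs the full Bednarska--{\L}uczak-strength Breaker estimate for the $K_{r-1}$-game, not merely Erd\H{o}s--Selfridge. On the Maker side, the crux is robustness under interleaving: Maker must guarantee that the flexibility reserved in step~(i) survives everything Breaker does during steps~(i)--(ii), so that step~(iii) can actually be carried out, and quantifying ``enough robust gadgets survive'' is exactly where the $K_{r+1}$-level $2$-density enters and pins the exponent at $2/(r+2)$.
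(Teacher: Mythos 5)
Your Breaker strategy has a fatal gap. The invariant you ask Breaker to maintain --- that every $w\in W$ of current degree at most $D_0$ has $K_{r-1}$-free link --- is equivalent to preventing every copy of $K_r$ that contains a low-degree vertex of $W$. But Maker can simply play the Bednarska--{\L}uczak random strategy on the sub-board $\binom{W}{2}$: since $m_2(K_r)=(r+1)/2$, she wins the $K_r$-game on $|W|\approx n/2$ vertices for any $b\le c'n^{2/(r+1)}\gg Cn^{2/(r+2)}$, and she does so while all degrees in her graph stay far below $D_0$. So the invariant is unmaintainable for \emph{any} Breaker strategy, not just a potential-function one; no choice of $C$ rescues it, because the obstruction is the exponent $2/(r+1)$ versus $2/(r+2)$. (Your own caveat that one needs ``full Bednarska--{\L}uczak strength, not merely Erd\H{o}s--Selfridge'' points at a second, independent problem: a single exponential weight function only yields the Beck-type exponent $2/(r-2)$ for the link games, and the genuine BL bound is not obtained by one potential function.) The fix is to protect \emph{one} vertex $v$ and to make its low degree part of Breaker's strategy rather than an averaging consequence: Breaker spends $b/2$ edges per round on edges at $v$, forcing $|N_M(v)|\le 2n/b$, and the other $b/2$ playing a $K_{r-1}$-game on the board $N_M(v)$. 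Since that board is revealed only as Maker extends $N_M(v)$, this requires a \emph{dynamic-board} version of the Bednarska--{\L}uczak theorem (one must redo the Erd\H{o}s--Selfridge--Beck potential argument and the fan/flower lemmas for a board that grows during play); this is the actual technical content of the Breaker direction.

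Your Maker outline has the right shape (absorption) and the right numerology, but it omits the mechanism that defeats Breaker's vertex-isolation play, which is the whole difficulty: a $K_r$-factor is not locally resilient, so ``play randomly in a reserved set and count robust gadgets'' cannot guarantee that the specific vertices Breaker attacks end up absorbable --- yet those are exactly the vertices left in your remainder $R$. What is needed is (a) the Allen--B\"ottcher--Kohayakawa--Naves--Person theorem, which lets Maker claim a subgraph of $G(n,p)$ with $p=Kn^{-2/(r+2)}$ controlling not only minimum degree but also the number of edges inside \emph{every} vertex's neighbourhood; (b) a K{\L}R/container argument upgrading this to ``every subset of size $\alpha np$ of every link contains $K_{r-1}$,'' so that every vertex, however Breaker plays, can be completed to a $K_r$ inside the absorber; and (c) a concrete absorber with enough flexibility at every scale --- the published proof uses cascaded families of $(r,\ell)$-chains built from copies of $K_{r+1}^-$, a second sparser random graph to certify the chain-linking property, and Haxell's hypergraph matching theorem to match the $\Theta(n^{1-\beta})$ leftover vertices into the absorber disjointly, where a greedy choice would fail. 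Without these ingredients step (iii) of your plan cannot be carried out.
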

\begin{remark}
By taking $c$ and $C$ to be sufficiently small and large, respectively, we have that the theorem vacuously holds for all $n < n_0$ for any chosen $n_0$. Therefore, we assume throughout the paper that $n$ is as large as needed for the calculations to be correct.
\end{remark}

For $b \ge C n^{2/(r+2)}$ we show that Breaker has a strategy to `isolate' one particular vertex from being in a copy of $K_r$, which clearly prevents Maker's graphs from containing a $K_r$-factor. Somewhat surprisingly, though not uncommon in extremal and probabilistic combinatorics, this turns out to be Breaker's best strategy: as soon as he cannot achieve this Maker is able to build a $K_r$-factor. 

Theorem \ref{thm:main} suggests the following. 
\begin{conjecture}
For all $\Delta \ge 3$, $b_\Delta = \Theta(n^{2/(\Delta + 3)}).$
\end{conjecture}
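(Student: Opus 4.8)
\medskip
\noindent\emph{Proof strategy.}
The upper bound $b_\Delta=O(n^{2/(\Delta+3)})$ is immediate from Theorem~\ref{thm:main}\ref{main:B}: the $K_{\Delta+1}$-factor is a graph on $n$ vertices with maximum degree exactly $\Delta$, so Breaker wins that particular $H_n$-game as soon as $b>Cn^{2/(\Delta+3)}$, which already witnesses $b_\Delta\le Cn^{2/(\Delta+3)}$. For $\Delta=3$ one has $2/(\Delta+3)=1/\Delta=1/3$, so combining this upper bound with the lower bound $\Omega((n/\log n)^{1/3})$ of Allen et al.\ settles the conjecture up to a $(\log n)^{1/3}$ factor; for $\Delta\ge4$, however, $n^{1/\Delta}\ll n^{2/(\Delta+3)}$, so their bound is genuinely too weak and the whole content of the conjecture lies in the universal lower bound $b_\Delta\ge c\,n^{2/(\Delta+3)}$. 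This asks for a \emph{single} Maker strategy that, with bias $b\le c\,n^{2/(\Delta+3)}$, builds \emph{every} graph $H=H_n$ on at most $n$ vertices with $\Delta(H)\le\Delta$; equivalently, one must show that, up to constants, the clique-factor is the hardest bounded-degree spanning graph for Maker, and the plan is to reduce the general case to a robust version of Theorem~\ref{thm:main}\ref{main:M}.

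The natural route is a game-theoretic embedding scheme. Fix $H$ and a degeneracy order $v_1,\dots,v_N$ of $V(H)$, so that each $v_i$ has at most $\Delta$ neighbours among $v_1,\dots,v_{i-1}$ (such an order exists since $\Delta(H)\le\Delta$). Maker plays so as to build, greedily in this order, an embedding $\varphi$ of $H$ into $V(K_n)$ that uses only her own edges: to place $v_i$ she needs a vertex $w\in V(K_n)$, not yet used and not on a short list of forbidden images, with $w\varphi(u)$ a Maker edge for every already-placed neighbour $u$ of $v_i$. The crucial sub-task is then to guarantee, throughout the game, that for \emph{every} set $S\subseteq V(K_n)$ with $|S|\le\Delta$ that can still arise as such a back-neighbourhood, the common Maker-neighbourhood of $S$ stays large and rich in unused vertices, while Breaker tries to drain one of them. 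For $|S|=\Delta$ this is exactly the obstruction that the $K_{\Delta+1}$-factor analysis handles --- there the last vertex of a clique needs a fresh common neighbour of the other $\Delta$ --- so the aim is to run a parametrised, simultaneous version of the argument behind Theorem~\ref{thm:main}\ref{main:M} against all relevant $\Delta$-sets at once, coupled with a danger-function or pairing argument that keeps the partial embedding from ever reaching a dead end.

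The main obstacle is precisely this globalisation. In the $K_{\Delta+1}$-factor game the copies are vertex-disjoint, which decouples the local tasks and underpins the clean potential-function analysis of Theorem~\ref{thm:main}\ref{main:M}; for a general bounded-degree $H$ --- a long path, a $\Delta$-regular expander, a grid --- the back-neighbourhoods overlap and interact, and a careless embedding can trap Maker even while every single common neighbourhood is still healthy. Handling connected, genuinely spanning structures is exactly where the known machinery stalls: the Bednarska--{\L}uczak random strategy, or building a pseudorandom Maker-graph and invoking a sparse blow-up lemma, tolerates only bias of order $n^{1/\Delta}$, strictly below $n^{2/(\Delta+3)}$ once $\Delta\ge4$. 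Closing this gap seems to require a genuinely game-theoretic ``Maker's blow-up lemma'' --- a way for Maker, exploiting that she responds to Breaker rather than playing blindly, to embed an arbitrary bounded-degree spanning graph into her graph at a density well below the pseudorandom embedding threshold --- with the clique-factor recovered as the extremal bottleneck. I expect the resolution of the conjecture to hinge on establishing such a statement, with the analysis of the present paper providing both the extremal example and the local core.
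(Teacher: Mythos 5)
The statement you were asked about is a conjecture: the paper gives no proof of it, only Theorem~\ref{thm:main} as supporting evidence together with analogies to the Hajnal--Szemer\'edi / Bollob\'as--Eldridge--Catlin and Johansson--Kahn--Vu settings. Measured against that, the only part of your proposal that is an actual argument is correct and is the same observation the paper's discussion relies on: taking $H_n$ to be a $K_{\Delta+1}$-factor, Theorem~\ref{thm:main}~\ref{main:B} with $r=\Delta+1$ gives $b_\Delta \le C n^{2/(\Delta+3)}$ (modulo the cosmetic point that Theorem~\ref{thm:main} is stated for $n\in(\Delta+1)\mathbb{Z}$ while $b_\Delta$ quantifies over all $n$, so one should say a word about $\lfloor n/(\Delta+1)\rfloor$ cliques plus leftover vertices). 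Your $\Delta=3$ arithmetic and the comparison with the $\Omega((n/\log n)^{1/\Delta})$ bound of Allen et al.\ are also accurate.

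The gap is everything else: the universal lower bound $b_\Delta \ge c\,n^{2/(\Delta+3)}$, which is the entire content of the conjecture beyond Theorem~\ref{thm:main}, is not established by your text. The degeneracy-order embedding scheme is a plan, not a proof: the step that would have to be carried out --- maintaining, against an adversarial Breaker at bias $n^{2/(\Delta+3)}$, simultaneous control of the common Maker-neighbourhoods of \emph{all} relevant $\le\Delta$-sets while steering the partial embedding away from dead ends --- is exactly what no existing tool provides, since the Bednarska--{\L}uczak random strategy, the Ferber--Krivelevich--Naves local-resilience route, and the Allen et al.\ pseudorandom-graph-plus-sparse-blow-up route all stall at bias $O((n/\log n)^{1/\Delta})$, as you note yourself. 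The ``Maker's blow-up lemma'' you invoke is precisely the missing ingredient and you give no construction of it. So your proposal correctly settles the easy direction, honestly identifies the hard one, and matches the paper in leaving it open; it should be read as a research program consistent with the statement's status as a conjecture, not as a proof.
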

In other words, we believe that it is not significantly harder for Maker to build any other graph of maximum degree $\Delta$ than a $K_{\Delta+1}$-factor. 
We take justification for this assumption from two similar settings in extremal graph theory and in random graph theory. The celebrated theorem of Hajnal and Szemer\'edi~\cite{hs1970} states that every graph $G$ of minimum degree at least $(1-1/(\Delta+1))n$ contains a $K_{\Delta + 1}$-factor, and that condition is tight. 
Bollob\'as and Eldridge~\cite{be1978}, and independently Catlin~\cite{c1976}, conjectured that the condition $\delta(G)\ge (1-1/(\Delta + 1))n$ is in fact sufficient to contain every graph $H$ with $n$ vertices and maximum degree $\Delta$. 
A similar assumption is made on the threshold bias $p^*$ for the random graph $G(n,p)$ to contain a certain graph $H_n$. Johansson, Kahn and Vu~\cite{jkv2008} showed that $p^*(n)=(n^{-1}\log^{1/\Delta}n)^{2/(\Delta+1)}$ is a threshold function for $G(n,p)$ to contain a $K_{\Delta + 1}$-factor. It is folklore belief that, for every graph $H_n$ on at most $n$ vertices and of maximum degree $\Delta$, the function  $p^*(n)$ is in fact an upper bound on the threshold functions for $G(n,p)$ to contain  $H_n$, see for example Conjecture~1.3 in~\cite{fln2017}. Supporting evidence towards this conjecture is given by Ferber, Luh and Nguyen~\cite{fln2017} who prove it when $H_n$ is almost-spanning, that is when $H_n$ occupies at most $(1-\eps)n$ vertices.

\medskip
\noindent
{\bf Structure of the paper.} \\
In Section 2 we take a little detour and discuss the \emph{probabilistic intuition}, also called the {\em Erd\H{o}s paradigm}. While this paradigm in its basic form does not apply to the problem we consider here, a variation of it due to Allen et al.~\cite{abknp2017} (Theorem \ref{thm:maker_rg}) turns out to give the correct answer. This result will also serve us to provide further intuition why the threshold bias in Theorem~\ref{thm:main} is of the order $n^{2/(r+2)}$, or, more precisely, why Breaker is not able to isolate a single vertex from being in a copy of $K_r$ for $b < cn^{2/(r+2)}$. In Section 3 we fix notation and state preliminary results. In Section 4, we provide Maker's strategy and prove Theorem~\ref{thm:main}~\ref{main:M}. Section 5 is devoted to Breaker's strategy, i.e.~Theorem~\ref{thm:main}~\ref{main:B}.

\section{Probabilistic Intuition Revised} 
Chv\'atal and Erd\H{o}s~\cite{ce1978} found a surprising connection between biased positional games and random graphs. Replace Maker and Breaker by RandomMaker and RandomBreaker, respectively, who choose their edges uniformly at random from all unclaimed edges. At the end of the game, the graph of RandomMaker has the same distribution as $G(n,m)$, a graph with $m$ edges chosen uniformly at random from all $\binom{n}{2}$ possible edges, where $m$ is roughly $\binom{n}{2}/(b+1)$ (we omit floor and ceiling signs unless crucial). It is well known~\cite{JLRbook} that $G(n,m)$ is (a) connected, (b) has a perfect matching, or (c) has a Hamilton cycle with probability tending to 0 if $m\ll n \ln n$, and with probability tending to 1 if $m\gg n \ln n$. That is, the threshold biases of the random version of the connectivity, the perfect matching, and the Hamiltonicity game are of the order $n\ln n$. The results in~\cite{ce1978} imply that the threshold bias $b^*$ in the game with clever players is of the same order of magnitude for the connectivity, the perfect matching, and the Hamiltonicity game. This phenomenon is often called the {\em random graph intuition}, or the {\em Erd\H{o}s paradigm}. In fact, it turns out that the threshold biases for the random and the clever game are asymptotically equal in the connectivity game~\cite{gs2009} and in the Hamiltonicity game~\cite{k2011}. 

It is one of the central questions in positional games to classify games for which the random graph intuition applies. A game which does very much not obey the random graph intuition is the above-mentioned triangle game or, more generally, an $H$-game for a fixed graph $H$ which contains a cycle. It is well-known that the threshold for the appearance of a triangle in $G(n,m)$ is of the order $\Theta(n)$ (see, e.g.,~\cite{JLRbook}). Chv\'atal and Erd\H{o}s~\cite{ce1978}, however, showed that Breaker can prevent a triangle in Maker's graph when playing with a bias $b =\Theta(\sqrt{n})$. 

It follows from Beck's winning criterion for Breaker~\cite{b1982}, a generalisation of the classical Erd\H{o}s-Selfridge criterion to biased games, that Breaker can {\em always} play at least as good as RandomBreaker against RandomMaker. A result by Chv\'atal and Erd\H{o}s~\cite{ce1978} shows that in some cases Breaker can play in a smarter way than just claiming edges at random. Bednarska and \L{}uczak~\cite{bl2000} verified that this is also the case for any $H$-game. However, the main message of their paper is not that the probabilistic intuition completely fails in these games, but rather that it has to be slightly adjusted. 

As mentioned before, if both players play at random then Maker's graph is distributed as a random graph $G(n, m)$ for $m = \binom{n}{2} / (b + 1)$. If Breaker does not play at random then by Maker still playing uniformly at random from the set of all \emph{available} elements we lose control over the distribution of its graphs. To circumvent this, Bednarska and \L{}uczak~\cite{bl2000} suggested the following strategy for Maker: choose a next element uniformly at random from the set of \emph{all} elements (even those that have been previously claimed) and take it only if it forms a valid move, i.e.\ if it has not been previously claimed. Observe that Maker's graph obtained following this strategy is not a random graph but rather a subgraph obtained from a random graph after deleting a few edges. Thus, even though we might not have a fine control over the actual Maker's graph, knowing that it is a subgraph of a random graph turns out to give sufficient information to win an $H$-game. In particular, they show that when $b$ is not too large, the random graph $G(n,\binom{n}{2}/(b+1))$ is {\em globally robust} with respect to containing a copy of $H$, which in turn implies that Maker has a winning strategy. That is, even after removing any small proportion of the edges the plucked random graph still contains a copy of $H$. For a precise definition of robustness we refer the reader to~\cite{sv2008} where a systematic study of this concept was initiated. 

The next step in explaining a connection between Maker--Breaker games and random graphs was done by Ferber, Krivelevich and Naves~\cite{fkn2015}. While the strategy of playing purely at random works well in the case of $H$-games for graphs $H$ of fixed size, it fails when the winning sets are spanning subgraphs of $K_n$ as Breaker can isolate a vertex before Maker is likely to claim an edge incident to that vertex. To manifest the connection between Maker--Breaker games and random graphs for these spanning-graph games, Ferber, Krivelevich and Naves~\cite{fkn2015} provided a {\em local-resilience} analogue to the theorem in~\cite{bl2000} and showed that in a $b$-biased game played on $K_n$, Maker can claim a subgraph of $G(n,m)$ for $m=\Theta(n^2/b)$ such that each vertex is incident to $\Omega(n/b)$ Maker's edges. Thus, if Maker tries to achieve a graph property $\cP$ that cannot be destroyed by deleting a fixed proportion of edges at each vertex then the strategy in~\cite{fkn2015} yields a winning strategy for Maker. In particular, lower bounds on the threshold bias for several games like the perfect matching game, the connectivity and the Hamiltonicity game could be re-established this way, though with a sub-optimal constant factor. 

However, as the reader could guess, the approach via local resilience does not work for all spanning-structure Maker--Breaker games on $K_n$. The property of containing a $K_3$-factor, for example, is not locally resilient as all triangles in $G(n,m)$ containing a fixed vertex $v$ can be destroyed by removing a vanishing proportion of edges incident to every vertex, see e.g.~\cite{huang2012bandwidth}. For the same reason, the property of containing a $K_r$-factor, $r\ge 4$, is not locally resilient and the approach in~\cite{fkn2015} is not applicable. 
Circumventing the short-coming of the resilience-type approaches, Allen, B\"ottcher, Kohayakawa, Naves, and Person~\cite{abknp2017} finally show that Maker can also assume not only that its graph is a subgraph of a random graph with minimum degree of order $n/b$, but also that the neighbourhood of each vertex has sufficiently many edges. The following theorem makes this precise. For a real $p \in [0, 1]$ and an integer $n$, we write $\Gamma \sim G(n,p)$ if $\Gamma$ is formed by starting with an empty graph on $n$ vertices and adding each possible edge with probability $p$, independently of all other edges. Furthermore, $\Gamma\sim G(n,p)$ satisfies a certain property $\cP$ {\em asymptotically almost surely (a.a.s)} if the probability that $\Gamma$ satisfies $\cP$ tends to 1 as $n\to\infty$.

\begin{theorem} \label{thm:maker_rg}
  For every $n$, $\gamma = \gamma(n) \in (0, 1)$, $p \ge 10^8 \gamma^{-2} n^{-1/2}$, and $b \le 10^{-24} \gamma^6 p^{-1}$ the following holds.  In the $b$-biased Maker-Breaker game played on $K_n$, for any fixed strategy of Breaker, if Maker draws a random graph $\Gamma \sim G(n,p)$ then a.a.s.~$\Gamma$ is such that Maker can claim a spanning subgraph $G$ of $\Gamma$ with $\delta(G) \ge (1 - \gamma)np$ and $e_G(N_\Gamma(v)) \ge (1 - \gamma)p^3 n^2 / 2$ for every $v \in V(\Gamma)$.
\end{theorem}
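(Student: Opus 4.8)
The plan is for Maker to first draw the random graph $\Gamma\sim G(n,p)$, keep it private, and then play a \emph{reactive} strategy that only ever claims edges of $\Gamma$ and spends each move shoring up whichever part of $\Gamma$'s local structure Breaker is currently endangering. Step one is to record that $\Gamma$ is \emph{typical} with probability $1-o(1)$: every vertex $v$ has $\deg_\Gamma(v)=(1\pm\gamma/10)pn$, every $v$ has $e_\Gamma(N_\Gamma(v))=(1\pm\gamma/10)p^3n^2/2$, every pair has co-degree $(1\pm o(1))p^2n$, and a handful of further coarse pseudorandom estimates hold. Each of these is a Binomial-type quantity; the sparsest one, $e_\Gamma(N_\Gamma(v))$, has mean of order $p^3n^2\ge 10^{24}\gamma^{-6}n^{1/2}\to\infty$, so Chernoff bounds beat a union bound over the $n$ vertices --- this is exactly the point at which the hypothesis $p\ge 10^8\gamma^{-2}n^{-1/2}$ is used. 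Fix such a $\Gamma$ from now on; it then suffices for Maker to claim $G\subseteq\Gamma$ with $\deg_G(v)\ge(1-\gamma/2)\deg_\Gamma(v)$ and $e_G(N_\Gamma(v))\ge(1-\gamma/2)e_\Gamma(N_\Gamma(v))$ for every $v$.

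The structural heart of the matter is that Breaker's strategy is fixed \emph{before} $\Gamma$ is drawn, so Breaker is blind to $\Gamma$: conditioned on the history of play, any edge Breaker now claims lies in $E(\Gamma)$ with probability $\le(1+o(1))p$ and, for a fixed $v$, lies inside $E_\Gamma(N_\Gamma(v))$ (both endpoints in $N_\Gamma(v)$, and an edge of $\Gamma$) with probability $\le(1+o(1))p^3$. The asymmetric consequence drives everything. Breaker \emph{can} threaten a degree-set quickly: by claiming all $n-1$ edges at $v$ he grabs all $\deg_\Gamma(v)$ of its $\Gamma$-edges, so Maker must be able to rush to any vertex. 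But Breaker \emph{cannot} threaten any neighbourhood-set quickly, since he does not even see $N_\Gamma(v)$, and to blindly collect a $\gamma$-fraction of the $\asymp p^3n^2$ relevant $\Gamma$-edges he would have to claim $\asymp\gamma n^2$ board-edges, far exceeding the $O(b\,|E(\Gamma)|)=O(bpn^2)$ moves he makes in the entire game. So for neighbourhoods Maker needs only steady progress, not an emergency defence.

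Maker's target is, for every $v$, to hold $(1-\gamma/2)\deg_\Gamma(v)$ of the $\Gamma$-edges at $v$ and $(1-\gamma/2)e_\Gamma(N_\Gamma(v))$ of the $\Gamma$-edges inside $N_\Gamma(v)$; by typicality this gives the theorem. Each round she claims an available $\Gamma$-edge belonging to whichever of these $2n$ target sets currently has the smallest safety margin (target minus what is still reachable there), which simultaneously rushes to endangered degree-sets and advances lagging neighbourhood-sets. The point is that one Maker move advances two degree-sets and the $\asymp p^2n$ neighbourhood-sets meeting that edge at once, so the total work needed to reach all $2n$ targets is $(1+o(1))(1-\gamma/2)|E(\Gamma)|$ --- within Maker's $|E(\Gamma)|$ moves, with $\gamma$-slack --- while by the blindness estimate Breaker cannot open a gap faster than Maker closes it. Concretely one runs a potential/supermartingale argument (the biased Maker--Breaker / box-game method in the spirit of Chv\'atal and Erd\H{o}s and of Erd\H{o}s--Selfridge) with a potential like $\Phi=\sum_v \exp\!\big(\lambda(B_v-\kappa M_v)\big)$, $B_v,M_v$ counting $\Gamma$-edges at $v$ held by Breaker and Maker: Breaker's $b$ claims raise $\Phi$ in expectation by at most an $O(bp)$ (degree terms) or $O(bp^3)$ (neighbourhood terms) multiple, while Maker's move at the tightest margin lowers it comparably, and these balance with room to spare once $bp\le 10^{-24}\gamma^6$. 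Azuma's inequality then keeps $\Phi$ bounded, so no margin ever goes negative and all $2n$ targets are met; the precise constant $10^{-24}$ and the powers of $\gamma$ are what it takes to absorb the $o(1)$'s, the Azuma fluctuations, and the $\gamma/2$ budgets.

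The main obstacle is the joint analysis of the $2n$ target sets, and above all the neighbourhood ones: they overlap in a highly correlated way (a single $\Gamma$-edge lies in $\asymp p^2n$ of them), so Breaker's blind claims exert correlated pressure on many of them at once, and the off-the-shelf box-game bound --- which would lose a spurious $\log n$ factor and so fail to reach $b\asymp 1/p$ --- has to be replaced by an argument that genuinely exploits Breaker's obliviousness to $\Gamma$, while keeping the global bookkeeping tight enough that Maker's lone move per round, credited to everything it touches, keeps every margin afloat and never stalls (in particular the board never empties, since $bp\ll1$). A secondary technical point is the probabilistic set-up: couple $\Gamma$ with i.i.d.\ edge-labels and reveal an edge's membership in $\Gamma$ only when a player first claims or consults it, so that the game trajectory up through round $t$ is a function of the revealed labels and $\Phi$ is a genuine supermartingale for this filtration --- this is what lets the single remaining source of randomness, the choice of $\Gamma$, carry the ``a.a.s.''.
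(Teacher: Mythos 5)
First, a point of reference: the paper does not prove Theorem~\ref{thm:maker_rg} at all --- it is quoted from Allen, B\"ottcher, Kohayakawa, Naves and Person~\cite{abknp2017} and used as a black box --- so there is no in-paper proof to compare against, and your proposal has to stand on its own.

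It does not, because of a circularity at its core. Your quantitative engine is the claim that, \emph{conditioned on the history of play}, each edge Breaker claims lies in $E(\Gamma)$ with probability at most $(1+o(1))p$ and inside $E_\Gamma(N_\Gamma(v))$ with probability at most $(1+o(1))p^3$. That claim is only justified by the deferred-revelation coupling you invoke at the very end, and that coupling requires Maker's strategy to depend only on the \emph{revealed} part of $\Gamma$. But the strategy you describe --- claim a $\Gamma$-edge in whichever of the $2n$ target sets has the smallest margin --- needs to know $N_\Gamma(v)$ and $e_\Gamma(N_\Gamma(v))$ for every $v$ from the outset, i.e.\ all of $\Gamma$. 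Once Maker's moves are a function of all of $\Gamma$, so are Breaker's (his fixed strategy reads the history), and no a priori bound on the probability that his edges land in $\Gamma$ follows. The $p^3$ bound is the one that actually breaks: over the game Maker claims $(1-\gamma)np$ visible edges at $v$, so a suitably designed fixed Breaker strategy learns essentially all of $N_\Gamma(v)$ and can then attack pairs inside it, each unclaimed such pair being a $\Gamma$-edge with probability about $p$, not $p^3$. A budget check shows this is not harmless slack: destroying $\gamma p^3n^2/2$ edges of $E_\Gamma(N_\Gamma(v))$ then costs Breaker about $\gamma p^2 n^2/2$ claims, against a total budget of roughly $b\cdot pn^2/2\le 10^{-24}\gamma^6 n^2/2$; near the lower bound $p=10^8\gamma^{-2}n^{-1/2}$ one has $\gamma p^2n^2=10^{16}\gamma^{-3}n\ll \gamma^6 n^2$, so Breaker can afford the attack. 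The heart of the theorem is therefore precisely to design a Maker strategy (in \cite{abknp2017} a randomized one, in which $\Gamma$ is effectively generated by Maker's own random choices during play rather than drawn and inspected in advance) that keeps the unclaimed structure of $N_\Gamma(v)$ hidden for long enough; your sketch asserts the conclusion of that design rather than supplying it. Relatedly, the potential-function step is not carried out: you note yourself that the off-the-shelf Beck/box-game bound loses a $\log n$ factor and ``has to be replaced,'' which names the second obstacle without overcoming it.
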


Using Theorem \ref{thm:maker_rg} in combination with a sparse blow-up lemma from~\cite{abhkp2016}, Allen et al.\ \cite{abknp2017} show that, for some $b =\Omega ((n/\log n)^{1/\Delta})$, these {\em neighbourhood properties} are enough for $G$ to contain all graphs of maximum degree $\Delta$ on at most $n$ vertices.

For which $p$ can we guarantee that the neighbourhood properties given by Theorem \ref{thm:maker_rg} guarantee that every vertex of $G$ is contained in a copy of $K_r$? 
The neighbourhood $N_G(v)$ of a vertex $v$ in $G$ has size roughly $pn$, and the graph induced on $N_G(v)$ is a subgraph of $G(n,p)$ that still contains about $(1-\eps)n^2p^3/2$ edges, i.e.~all but a small proportion of edges of $G(n,p)$ in $N_G(v)$ are also edges of $G$. That is, the subgraph of $G$ induced by $N_G(v)$ has roughly the same distribution as the random graph $G(np,p)$, and for the latter to robustly contain a copy of $K_{r-1}$ it is enough to have $p > C(np)^{-2/r}$ for some constant $C$, which translates to $p>Cn^{-2/(r+2)}$. It turns out that this is the main obstacle for Maker to create a $K_r$-factor.

\section{Preliminaries}

We use standard graph-theoretic notation. All considered graphs are finite and simple. Given a graph $G$, we let $e(G)$ and $v(G)$ denote its number of edges and vertices, respectively. Given a set $X \subseteq V(G)$, let $e_G(X)$ denote the number of edges of $G$ with both endpoints in $X$. Similarly, for disjoint subsets $X, Y \subseteq V(G)$ we let $e_G(X, Y)$ denote the number of edges of $G$ with one endpoint in $X$ and the other in $Y$. Given a vertex $v \in V(G)$, we let $N_G(v)$ denote its neighbourhood, and for a set $X$ let $N_G(X) = \bigcup_{v \in X} N_G(v)$. When $G$ is clear from the context, we omit the subscript. For brevity we also omit floors and ceilings, keeping in mind that all the calculations leave enough margin to accumulate all the rounding errors. 
All asymptotic statements refer to $n$, the number of vertices, tending to $\infty$. 
Following standard asymptotic notation we write in particular, $f\ll g$ when $f/g \to 0$ as $n\to\infty$, and $f\gg g$ if $g\ll f$. 

\subsection{Properties of random graphs}

The following well-known estimates on the likely discrepancy of edges and the concentration of degrees in random graphs follow immediately from Chernoff's inequality and the union bound. 
\begin{lemma} \label{lemma:disc}
Let $p = p(n)$ be such that $n^{-1} \le p \le 0.99$. Then a.a.s.~$\Gamma \sim\Gnp$ satisfies the following properties: 
  \begin{itemize}
    \item For all disjoint subsets $X, Y \subseteq V(\Gamma)$ such that $|X| \le |Y|$ we have
    $$
      e(X, Y) = |X||Y|p \pm O\left( |Y| \sqrt{|X| p \log (n/|Y|)} \right);
    $$

    \item For every subset $X \subseteq V(\Gamma)$ we have
    $$
      e(X) = |X|^2p/2 \pm O\left( |X|\sqrt{|X|p \log (n/|X|)} \right);
    $$

    \item For every vertex $v \in V(\Gamma)$ we have 
    $$
      |N(v)| = np \pm O(\sqrt{np \log n})
    $$
  \end{itemize}
\end{lemma}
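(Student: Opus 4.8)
The plan is to prove each of the three bullet points by a routine application of the Chernoff bound followed by a union bound, exactly as the statement of the lemma suggests. For the third bullet, fix a vertex $v$; then $|N(v)| \sim \Bin\big(n-1, p\big)$, which has mean $(n-1)p = np \pm O(p)$, and $p = o(\sqrt{np \log n})$ since $p \ge n^{-1}$ forces $np \ge 1$. A standard Chernoff estimate gives $\Prob\big[\, \big||N(v)| - np\big| > t\sqrt{np\log n}\,\big] \le 2\exp\big(-\Omega(t^2 \log n)\big) = n^{-\Omega(t^2)}$, so choosing the hidden constant large enough and taking a union bound over the $n$ vertices makes the failure probability $o(1)$. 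I would carry out this bullet first as a warm-up, since it involves no set-size bookkeeping.

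For the second bullet, fix a subset $X \subseteq V(\Gamma)$ of size $s$. Then $e(X) \sim \Bin\big(\binom{s}{2}, p\big)$ with mean $\binom{s}{2}p = s^2 p/2 \pm O(sp)$, and the error term $O(sp)$ is absorbed into $O\big(s\sqrt{sp\log(n/s)}\big)$ provided $sp \ll sp\log(n/s)$ — which needs $\log(n/s) \gg 1$; the boundary case $s$ close to $n$ has to be handled by noting that then the claimed error term is $\Theta(s^{3/2}\sqrt{p})$, still comfortably larger than $O(sp)$ because $sp \le np \le n$ and $s^{3/2}\sqrt{p} \ge s \sqrt{sp} \gtrsim s$ when $sp \gtrsim 1$; I will set this up so the constant in the $O(\cdot)$ swallows it. The Chernoff bound yields, for deviation $t\cdot s\sqrt{sp\log(n/s)}$ from the mean,
$$
  \Prob\Big[\, \big|e(X) - s^2p/2\big| > t\, s\sqrt{sp\log(n/s)}\,\Big] \le 2\exp\!\big(-\Omega(t^2 s \log(n/s))\big) = \binom{n}{s}^{-\Omega(t^2)},
$$
using $\binom{n}{s} \le (en/s)^s$, and a union bound over all $\binom{n}{s}$ sets of size $s$ and then over $s$ finishes this bullet, again after enlarging the hidden constant.

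The first bullet is the one I expect to be the main obstacle, purely on account of the two-parameter bookkeeping. Fix disjoint $X, Y$ with $|X| = s \le |Y| = t$; then $e(X,Y) \sim \Bin\big(st, p\big)$ with mean $stp$, and the target error is $O\big(t\sqrt{sp\log(n/t)}\big)$, i.e.\ relative deviation parameter $\lambda := \sqrt{\tfrac{p\log(n/t)}{s p^2}} \cdot$ (constant) on the Poisson scale. Chernoff gives a bound of the form $\exp\big(-\Omega(st p \cdot \lambda^2)\big)$ when $\lambda \le 1$ and $\exp\big(-\Omega(st p \lambda \log \lambda)\big)$ when $\lambda$ is large; plugging in, the exponent is $\Omega\big(t\log(n/t)\big)$ in the former regime, which must beat $\log\!\big[\binom{n}{s}\binom{n}{t}\big] = O\big(t\log(n/t) + s\log(n/s)\big) = O\big(t\log(n/t)\big)$ since $s \le t$ — so the union-bound budget is exactly matched and the hidden constant has to be chosen accordingly. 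The regime $\lambda > 1$ (which is where $s$ is very small, say $sp^2 \ll p\log(n/t)$) needs the multiplicative-Chernoff tail $\Prob[\Bin(m,p) > k] \le (em p/k)^k$ with $k = t\sqrt{sp\log(n/t)}$; one checks the exponent is then $\gtrsim k \gtrsim t\sqrt{\log(n/t)} \gg t\log(n/t)$ is \emph{false} in general, so instead one observes that in this regime the deviation $t\sqrt{sp\log(n/t)}$ exceeds the mean $stp$, and the upper tail $\Prob[e(X,Y) \ge 2\Exp] \le e^{-\Omega(\Exp \cdot \lambda)}$ plus a trivial handling of the lower tail (which cannot go below $0$) suffices; I will split into these cases explicitly. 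After enlarging the constants in all three $O(\cdot)$'s and intersecting the three a.a.s.\ events, the lemma follows.
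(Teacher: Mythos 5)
The paper offers no proof of this lemma at all --- it is dispatched with the single sentence that the estimates ``follow immediately from Chernoff's inequality and the union bound'' --- so your Chernoff-plus-union-bound plan is exactly the intended argument, and your bookkeeping in the main regime is the right one: the exponent $\Omega(t^2\,|Y|\log(n/|Y|))$ against a union-bound cost of $O(|Y|\log(n/|Y|))$ (using $|X|\le|Y|$), closed by enlarging the implied constant. The same goes for the second and third bullets when $|X|p$ and $np$ are large compared to the relevant logarithm.

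The gap is precisely where you suspected it, and your patch does not close it. In the regime $\lambda>1$, i.e.\ $|X|p\ll\log(n/|Y|)$, the bound $\Prob[e(X,Y)\ge 2\Exp[e(X,Y)]]\le e^{-\Omega(\Exp[e(X,Y)]\cdot\lambda)}$ has exponent of order $|Y|\sqrt{|X|p\log(n/|Y|)}$, which is $o(|Y|\log(n/|Y|))$ exactly when $|X|p\ll\log(n/|Y|)$ --- so it never beats the union bound in the regime where you invoke it. Moreover, no repair is possible, because the statement as literally quantified is false at the bottom of the range of $p$: for $p=n^{-1}$ the graph $\Gnp$ a.a.s.\ contains a matching of size $\sqrt n$; letting $X$ and $Y$ be its two sides gives $e(X,Y)\ge\sqrt n$, whereas the first bullet would allow only $1+O(n^{1/4}\sqrt{\log n})$. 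Likewise the third bullet fails at $p=n^{-1}$, since the maximum degree is then a.a.s.\ $\Theta(\log n/\log\log n)\gg\sqrt{\log n}$. The lemma is to be read with the standard implicit restriction to sets for which the error term is meaningful, say $|X|p\ge\log(n/|X|)$ (equivalently $\lambda\le1$); this is harmless because every application in the paper has $|X|,|Y|\ge\log n/p$ and $p\ge n^{-2/(r+2)}$, where your main-case computation goes through. You should either state that restriction explicitly and drop the $\lambda>1$ case, or prove the weaker (and always true) form $|e(X,Y)-|X||Y|p|=O\bigl(|Y|\sqrt{|X|p\log(n/|Y|)}+|Y|\log(n/|Y|)\bigr)$, whose second term covers the sparse regime via the multiplicative tail $\Prob[\mathrm{Bin}(m,p)\ge k]\le(emp/k)^k$.
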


In order to state the second result we first need some preparation. Given a graph $G$ and $\eps \in [0,1]$, we say that a pair of disjoint subsets $V_1, V_2 \subseteq V(G)$ forms an \emph{$(\eps)$-regular pair} if for $i=1,2$ and for every $V_i' \subseteq V_i$ of size $|V_i'| \ge \eps |V_i|$ we have
$$
  \left| e(V_1', V_2') - |V_1'||V_2'|p \right| \le \eps |V_1'||V_2'| p,
$$
where $p = e(V_1, V_2)/|V_1||V_2|$. 
Note that Lemma \ref{lemma:disc} implies that a.a.s.~every pair of subsets of $\Gnp$ of size, say, at least $\log n / p$, forms an $(\eps)$-regular pair for every fixed $\eps>0$. 

Let $H$ be a graph with vertex set $\{1, \ldots, k\}$. We denote by $\cG(H, n, m, \eps)$ the collection of all graphs $G$ obtained in the following way: (i) The vertex set of $G$ is a disjoint union $V_1 \cup \ldots \cup V_k$ of sets of size $n$; (ii) For each edge $ij \in E(H)$, we add to $G$ an $(\eps)$-regular bipartite graph with $m$ edges between the pair $(V_i, V_j)$. Let $\cG^*(H, n, m, \eps)$ denote the family of all graphs $G \in \cG(H, n, m, \eps)$ which do not contain a copy of $H$. The following result, originally conjectured by Kohayakawa, \L uczak, and R\"odl \cite{kohayakawa1997onk}, was proven by Balogh, Morris, and Samotij \cite{balogh2015independent} and, independently, Saxton and Thomason \cite{saxton2015hypergraph}.

\begin{theorem} \label{thm:KLR}
  Let $H$ be a fixed graph and $\beta > 0$. Then there exist $C, \eps > 0$ and a positive integer $n_0$ such that 
  $$
    \left| \cG^*(H, n, m, \eps) \right| \le \beta^m \binom{n^2}{m}^{e(H)}
  $$
  for every $n \ge n_0$ and every $m \ge Cn^{2 - 1/m_2(H)}$, where
  $$
    m_2(H) = \max\left\{ \frac{e(H') - 1}{v(H') - 2} \colon H' \subset H, \; v(H) \ge 3  \right\}.
  $$
\end{theorem}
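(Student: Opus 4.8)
The plan is to prove this by the \emph{hypergraph container method}, following Balogh--Morris--Samotij (and the Saxton--Thomason proof uses the same machinery). \emph{Set-up:} fix disjoint sets $V_1,\dots,V_{v(H)}$ of size $n$ and view every member of $\cG(H,n,m,\eps)$ as a subgraph of the complete partite host graph whose $e(H)\,n^2$ edges split into blocks $B_{ij}\cong[n]\times[n]$, one per edge $ij\in E(H)$. Let $\calH$ be the $e(H)$-uniform hypergraph with vertex set $\bigcup_{ij}B_{ij}$ whose hyperedges are the edge sets of the $\Theta(n^{v(H)})$ copies of $H$ in the host, each using exactly one edge of each block. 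Then a host graph is $H$-free precisely when it is an independent set of $\calH$, so the task becomes: count the independent sets of $\calH$ that have $m$ edges in every block and are $(\eps)$-regular there. The scale $n^{2-1/m_2(H)}$ appears because it is the order of $m$ at which a random host graph with $m$ edges per block stops being independent, and its being $m=\Theta(|E(\calH)|\,v(\calH)^{-1}\,\tau^{-1})$ for $\tau\asymp n^{-1/m_2(H)}$ is exactly what drives the container machinery.

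\emph{Balanced supersaturation (the crux).} The heart of the argument is to show that $H$ supersaturates in a \emph{well-spread} way inside the host: there is $\delta>0$ such that any host graph $F$ with at least $\delta^{-1}n^{2-1/m_2(H)}$ edges in each block contains a collection $\calF$ of copies of $H$ that is large --- roughly $\prod_{ij}|E(F)\cap B_{ij}|\,/\,n^{2e(H)-v(H)}$ of them, i.e.\ of the ``expected'' order --- and such that for every $1\le j\le e(H)$ no set of $j$ host edges lies in more than a $\delta^{-1}$ factor above its ``random'' share of members of $\calF$. I expect this to be the main obstacle. The copies of $H$ must be produced (e.g.\ by a carefully analysed random greedy process) so that no small cluster of host edges is overloaded, and the bookkeeping works precisely because the exponent $1/m_2(H)$ is optimal over all subgraphs $H'\subseteq H$: a denser subgraph would create a local concentration of copies and break the spread. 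This balanced strengthening of the ordinary supersaturation count is the ingredient earlier approaches lacked; in the Saxton--Thomason formulation it is packaged as a bound on the co-degrees $\Delta_j(\calH)$.

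\emph{Containers and the count.} The well-spread property is exactly the hypothesis the hypergraph container lemma needs, applied to $\calH$ with $\tau$ of order $n^{-1/m_2(H)}$. Running (and iterating) the container lemma yields a family $\mathcal{C}$ of containers such that every $H$-free host graph lies inside some $C\in\mathcal{C}$, with $|\mathcal{C}|$ small --- of the form $e^{\beta_1 m}$ once $m\ge Cn^{2-1/m_2(H)}$, where the threshold constant $C=C(\beta,H)$ is chosen large enough to make $\beta_1$ as small as we wish --- and with each $C$ ``almost $H$-free''. The final step is a counting argument: inside any almost-$H$-free container the number of $(\eps)$-regular host graphs with $m$ edges in each block is at most $\beta_0^{\,e(H)m}\binom{n^2}{m}^{e(H)}$ with $\beta_0=\beta_0(\beta,H)$ arbitrarily small --- here $(\eps)$-regularity is essential, since it excludes the dense but copy-poor ``concentrated'' configurations that an almost-$H$-free container could otherwise harbour (again via the contrapositive of balanced supersaturation), and this is what pins down the admissible value of $\eps$. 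Summing over $\mathcal{C}$ gives
$$
\bigl|\cG^*(H,n,m,\eps)\bigr|\ \le\ |\mathcal{C}|\cdot\beta_0^{\,e(H)m}\binom{n^2}{m}^{e(H)}\ \le\ \bigl(e^{\beta_1}\beta_0^{\,e(H)}\bigr)^{m}\binom{n^2}{m}^{e(H)},
$$
so picking $\beta_0$ small enough that $\beta_0^{\,e(H)}\le\beta/2$ and then $C$ large enough that $\beta_1\le\log 2$ yields the bound $\beta^m\binom{n^2}{m}^{e(H)}$, with $n_0$ chosen so that all the asymptotic estimates used along the way are valid.
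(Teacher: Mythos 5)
This statement is not proved in the paper at all: it is the Kohayakawa--\L uczak--R\"odl conjecture, which the authors quote as an external black box and attribute to Balogh--Morris--Samotij and to Saxton--Thomason. Your outline is a faithful high-level description of exactly those proofs (the hypergraph of copies of $H$, balanced supersaturation giving the co-degree bounds at scale $\tau\asymp n^{-1/m_2(H)}$, the container lemma, and the final count inside almost-$H$-free containers using the $(\eps)$-regularity to rule out concentrated configurations), so there is nothing to compare against within the paper itself. The only caveat worth recording is that your two load-bearing steps --- the balanced supersaturation lemma and the container lemma --- are themselves only asserted, not proved; since the paper likewise treats the whole theorem as a citation, this is an appropriate level of detail, but it means your text is a proof outline rather than a proof.
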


Theorem \ref{thm:KLR} states that a random element from $\cG(H, n, m, \eps)$ is highly unlikely to be $H$-free. Even more, it implies that the random graph $G(n,p)$ is unlikely to contain any graph from $\cG^*(H, \tilde n, m, \eps)$ for appropriate $\tilde n$ and $p$.  This is made precise in the following lemma.

\begin{lemma} \label{lemma:KLR_rg}
Let $H$ be a graph such that $m_2(H) \ge 2$. Then there exist $\eps, B > 0$ such that for $n^{-1/m_2(H)} \le p = p(n) \le \ln^{-2} n$, the graph $\Gamma \sim \Gnp$ a.a.s.~has the property that, for every $\tilde n \ge Bp^{-m_2(H)}$, every $m \ge \tilde n^2 p / 2$ and 
every graph $G'\in\cG(H, \tilde n, m, \eps)$, if $G' \subseteq \Gamma$ then $G'$ contains a copy of $H$. 
\end{lemma}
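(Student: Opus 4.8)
The plan is to derive the statement from Theorem \ref{thm:KLR} via a first-moment (union bound) argument over the relevant family of ``bad'' configurations. Fix $H$ with $m_2(H) \ge 2$, set $\beta > 0$ to be chosen small at the end, and let $C, \eps$ be the constants supplied by Theorem \ref{thm:KLR} for this $H$ and this $\beta$. I will show that the constant $B$ can be taken to be $C$ (up to a harmless factor), so that the hypothesis $\tilde n \ge B p^{-m_2(H)}$ together with $m \ge \tilde n^2 p / 2$ forces $m \ge C \tilde n^{2 - 1/m_2(H)}$, which is exactly the range in which Theorem \ref{thm:KLR} applies. Indeed $m \ge \tilde n^2 p / 2 = (\tilde n^{2 - 1/m_2(H)} /2) \cdot \tilde n^{1/m_2(H)} p \ge (\tilde n^{2-1/m_2(H)}/2)\cdot B^{1/m_2(H)}$, so choosing $B = (2C)^{m_2(H)}$ does it. Crucially, the containment $G' \subseteq \Gamma$ and $G' \in \cG(H,\tilde n, m, \eps)$ means each of the $e(H)$ bipartite graphs of $G'$ is an $(\eps)$-regular graph with exactly $m$ edges sitting inside the corresponding bipartite subgraph of $\Gamma$.

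Next I would set up the union bound. Bad event: there exist $\tilde n \ge B p^{-m_2(H)}$, a choice of $e(H)$-tuple of disjoint vertex sets $V_1, \dots, V_k$ of size $\tilde n$ in $[n]$, an integer $m \ge \tilde n^2 p/2$, and a graph $G' \in \cG^*(H, \tilde n, m, \eps)$ with $G' \subseteq \Gamma$. For fixed $\tilde n$, $m$ and fixed vertex sets, the probability that a \emph{specific} $G'$ with $m e(H)$ edges is a subgraph of $\Gamma$ is $p^{m e(H)}$; by Theorem \ref{thm:KLR} the number of such $H$-free $G'$ is at most $\beta^m \binom{\tilde n^2}{m}^{e(H)}$. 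Using $\binom{\tilde n^2}{m} \le (e \tilde n^2 / m)^m$ and $m \ge \tilde n^2 p / 2$ gives $\binom{\tilde n^2}{m}^{e(H)} p^{m e(H)} \le (e \tilde n^2 p / m)^{m e(H)} \le (2e)^{m e(H)}$. Hence for fixed $\tilde n, m$ and fixed vertex sets the failure probability is at most $(\beta (2e)^{e(H)})^m$; choosing $\beta < (2e)^{-e(H)}$ makes the base of this exponential strictly less than $1$. Finally I sum over the at most $n^{k \tilde n}$ choices of vertex sets, over the at most $\tilde n^2$ choices of $m$, and over the at most $n$ choices of $\tilde n$. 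The dominant term is $n^{k \tilde n} \cdot (\text{base})^{m}$ with $m \ge \tilde n^2 p / 2$, i.e.\ a bound of the form $\exp\!\big( k \tilde n \ln n - c \tilde n^2 p \big)$ for some constant $c > 0$. Since $\tilde n \ge B p^{-m_2(H)} \ge B p^{-2}$ (using $m_2(H) \ge 2$), we get $\tilde n p \ge B p^{-1} \ge B \ln^{2} n \gg \ln n$ using the upper bound $p \le \ln^{-2} n$; therefore $\tilde n^2 p \gg \tilde n \ln n \ge k \tilde n \ln n$ once $B$ is large enough, so each summand is super-polynomially small and, after summing the geometric-type series over $\tilde n$ and the polynomial factor in $m$, the total probability tends to $0$.

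The one genuine subtlety — and the step I expect to need the most care — is making sure the union bound is taken over a \emph{finite, explicitly countable} family rather than over all subgraphs $G'$ of $\Gamma$ directly: the clean way is to fix the combinatorial data $(\tilde n, V_1, \dots, V_k, m)$ first, apply Theorem \ref{thm:KLR} to count $H$-free $(\eps)$-regular configurations with that data, multiply by the embedding probability $p^{m e(H)}$, and only then sum. One must also check that the regularity parameter $\eps$ returned by Theorem \ref{thm:KLR} (which depends only on $H$ and $\beta$, hence only on $H$) is the same $\eps$ we advertise in the lemma — this is automatic since $\beta$ is chosen depending only on $H$. The remaining ingredients are the entropy estimate $\binom{\tilde n^2}{m} \le (e\tilde n^2/m)^m$, the inequality $m \ge \tilde n^2 p / 2$ to cancel the binomial against $p^{m}$, and the inequality $p^{-1} \ge \ln^2 n$ to beat the $n^{k\tilde n}$ factor coming from the choice of vertex sets; all of these are routine. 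Note that the upper bound $p \le \ln^{-2} n$ is used exactly here and nowhere else, and the lower bound $p \ge n^{-1/m_2(H)}$ guarantees $B p^{-m_2(H)} \le B n$, so that $\tilde n$ ranges over a nonempty set of integers at most $n$.
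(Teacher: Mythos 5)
Your proposal is correct and follows essentially the same route as the paper: a first-moment/union-bound over $\cG^*(H,\tilde n,m,\eps)$, the same choice $B=(2C)^{m_2(H)}$ to put $m$ into the range of Theorem \ref{thm:KLR}, the same cancellation of $\binom{\tilde n^2}{m}^{e(H)}$ against $p^{m e(H)}$ via the entropy bound and $m\ge \tilde n^2 p/2$, and the same use of $m_2(H)\ge 2$ together with $p\le \ln^{-2}n$ to beat the $n^{\Theta(k\tilde n)}$ placement factor. The only differences are cosmetic (the paper fixes $\beta=(2e^2)^{-e(H)}$ explicitly and bounds the placements by $\binom{n}{\tilde n k}(\tilde n k)!$).
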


\begin{proof}
Let $\eps$, $C > 0$ be as given by Theorem~\ref{thm:KLR} for $H$ and $\beta = (1/(2e^2))^{e(H)}$, and set $B = (2C)^{m_2(H)}$. Let $\Gamma\sim\Gnp$. In order to prove the lemma it suffices to show that $\mu$ vanishes, where $\mu$ is the expected number of subgraphs of $\Gamma$ that are isomorphic to an element in $\cG^*(H, \tilde n, m, \eps)$. 
Note that  
 \begin{align*}
    \mu &=  \sum_{\tilde n \ge B p^{-m_2(H)}} \sum_{m \ge \tilde n^2 p/2} \sum_{G\in \cG^*} \Pr(G\se \Gamma),
%    |\cG^*(H, \tilde n, m, \eps)| p^{e(H) m},
\end{align*}     
where $\cG^* =\cG^*(H, \tilde n, m, \eps)$. 
Now 
$$
  \Pr(G\se \Gamma)\le \binom{n}{\tilde n k} (\tilde n k)! p^{e(H) m}
$$ 
where $k=v(H)$ for brevity. 
Furthermore, $m \ge C \tilde n^{2 - 1/m_2(H)}$ follows from $m \ge \tilde n^2 p/2$  and $\tilde n \ge B p^{-m_2(H)}$. Thus we can apply the bound on $|\cG^*(H, \tilde n, m, \eps)|$ given by Theorem \ref{thm:KLR}. 
We therefore have that 
  \begin{align}  \label{aux332}  
    \mu &\le \sum_{\tilde n \ge B p^{-m_2(H)}} \sum_{m \ge \tilde n^2 p/2}
    \beta^m \binom{\tilde n^2}{m}^{e(H)} \; \binom{n}{\tilde n k } (\tilde n k)! p^{e(H) m} \nonumber\\
    &\le \sum_{\tilde n \ge B p^{-m_2(H)}}  \sum_{m \ge \tilde n^2 p/2} \binom{n}{\tilde nk} (\tilde n k)! 
    \beta^m \left( \frac{{\tilde n}^2 e}{m}  \right)^{e(H) m} p^{e(H) m} \nonumber\\
    &\le \sum_{\tilde n \ge B p^{-m_2(H)}} \sum_{m \ge \tilde n^2 p/2}  n^{2 \tilde nk}
    \left( \beta^{1/e(H)} 2 e  \right)^{e(H) m} \nonumber\\
    &\le  \sum_{\tilde n \ge Bp^{-m_2(H)}} \sum_{m \ge \tilde n^2 p / 2}  
    \exp\left(2 k\tilde n \ln n - m \right),
%       = o(1),
  \end{align}
  where the third inequality follows from $m\ge \tilde n^2p/2$, and the last inequality follows from our choice of $\beta$ and $e(H)\ge 1$ since $m_2(H)\ge 2$. Now, for sufficiently large $n$, 
  $$
    2 k \tilde n \ln n - m \le \tilde n (2k \ln n - \tilde n p /2) \le \tilde n (2k \ln n - Bp^{-1}/2) < - \ln^2 n,
  $$
from the lower bound on $m$, the lower bound on $\tilde n$ and $m_2(H)\ge 2$, and from the upper bound on $p$, respectively. 
Thus the final expression in~\eqref{aux332} tends to 0 as $n\to \infty$. The assertion of the lemma follows from Markov's Inequality.
\end{proof}

We remark that the condition $m_2(H) \ge 2$ is purely for convenience, and in fact $m_2(H) > 1$ would work as well (having an impact only on the upper bound on $p$). It should be noted that the previous lemma could also be derived from a result of Conlon, Gowers, Samotij, and Schacht \cite{conlon2014klr}. Finally, to apply the previous result in our proof we make use of the following lemma (see, e.g., \cite[Lemma 4.3]{gerke_steger_2005}).

\begin{lemma} \label{lemma:exact_m_edges}
  Given a positive $\eps < 1/6$, there exists a constant $C$ such that any $(\eps)$-regular graph $B = (V_1 \cup V_2, E)$ contains a $(2\eps)$-regular subgraph $B = (V_1 \cup V_2,E')$ with $|E'| = m$ edges for all $m$ satisfying $C |V(B)| \le m \le |E(B)|$.
\end{lemma}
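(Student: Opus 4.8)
The plan is to realise $E'$ as a slightly corrected random subset of $E(B)$ and, via the \emph{probabilistic method}, to show that some $(2\eps)$-regular choice with exactly $m$ edges exists. Write $n_0 := |V(B)|$, $N := |E(B)|$, and let $p := N/(|V_1||V_2|)$ be the density of $B$. We may assume $m < N$: when $m = N$ we take $E' = E$, and since any $V_i'$ with $|V_i'| \ge 2\eps|V_i|$ also satisfies $|V_i'| \ge \eps|V_i|$, the $(\eps)$-regularity of $B$ gives $(2\eps)$-regularity directly. So set $q := m/N \in (0,1)$ and let $B_q = (V_1 \cup V_2, E_q)$ be obtained from $B$ by retaining each edge independently with probability $q$; then $\Exp|E_q| = m$ and $\Exp\, e_{B_q}(V_1', V_2') = q\, e_B(V_1', V_2')$ for all $V_1' \se V_1$, $V_2' \se V_2$.

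Now fix a pair $V_1' \se V_1$, $V_2' \se V_2$ with $|V_i'| \ge 2\eps|V_i|$ for $i = 1, 2$. Since $B$ is $(\eps)$-regular and $|V_i'| \ge \eps|V_i|$, we have $e_B(V_1', V_2') = (1 \pm \eps)\, p|V_1'||V_2'|$, so $\Exp\, e_{B_q}(V_1', V_2') = (1 \pm \eps)\, pq|V_1'||V_2'|$, which is at least $(1-\eps)(2\eps)^2 m$ because $|V_i'| \ge 2\eps|V_i|$ and $pq|V_1||V_2| = m$. A Chernoff bound then yields $e_{B_q}(V_1', V_2') = (1 \pm \eps/2)\,\Exp\, e_{B_q}(V_1', V_2')$ with probability at least $1 - 2\exp(-c\eps^4 m)$ for an absolute constant $c > 0$. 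There are at most $2^{n_0}$ such pairs, so for a suitably large $C = C(\eps)$ and every $m \ge C n_0$ (which in particular makes $m$ large in absolute terms, since $n_0 \ge 2$) a union bound — together with the Chernoff estimate that $\big||E_q| - m\big| \le C_0\sqrt m$ for a large absolute constant $C_0$, an event failing with probability at most $1/4$ — shows that with probability at least $1/2$ the graph $B_q$ simultaneously satisfies $e_{B_q}(V_1', V_2') = (1 \pm \eps/2)(1 \pm \eps)\, pq|V_1'||V_2'|$ for all such pairs and $\big||E_q| - m\big| \le C_0\sqrt m$. Fix such an outcome.

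To finish, correct the edge count: if $|E_q| > m$ delete $|E_q| - m \le C_0\sqrt m$ arbitrary edges of $B_q$, and if $|E_q| < m$ add $m - |E_q| \le C_0\sqrt m$ arbitrary edges from $E \setminus E_q$; call the result $E'$. Then $|E'| = m$ and the density of $(V_1 \cup V_2, E')$ is $p' := m/(|V_1||V_2|) = pq$. For a pair with $|V_i'| \ge 2\eps|V_i|$ we have $p'|V_1'||V_2'| \ge (2\eps)^2 m$, while the correction changes $e(V_1', V_2')$ by at most $C_0\sqrt m \le \tfrac{\eps}{4}(2\eps)^2 m \le \tfrac{\eps}{4}\, p'|V_1'||V_2'|$ (again using that $m$ is large). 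Combining with the previous paragraph, $e_{E'}(V_1', V_2') = \big[(1 \pm \eps/2)(1 \pm \eps) \pm \tfrac{\eps}{4}\big]\, p'|V_1'||V_2'| = (1 \pm 2\eps)\, p'|V_1'||V_2'|$, the last equality using $\eps < 1/6$. Hence $(V_1 \cup V_2, E')$ is $(2\eps)$-regular and has exactly $m$ edges.

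I expect the main obstacle to be the union bound over the $\le 2^{n_0}$ subset pairs: the single-pair Chernoff failure probability is controlled by the expected edge count of the smallest admissible pair, which is of order $\eps^2 m$, and to beat $2^{-n_0}$ one needs exactly $m = \Omega(n_0) = \Omega(|V(B)|)$ — this is precisely where the hypothesis $m \ge C|V(B)|$ is used. By contrast, the final adjustment to exactly $m$ edges is routine, since the $O(\sqrt m)$ edges moved are negligible against the $\Omega(\eps^2 m)$ edges carried by every relevant pair.
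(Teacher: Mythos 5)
Your argument is correct: the random sparsification with retention probability $q=m/N$, the Chernoff-plus-union-bound over the at most $2^{|V(B)|}$ relevant subset pairs (which is exactly where $m\ge C|V(B)|$ enters), and the final $O(\sqrt m)$ correction all check out, and the arithmetic $(1\pm\eps/2)(1\pm\eps)\pm\eps/4=1\pm2\eps$ holds comfortably for $\eps<1/6$. Note that the paper does not prove this lemma at all but cites it from Gerke and Steger (Lemma 4.3 there); your proof is essentially the standard probabilistic argument behind that cited result, so there is nothing to reconcile.
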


\subsection{System of disjoint hyperedges}

Given a hypergraph $H$, we denote by $\tau(H)$ the size of a smallest \emph{vertex cover} of $H$, that is the size of a smallest subset $X \subseteq V(H)$ that intersects all the edges of $H$. Note that if $H$ and $H'$ are hypergraphs on the same vertex set then $\tau(H \cup H') \le \tau(H) + \tau (H')$. We make use of the following generalisation of Hall's theorem due to Haxell~\cite{haxell1995condition}. 

\begin{theorem} \label{thm:haxell}
  Let $H_1, \ldots, H_t$ be a family of $r$-uniform hypergraphs on the same vertex set. If for every $I \subseteq [t]$ we have $\tau(\bigcup_{i \in I} H_i) \ge 2 r |I|$ then one can choose a hyperedge $h_i \in E(H_i)$ for each $i \in [t]$ such that $h_i \cap h_j = \emptyset$ for distinct $i, j \in [t]$.
\end{theorem}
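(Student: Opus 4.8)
\emph{Proof proposal.} This is Haxell's theorem~\cite{haxell1995condition}; we sketch its proof, which is an augmenting-path (``alternating tree'') argument generalising the proof of Hall's theorem. The plan is to induct on $t$. For $t=1$ the statement holds because a hypergraph with no edges has vertex cover number $0$, so $\tau(H_1)\ge 2r>0$ forces $E(H_1)\neq\emptyset$ and any single hyperedge of $H_1$ is the required system. For the inductive step we assume the theorem for families of size $t-1$; since the hypothesis restricted to $I\subseteq[t-1]$ is part of the given hypothesis, the induction hypothesis applied to $H_1,\dots,H_{t-1}$ produces pairwise disjoint hyperedges $h_i\in E(H_i)$, $i\in[t-1]$. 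It remains to accommodate $H_t$, possibly after rerouting some of the $h_i$.

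Suppose for contradiction that no system of $t$ pairwise disjoint representatives exists. We run a search that maintains a partial system $\{\phi(i)\}_{i\in[t]\setminus\{d\}}$ of pairwise disjoint hyperedges, $\phi(i)\in E(H_i)$, with a single \emph{deficient} index $d$ (initially $d=t$ and $\phi(i)=h_i$), a growing set $S\subseteq[t]\setminus\{d\}$ of \emph{scanned} indices, and for each $i\in S$ a recorded hyperedge $g_i\in E(H_i)$. One step examines the deficient index $d$ and the ``unaccounted'' part $U:=\bigcup_{i\notin S\cup\{d\}}\phi(i)$ of the current system. If some $e\in E(H_d)$ is disjoint from $U$, a short case analysis either completes the system (a contradiction) or lets us reroute, turning a scanned index back into the deficient one and shrinking $S$. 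Otherwise every $e\in E(H_d)$ meets $U$; we fix such an $e$, note it meets $\phi(i)$ for some $i\notin S\cup\{d\}$, and update by recording $g_d:=e$ (reassigning $e$ to $d$), evicting $\phi(i)$, making $i$ the new deficient index and adding the old $d$ to $S$. Throughout one maintains the invariant that the vertex set $T$ obtained by collecting, for each $a\in S$, the at most $r$ vertices of $g_a$ together with the at most $r$ vertices of the $\phi$-edge that $a$ was evicted from, has size at most $2r|S|$ and covers every hyperedge of every $H_i$ with $i\in S\cup\{d\}$.

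Since the search strictly increases $|S|$ (with ties broken by, say, $|T|$) and these quantities are bounded, it terminates. Termination via completing the system gives the desired contradiction. Otherwise it terminates with the deficient index $d$ having all of $E(H_d)$ already covered, and then $I:=S\cup\{d\}$ contradicts the hypothesis: by the invariant $T$ is a vertex cover of $\bigcup_{i\in I}H_i$ of size $|T|\le 2r|S|=2r(|I|-1)<2r|I|\le\tau\bigl(\bigcup_{i\in I}H_i\bigr)$. This completes the induction, and hence the proof.

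The delicate point — and the main obstacle — is the precise design of the invariant and of the ``reroute'' step so that (a) the search provably terminates, and (b) on a non-completing termination the accumulated set $T$ really is a vertex cover of $\bigcup_{i\in I}H_i$. This is exactly where the hypothesis needs the factor $2$ (so that the threshold is $2r|I|$ rather than $r|I|$): each scanned index must be charged both for the edge $g_i$ it currently blocks with and for the edge it was evicted from. We note there is also a slicker proof via the topological connectivity of independence complexes (following Aharoni, Berger and Ziv, and Meshulam), but it requires nerve/Hall-type topological machinery not needed elsewhere in this paper.
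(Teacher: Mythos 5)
The paper does not prove this statement: Theorem~\ref{thm:haxell} is imported from Haxell~\cite{haxell1995condition} (in a deliberately weakened form, as the remark after it notes), so the citation in your first sentence is all that is actually required here, and your instinct that the alternative topological route is overkill for this paper is right. What you then offer is a sketch of Haxell's combinatorial argument, and while it has the correct overall shape --- an augmenting/alternating structure in which each ``scanned'' index is charged for two $r$-sets, which is exactly why the threshold $2r|I|$ (rather than $r|I|$) suffices --- it is not a proof as written, and the gaps are precisely at the load-bearing points.

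Concretely: (i) the termination argument is internally inconsistent --- you describe a reroute step that removes an index from $S$, and in the next paragraph assert that ``the search strictly increases $|S|$''; a genuine potential function (or a different organisation of the search) is needed, and none is supplied. (ii) The covering invariant is the entire content of the theorem and is asserted rather than verified. In particular, the rule you use to move $d$ into $S$ is that every $e\in E(H_d)$ meets $U=\bigcup_{i\notin S\cup\{d\}}\phi(i)$, whereas the set you ultimately exhibit as a cover is $T$, built from the recorded edges $g_a$ and the evicted $\phi$-edges for $a\in S$; these are different sets, and it is not explained why $T$ covers $E(H_d)$ (or $E(H_i)$ for earlier scanned $i$) at a non-completing termination. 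Haxell's actual argument resolves this by working with a tree of alternating sequences rather than a single linear scan, precisely so that the union of the chosen edges can be shown to cover every hypergraph indexed by a reachable index. You flag this yourself as ``the delicate point,'' which is honest, but it means the proposal records the strategy of the proof without carrying it out; for the purposes of this paper the clean move is simply to cite \cite{haxell1995condition} and stop.
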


The theorem from \cite{haxell1995condition} gives a slightly better bound than $2 r |I|$, however for our purposes this is sufficient. 

\section{Maker's strategy} \label{sec:Maker}

Our proof strategy is to show that Maker can build a graph which has certain properties and then show that these properties imply the existence of a $K_r$-factor. The properties we need are summarised in the following definition.

\begin{definition}
  Given $\alpha, \beta, p \in [0,1]$ and $r \in \mathbb{N}$, we say that a graph $G$ with $n$ vertices is \emph{$(\alpha, \beta, p, r)$-neat} if it has the following properties:
  \begin{enumerate}[label={(P\arabic*)}]
    \item \label{prop:expand}
    For every $v \in V(G)$ we have $|N_G(v)| \ge np/2$ and for all disjoint $X, Y \subseteq V(G)$ of size $|X| \ge \log n/p$ and $|Y| \ge \alpha n$ there exists a vertex $v \in X$ with at least $|Y|p/2$ neighbours in $Y$; 

    \item \label{prop:in_nbr}
    For every $v\in V(G)$ and every subset $X \subseteq N_G(v)$ of size $|X| \ge \alpha np$ the induced subgraph $G[X]$ contains a copy of $K_{r-1}$;

    \item \label{prop:chain}
    For all disjoint subsets $V_1, \ldots, V_{r+1} \subseteq V(G)$ of size $|V_i| \ge n^{1 - \beta}$ each, there exists a copy of $K_{r+1}^{-}$ with one vertex in each $V_i$, where $K_{r+1}^{-}$ is a graph obtained by removing an edge from a complete graph with $r+1$ vertices.
  \end{enumerate}
\end{definition}

The next lemma is the heart of the proof of Theorem \ref{thm:main}~\ref{main:M}. It shows that neat graphs contain $K_r$-factors under certain mild conditions on the parameters. 

\begin{lemma} \label{lemma:neat_factor}
  For any integer $r \ge 4$ and a positive $\beta$ there exists positive $\alpha$ and $n_0 \in \mathbb{N}$ such that any $(\alpha, \beta, p, r)$-neat graph with $n_0 < n \in r \mathbb{Z}$ vertices and $p \ge n^{-1/3}$ contains a $K_r$-factor.
\end{lemma}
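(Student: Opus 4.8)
The plan is to build the $K_r$-factor greedily in two phases, combining an absorption-type argument with the Haxell hypergraph matching theorem (Theorem~\ref{thm:haxell}). First I would use property~\ref{prop:in_nbr} as the source of individual copies of $K_r$: given a vertex $v$ and any large enough subset of $N_G(v)$, we find a $K_{r-1}$ inside it, which together with $v$ forms a $K_r$. The difficulty is covering \emph{all} vertices by pairwise-disjoint such copies. My first step is therefore to set aside a small reservoir $W \subseteq V(G)$ of size roughly $\delta n$ for some small $\delta = \delta(\beta)$, so that properties~\ref{prop:in_nbr} and~\ref{prop:chain} remain usable relative to $V(G) \setminus W$, and to first cover $V(G) \setminus W$ (minus a tiny leftover) by vertex-disjoint copies of $K_r$ avoiding $W$, then clean up the $o(n)$ uncovered vertices together with $W$ using the reservoir.

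For the main covering phase, the key step is to set up a Haxell matching. Let $U = \{u_1, \ldots, u_t\}$ be the set of vertices to be covered; for each $u_i$ define an $r$-uniform hypergraph $H_i$ whose edges are the vertex sets of copies of $K_r$ in $G$ that contain $u_i$ and otherwise lie in the allowed part of the board. To apply Theorem~\ref{thm:haxell} I must verify, for every $I \subseteq [t]$, that $\tau(\bigcup_{i\in I} H_i) \ge 2r|I|$. Suppose not: then some set $Z$ of fewer than $2r|I|$ vertices meets every $K_r$ through every $u_i$, $i \in I$. But for a single vertex $u_i$, property~\ref{prop:in_nbr} gives a $K_{r-1}$ inside \emph{any} subset of $N_G(u_i)$ of size at least $\alpha np$, and since $|N_G(u_i)| \ge np/2$ by~\ref{prop:expand}, removing the $o(n)$ vertices of $Z$ (which is $\ll np$ as $p \ge n^{-1/3}$, so $2r|I| \le 2rt = o(np)$ when $t$ is not too large) still leaves $N_G(u_i) \setminus Z$ large enough to host a $K_{r-1}$, contradicting that $Z$ is a cover. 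The technical care here is that $|I|$, hence $t$, could in principle be as large as $n$, so $2r|I|$ need not be $o(np)$; I would handle this by processing $U$ in batches of size $\Theta(np/\log n)$ (or by a direct iterative argument), so that within each batch the cover bound holds, removing each batch's $K_r$'s from the board before the next. Property~\ref{prop:expand} guarantees that after removing $o(n)$ already-used vertices, neighbourhoods and the large-set conditions are essentially undisturbed.

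After this phase all but a set $R$ of $o(n)$ vertices are covered; I then absorb $R$ into the factor. Partition $R$ arbitrarily, and for each small group we want to extend it to a collection of $K_r$'s using the reservoir $W$. Here property~\ref{prop:chain} is the crucial tool: it provides a $K_{r+1}^-$ with prescribed vertices in $r+1$ given large sets, which is exactly the gadget that lets us "reroute" — replace one $K_r$ already in the factor, together with a leftover vertex, by a different pair of $K_r$'s (one incorporating the leftover vertex, using the freed-up vertex and reservoir vertices). Since $|R| = o(n)$ and $|W| = \Theta(n)$, and $n^{1-\beta} = o(n)$, there is ample room to keep the relevant candidate sets of size $\ge n^{1-\beta}$ throughout; the divisibility $n \in r\mathbb{Z}$ ensures the counts work out so that at the end every vertex is used exactly once.

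I expect the main obstacle to be the bookkeeping in the covering phase: ensuring that the Haxell condition $\tau \ge 2r|I|$ holds even though the number of vertices still to be covered can be linear in $n$ while $np$ can be as small as $n^{2/3}$, so $2r|I|$ is not automatically negligible against neighbourhood sizes. The batching argument resolves this, but one must check that each batch can indeed be completed disjointly from all previously used vertices — that is, that property~\ref{prop:in_nbr} is robust to deleting the $o(n)$ vertices used so far — and that the leftover after all batches is genuinely $o(n)$ and small enough for the reservoir to absorb. The second-most delicate point is designing the absorption gadget from $K_{r+1}^-$ so that a single leftover vertex can always be swapped in; this is where property~\ref{prop:chain}, with its $r+1$ free slots, is used, and I would make the reservoir slightly structured (e.g. pre-partitioned) so the required large sets are available on demand.
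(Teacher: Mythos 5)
There is a genuine gap, and it is in the step you describe most briefly: the absorption. Your plan requires that after the covering phase the residual set $R\cup W'$ (a leftover of size $o(n)$ plus whatever remains of a linear-sized reservoir) admits a perfect $K_r$-tiling, and you propose to achieve this by "rerouting" with $K_{r+1}^-$ gadgets. But property \ref{prop:chain} only produces a copy of $K_{r+1}^-$ with one vertex in each of $r+1$ \emph{sets of size $n^{1-\beta}$}; it cannot produce a gadget through a \emph{specific} leftover vertex and \emph{specific} already-used vertices, which is what a swap/reroute needs. Likewise, \ref{prop:in_nbr} lets a leftover vertex $x$ grab a $K_{r-1}$ from any $\alpha np$-subset of $N(x)$, but the $r-1$ vertices it grabs were committed to other tiles, and nothing guarantees those broken tiles can be repaired, nor that the reservoir minus an arbitrary $o(n)$-set of consumed vertices still tiles. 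This is exactly the difficulty the paper's proof is built around: it constructs a cascaded absorber out of $(r,\ell)$-chains (glued $K_{r+1}^-$'s with $\ell+1$ interchangeable "removable" vertices) at $\Theta(1/\beta)$ different scales $\ell_i=n^{1-(4k-i)/4k}$, via Lemmas \ref{lemma:long_chain} and \ref{lemma:absorbing}, precisely so that at every level the union of removable vertices has size at least $n^{1-\beta}$ and \ref{prop:chain} becomes applicable. The end product is a designated flexible set $C_0$ together with a structure $U_1$ such that $G[L_0\cup U_1]$ has a $K_r$-factor for \emph{every} admissible $L_0\subseteq C_0$; leftover vertices then only ever consume vertices of $C_0$. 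Your proposal does not construct anything with this "works for every subset" property, and no single-step use of \ref{prop:chain} can.

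A secondary issue is the covering phase. You correctly identify that Haxell's condition $\tau\ge 2(r-1)|I|$ fails when $|I|$ is linear in $n$ while $np$ can be as small as $n^{2/3}$, but your proposed fix (batching) does not repair this: Haxell's theorem needs the condition for all $I$ within one fixed hypergraph system, and removing earlier batches' vertices changes the hypergraphs of later batches. The paper avoids the problem by not using Haxell for the bulk of the covering at all: it covers greedily using \ref{prop:chain} (any $r$ disjoint sets of size $n^{1-\beta}$ span a $K_r$) until only $O(n^{1-\beta})$ vertices remain, and only then invokes Haxell to match this small leftover $L$ into $C_0$ — there the cover $Z$ has size $O(r|L|)=o(n)$, and the two regimes $|I|\le \log n/p$ (handled by \ref{prop:in_nbr} inside $N(v)\cap C_0$) and $|I|>\log n/p$ (handled by the expansion clause of \ref{prop:expand}) exactly cover what is needed. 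So the tools you list are the right ones, but their roles are inverted, and the central absorbing construction is missing rather than merely unbookkept.
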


The proof of Lemma \ref{lemma:neat_factor} follows an approach from \cite{nenadov18triangle} and we postpone it for the next subsection. We now show how Lemma \ref{lemma:neat_factor} implies the first part of Theorem \ref{thm:main}. 

\begin{proof}[Proof of Theorem \ref{thm:main} (i)]
Let $\alpha > 0$ be as given by Lemma~\ref{lemma:neat_factor} for $\beta = 1/(5 (r+2)^2(r-1))$, let $K=K(\alpha)$ be a sufficiently large integer, and suppose $n \in r \mathbb{Z}$ is sufficiently large. Let $p= K n^{-2/(r+2)}$. We show that Maker can build an $(\alpha, \beta, p, r)$-neat graph in the $b$-biased Maker--Breaker game where $b= cp^{-1}$ for some small constant $c$. Such a graph contains a $K_r$-factor by Lemma~\ref{lemma:neat_factor}.

Maker plays two games in parallel: she plays Game 1 in every odd round and Game 2 in every even round, where Game 1 and Game 2 are defined below. Thus both games can be viewed as $(2b)$-biased Maker--Breaker games and can be played completely independently. For the rest of the argument we assume that Breaker has some fixed strategy, that is, for every disjoint pair $(E_M, E_B)$ of subsets of $E(K_n)$, which represents the current set of Maker's and Breaker's edges, he has some fixed rule what to claim next. If we can show that Maker has a winning strategy against an arbitrary such \emph{rulebook}, then she can win regardless of what Breaker plays. In Game 1, Maker's goal is to build a graph $G_1$ that satisfies \ref{prop:expand} and \ref{prop:in_nbr}, and in Game 2 she builds a graph $G_2$ that satisfies \ref{prop:chain}. Overall, this implies that $G_1 \cup G_2$ is an $(\alpha, \beta, p, r)$-neat graph.

  \paragraph{Game 1.} 
Let $\gamma > 0$ be a constant that we specify later, and let $\Gamma$ be a graph on $n$ vertices that has the following properties. 
  \begin{enumerate}[label={($\Gamma$\arabic*)}]
 
    \item \label{game1:maker} 
    In the $(2b)$-biased Maker--Breaker game on $K_n$, Maker has a strategy to claim a spanning subgraph 
    $G \subseteq \Gamma$ with $\delta(G) \ge (1 - \gamma)np$ and 
    $e_G(N_\Gamma(v)) \ge (1 - \gamma)p^3n^2/2$ for every $v \in V(\Gamma)$.

    \item \label{game1:disc} 
    $\Gamma$ satisfies the assertion of Lemma~\ref{lemma:disc}.

    \item \label{game1:r_1} 
    For every $\tilde n\ge Bp^{-r/2}$, every $m\ge \tilde n^2 p/2$ and every graph $G'\in \cG(K_{r-1},\tilde n, m,\eps)$, 
    if $G'\se \Gamma$ then $G'$ contains $K_{r-1}$ as a subgraph, 
    where $B=B(K_{r-1})$ and $\eps=\eps(K_{r-1})$ are the constants from Lemma~\ref{lemma:KLR_rg} applied to $H=K_{r-1}$.

  \end{enumerate}
We argue briefly that such a graph $\Gamma$ exists. Let $\Gamma \sim \Gnp$. 
Then $\Gamma$ satisfies the assertion of~\ref{game1:maker} a.a.s.~by Theorem~\ref{thm:maker_rg} if we choose $K=K(\gamma)$ large enough and $c=c(\gamma)$ small enough. 
Furthermore, $\Gamma$ satisfies \ref{game1:disc} a.a.s.~by Lemma~\ref{lemma:disc}, and it satisfies~\ref{game1:r_1} by Lemma~\ref{lemma:KLR_rg} applied to $H=K_{r-1}$ where we note that $p= K n^{-2/(r+2)} > n^{-2/r} = n^{-1/m_2(K_{r-1})}$ and $m_2(K_{r-1}) \ge 2$. Therefore, we can choose one particular graph $\Gamma$ which has these properties. 

Let $G \subseteq \Gamma$ be a spanning subgraph guaranteed by \ref{game1:maker}. We show that $G$ satisfies \ref{prop:expand} and \ref{prop:in_nbr}. 

For \ref{prop:expand} note that $G$ can be obtained from $\Gamma$ by removing at most $2 \gamma np$ edges touching each vertex since the maximum degree of $\Gamma$ is at most $(1 + \gamma)np$, by \ref{game1:disc}, and since $\delta(G) \ge (1 - \gamma)np$, by \ref{game1:maker}. 
Furthermore, let $X, Y\se V(G)$ be disjoint subsets of size $|X|\ge \log n/p$ and $|Y|\ge \alpha n$, respectively. Then  $e_\Gamma(X, Y) \ge (1 - \gamma)|X||Y|p$ by \ref{game1:disc}. But then at most $|X| \cdot 2\gamma np$ of those edges are not present in $G$ by the preceding observation. 
By choosing $\gamma < \alpha / 8$, we have 
  $$
    e_G(X, Y) \ge (1 - \gamma)|X||Y|p - |X| \cdot 2 \gamma np > |X||Y|p/2.
  $$
  Therefore there exists a vertex  $v \in X$ with at least $|Y|p/2$ neighbours in $Y$.

For \ref{prop:in_nbr} we show that for every $v\in V(G)$, every subset $X\se N_G(v)$ of size $|X|\ge \alpha np$ hosts a copy of some $G'\in \cG(K_{r-1},\tilde n, m,\eps)$, for suitable $\tilde n$, $m$ and $\eps$, which  contains a copy of $K_{r-1}$ by~\ref{game1:r_1}. Fix $v\in V(G)$ and note that we have $|N_{\Gamma}(v)| = (1\pm \gamma) np\gg \log n /p$ by \ref{game1:disc} and assumption on $p$. Thus,  again by \ref{game1:disc},
$$ e_\Gamma(N_\Gamma(v)) \le (1 + \gamma) |N_\Gamma(v)|^2p/2 \le (1 + \gamma)^3 n^2p^3 / 2 < (1 + 4 \gamma) n^2 p^3 / 2, $$ 
where in the last inequality we assumed that $\gamma$ is sufficiently small. From \ref{game1:maker} we conclude that $G[N_\Gamma(v)]$ is `missing' at most $5 \gamma n^2 p^3 / 2$ edges. For brevity, let us upper bound this by $3 \gamma n^2 p^3$. More precisely, there exists a graph $R_v$ on the vertex set $N_\Gamma(v)$ such that $e(R_v) \le 3 \gamma n^2p^3$ and $G[N_\Gamma(v)] = \Gamma[N_\Gamma(v)] \setminus R_v$. Therefore, for all disjoint $X, X' \subseteq N_\Gamma(v)$ we have 
  $$
     e_\Gamma(X, X') - 3 \gamma n^2 p^3 \le e_G(X, X') \le e_\Gamma(X, X'). 
  $$
Additionally, if $|X|,|X'|\gg \log n /p$ then $e_\Gamma(X, X') = (1\pm \gamma)|X||X'|p$ by~\ref{game1:disc}. Let $\eps' = \eps(K_{r-1})/4$, where $\eps(K_{r-1})$ is given in~\ref{game1:r_1}. 
It follows that for any two disjoint subsets $X,X'\se N_{\Gamma}(v)$ of size at least $\eps'\cdot (\alpha n p/r)$ we have 
  \begin{align*}
    e_G(X, X') &\ge (1 - \gamma)|X||X'| p - 3 \gamma n^2 p^3  
    \ge \left(1- \eps'\right) |X||X'| p
  \end{align*}
 and 
$$  e_G(X, X') \le (1+\gamma) |X||X'| p \le \left(1+\eps'\right) |X||X'| p,$$ 
if we choose $\gamma$ small enough in terms of $\eps'$, $\alpha$ and $r$. 
This implies that any two disjoint subsets $V_1, V_2 \subseteq N_\Gamma(v)$ of size $\alpha np/r$ form a $(2 \eps')$-regular pair.

Let now $X \subseteq N_G(v) \subseteq N_\Gamma(v)$ be of size $\alpha np$.  Arbitrarily choose $r-1$ disjoint subsets $V_1, \ldots, V_{r-1} \subseteq X$ of size $ \tilde n = \alpha np / r$. Note that  
$ \tilde n \ge B p^{-r/2} = B p^{-m_2(K_{r-1})}$, where $B=B(K_{r-1})$ is the constant given by~\ref{game1:r_1}, since $p \ge K n^{-2/(r+2)}$ and $K$ is a sufficiently large constant. As previously observed, every $(V_i, V_j)$ forms a $(2\eps')$-regular pair with $m_{i,j} = (1 \pm \eps') \tilde n^2 p$ edges,
  thus we can apply Lemma \ref{lemma:exact_m_edges} to each pair $(V_i, V_j)$ in order to obtain a subset $E_{ij} \subseteq E_G(V_i, V_j)$ of size exactly 
  $$
    m = \tilde n^2 p /2
  $$
  such that $(V_i, V_j)$ is $(4\eps')$-regular, i.e.~$(\eps)$-regular, with respect to $E_{ij}$. This gives us a graph $G' \in \calG(K_{r-1}, \tilde n, m, \eps)$. As $G'$ is a subgraph of $\Gamma$, from \ref{game1:r_1} we conclude that it contains a copy of $K_{r-1}$. Thus, $G$ satisfies~\ref{prop:in_nbr}.

  \paragraph{Game 2.} The properties \ref{prop:in_nbr} and \ref{prop:chain} are achieved in a fairly similar way. Thus, the analysis of Game 2 follows along the lines of the second part of Game 1. There are some crucial differences in the choice of parameters though. 
Recall that $\beta = 1/(5(r+2)^2(r-1))$ and that for property \ref{prop:chain} we want to find a copy of $K_{r+1}^-$ in certain sets of size $n^{1-\beta}$. Let $H=K_{r+1}^-$, let now $\gamma = n^{-3\beta}$ and let $q \in (0,1)$ satisfy 
 \begin{equation}\label{qBounds}
n^{-\frac{1-\beta}{m_2(H)}}\ll q \ll n^{-\frac{2}{r+2}} \gamma^6.
\end{equation} 
Note that this is possible since $m_2(K_{r+1}^-)= \frac{r+2}{2}-\frac{1}{r-1}$ and by choice of $\beta$. Delicate choices for parameters $\gamma$ and $q$ will become apparent soon. We claim that a random graph $\Gamma \sim \Gnq$ has the following properties with high probability. 
  \begin{enumerate}[label={($\Gamma$\arabic*)}]
    \item \label{game2:maker} In the $(2b)$-biased Maker--Breaker game on $K_n$,  Maker has a strategy to claim a spanning subgraph $G \subseteq \Gamma$ with $\delta(G) \ge (1 - \gamma)nq$ and $e_G(N_\Gamma(v)) \ge (1 - \gamma)q^3n^2/2$ for every $v \in V(\Gamma)$.

    \item \label{game2:disc} $\Gamma$ satisfies the assertion of Lemma \ref{lemma:disc};

    \item \label{game2:r_1} 
      For every $\tilde n\ge Bq^{-m_2(H)}$, every $m\ge \tilde n^2 q/2$ and every graph $G'\in \cG(H,\tilde n, m,\eps)$, 
    if $G'\se \Gamma$ then $G'$ contains $H$ as a subgraph, 
    where now $B=B(H)$ and $\eps=\eps(H)$ are the constants from Lemma~\ref{lemma:KLR_rg} applied to $H=K_{r+1}^-$.
    
  \end{enumerate}
Let us verify that $\Gamma$  indeed has these properties~a.a.s. For~\ref{game2:maker} let us verify that the conditions of 
Theorem~\ref{thm:maker_rg} hold. Firstly, $q\ge 10^{8}\gamma^{-2}n^{-1/2}$ is implied by the lower bound in~\eqref{qBounds} since $r\ge 4$ and since $\beta< 1/50$, say. Secondly, recall that $b= cp^{-1}=O(n^{2/(r+2)})$. This together with the upper bound in~\eqref{qBounds} implies that $\gamma^{6} q^{-1} \gg b$, so that \ref{game2:maker} holds a.a.s.~by Theorem~\ref{thm:maker_rg}.
Just as in Game 1, $\Gamma$ satisfies~\ref{game2:disc}~a.a.s.~by Lemma~\ref{lemma:disc}. 
Finally, note that the lower bound in~\eqref{qBounds} implies in particular that $q\ge n^{-1/m_2(H)}$. Thus~\ref{game2:r_1}
holds a.a.s.~by Lemma~\ref{lemma:KLR_rg} applied to $H=K_{r+1}^-$. 
  
Fix $\Gamma$ with these three properties and let $G \subseteq \Gamma$ be a spanning subgraph satisfying \ref{game2:maker}. Crucially, we have sacrificed the value of $q$, which is now significantly smaller than $n^{-2/(r+2)}$, in order to get a smaller error term $\gamma$. Note that in order to guarantee that $G$ satisfies property \ref{prop:in_nbr} in Game 1 we needed $p = \Omega(n^{-2/(r+2)})$. Here it will turn out that a smaller $p$ (which we denote by $q$) suffices provided $\gamma$ is sufficiently small. We now make this precise.

For $\eps' = \eps(H) /4 >0$ consider disjoint subsets $X, X' \subseteq V(G)$ of size at least $\eps' n^{1 - \beta}$. 
First note that 
  \begin{align*}
  e_{\Gamma} (X,X') = (1\pm \gamma)|X||X'|q, 
  \end{align*}
  by~\ref{game2:disc} since $\gamma^2n^{1-\beta}q\gg \log n$ by choice of $\beta$ being small enough and~\eqref{qBounds}.    
As before, we have that $G$ is obtained from $\Gamma$ by removing at most $2 \gamma nq$ edges touching each vertex, which sums to at most $\gamma n^2 q$ removed edges in total. This, together with the above estimate on  $  e_{\Gamma} (X,X')$, implies that
  \begin{align*}
    (1 + \gamma)|X||X'| q \ge e_G(X, X') &\ge (1 - \gamma)|X||X'|q - \gamma n^2 q \\	&\ge (1 - \eps')|X||X'|q, 
  \end{align*}
  since $\gamma < \eps'/2$, say, and $\gamma n^2 = n^{2-3\beta}\ll \eps' |X||X'|/2$.  Note that it was crucial here that $\gamma \ll n^{- 2\beta}$, that is, $\gamma$ polynomially depends on $n$. Therefore, every pair of disjoint subsets $X, Y \subseteq V(G)$ of size $n^{1 - \beta}$ forms a $(2\eps')$-regular pair. 

 The rest of the argument is the same as in the previous case. Consider some disjoint $V_1, \ldots, V_{r+1} \subseteq V(G)$, each of size $\tilde n = n^{1 - \beta}$. As observed, each pair $(V_i, V_j)$ forms a $(2\eps')$-regular pair with $m_{ij} \ge (1 - \eps')\tilde n^2 q$ edges. By Lemma \ref{lemma:exact_m_edges}, there exists a subset $E_{ij} \subseteq E_G(V_i, V_j)$ of size exactly
  $$
    m = \tilde n^2 q / 2
  $$
  such that $(V_i \cup V_j, E_{ij})$ is $(4\eps')$-regular, i.e.~$(\eps)$-regular. This gives us a subgraph $G' \subseteq G$ which belongs to $\calG(K_{r+1}^-, \tilde n, m, \eps)$. From the lower bound in~\eqref{qBounds} we infer that $\tilde n \ge B(H) q^{-m_2(H)}.$   Thus \ref{game2:r_1} implies that $G'$ contains a copy of $K_{r+1}^-$ with one vertex in each $V_i$. 
\end{proof}

\subsection{Neat graphs contain $K_r$-factors (Lemma \ref{lemma:neat_factor})}

The proof of Lemma \ref{lemma:neat_factor} closely follows ideas from \cite{nenadov18triangle} which are, in turn, based on ideas of Krivelevich \cite{krivelevich1997triangle}. Recall that $K_{r+1}^-$ denotes the graph obtained from $K_{r+1}$ by removing an edge. The main building block in the proof is an \emph{$(r, \ell)$-chain}, the graph obtained by sequentially `gluing' $\ell \ge 0$ copies of $K_{r+1}^-$ on a vertex of degree $r-1$ (see Figure \ref{fig:chain}). We define the $(r, 0)$-chain to be a single vertex. A graph is an \emph{$r$-chain} if it is isomorphic to an $(r, \ell)$-chain, for some integer $\ell \ge 0$.

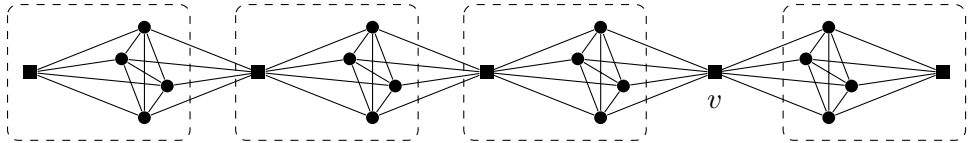
\begin{figure}[h!]   
  \centering
  \begin{tikzpicture}[scale = 0.6]
    \tikzstyle{blob} = [fill=black,circle,inner sep=1.7pt,minimum size=0.5pt]
    \tikzstyle{sq} = [fill=black,rectangle,inner sep=2.5pt,minimum size=2.5pt]

    \foreach \x in {1,...,4}{
      \node[sq] (f\x) at (0 + 5 * \x, 0) {};
      \foreach \c/\y/\z in {1/2.5/1, 2/2.5/-1, 3/2/0.3, 4/3/-0.3}
        \node[blob] (v\x\c) at (0 + 5 * \x + \y, \z) {};       
    }
    \node[sq] (f5) at (25, 0) {};

    \foreach \x/\d in {1/1,2/2,3/3,4/4}{
      \foreach \c in {1,...,4}
        \draw (f\x) -- (v\x\c);

      \draw (v\x1) -- (v\x2) -- (v\x3) -- (v\x4) -- (v\x1);
      \draw (v\x1) -- (v\x3); \draw (v\x2) -- (v\x4);
    }

    \foreach \x/\d in {2/1,3/2,4/3,5/4}{
      \foreach \c in {1,...,4}
        \draw (f\x) -- (v\d\c);      
    }

    \node [below=2pt of f4] {$v$};

    \foreach \x in {1,...,3}
      \draw[dashed, rounded corners] (-0.5 + 5 * \x, 1.5) rectangle (0.5 + 5 * \x + 3, -1.5);

    \draw[dashed, rounded corners] (-0.5 + 5 * 4 + 2, 1.5) rectangle (0.5 + 5 * 5, -1.5);
  \end{tikzpicture}
  \caption{The $(5, 4)$-chain with a $K_5$-factor after removing vertex $v$.} 
  \label{fig:chain}  
\end{figure}

An $(r, \ell)$-chain contains $\ell + 1$ vertices such that removing either of them (but exactly one!) results in a graph which contains a $K_r$-factor. We call such vertices \emph{removable}. If a graph $H$ is an $r$-chain then we use $R(H)$ to denote the set of its removable vertices. We repeatedly use the following observation.

\begin{observation} \label{obs:canonical}
  Let $G$ be a graph and $C_1, \ldots, C_r \subseteq G$ be vertex disjoint $r$-chains. If there exists a copy of $K_r$ in $G$ which intersects each $R(C_i)$ then the subgraph of $G$ induced by $\bigcup_{i \in [r]} V(C_i)$ contains a $K_r$-factor.
\end{observation}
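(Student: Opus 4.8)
The plan is to prove Observation~\ref{obs:canonical} directly by exhibiting the $K_r$-factor explicitly. Let $K \subseteq G$ be the copy of $K_r$ whose existence is assumed, say $K$ has vertex set $\{u_1, \ldots, u_r\}$ where $u_i \in R(C_i)$ for each $i \in [r]$ (after relabelling the chains so that the intersection pattern lines up; note that since $K$ intersects each $R(C_i)$ and the chains are vertex disjoint, $K$ meets each $R(C_i)$ in exactly one vertex and contains no other vertices, because $|V(K)| = r$ and there are $r$ chains). The key point is the defining property of removable vertices recalled just before the observation: for each $i$, the chain $C_i$ with the single vertex $u_i$ deleted contains a $K_r$-factor, i.e.\ $C_i - u_i$ can be partitioned into copies of $K_r$.

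First I would note that $V\big(\bigcup_{i \in [r]} C_i\big)$ decomposes as the disjoint union of $V(K) = \{u_1, \ldots, u_r\}$ together with the sets $V(C_i) \setminus \{u_i\}$ for $i \in [r]$. On $\{u_1, \ldots, u_r\}$ we place the single copy of $K_r$ given by $K$ itself (its edges lie in $G$ by assumption). On each $V(C_i) \setminus \{u_i\}$ we place the $K_r$-factor of $C_i - u_i$ guaranteed by removability; since $C_i \subseteq G$, all these edges lie in $G$ as well. These copies of $K_r$ are pairwise vertex disjoint: the copies coming from distinct chains are disjoint because the chains are vertex disjoint, the copy $K$ is disjoint from each $V(C_i) \setminus \{u_i\}$ by construction, and within a single chain the factor of $C_i - u_i$ is by definition a collection of vertex-disjoint copies. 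Together they cover every vertex of $\bigcup_{i \in [r]} V(C_i)$ exactly once, so their union is a $K_r$-factor of the induced subgraph $G\big[\bigcup_{i\in[r]} V(C_i)\big]$.

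There is essentially no obstacle here; the only thing requiring a word of care is the reduction in the first paragraph, namely checking that $K$ picks out exactly one removable vertex from each chain and nothing else. This is a simple counting argument: $K$ has $r$ vertices, it must hit each of the $r$ sets $R(C_1), \ldots, R(C_r)$, and these sets are pairwise disjoint (being subsets of pairwise vertex-disjoint chains), so by pigeonhole it hits each in exactly one vertex and has no vertex outside $\bigcup_i R(C_i)$. After that relabelling, the construction of the factor is the forced one described above, and verifying disjointness and coverage is immediate from the definitions. Hence no genuine difficulty arises; the observation is really just bookkeeping built on the definition of a removable vertex.
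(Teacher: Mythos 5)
Your proof is correct and is exactly the intended argument, which the paper leaves implicit since it states this as an unproved observation: use $K$ on the $r$ removable vertices it selects (one per chain, by the disjointness/counting argument you give) and the $K_r$-factor of each $C_i - u_i$ guaranteed by the definition of removability. No gaps.
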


The following lemma together with property \ref{prop:chain} ensures the existence of large $(r, \ell)$-chains. 
\begin{lemma} \label{lemma:long_chain}
Let $G$ be a graph with $n$ vertices such that for every disjoint $X, Y \subseteq V(G)$, each of size at least $\alpha n$, there exists a copy of $K_{r+1}^-$ in $G$ with one vertex of degree $r-1$ in $X$ and all other vertices in $Y$. Then $G$ contains an $(r, \ell$)-chain for every $\ell < (1 - (r+2)\alpha) n/r$. 
\end{lemma}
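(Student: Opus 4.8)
The plan is to build the $(r,\ell)$-chain greedily, one copy of $K_{r+1}^-$ at a time, maintaining a large reservoir of unused vertices. Concretely, I would construct an increasing sequence of $r$-chains $H_0 \subseteq H_1 \subseteq \cdots \subseteq H_\ell$, where $H_j$ is an $(r,j)$-chain, together with a distinguished removable vertex $v_j$ of $H_j$ that will serve as the gluing point for the next copy. Start with $H_0$ being a single vertex $v_0 \in V(G)$, chosen arbitrarily. Suppose $H_j$ has been built for some $j < \ell$; it uses $jr + 1$ vertices of $G$. Let $Y_j = V(G) \setminus V(H_j)$ be the set of unused vertices, and let $X_j = \{v_j\}$ be the singleton containing the current gluing vertex. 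The issue is that $X_j$ is too small to apply the hypothesis directly, so I would instead take $X_j$ to be a set of size $\alpha n$ that contains $v_j$; but then the copy of $K_{r+1}^-$ might attach to the wrong vertex of $X_j$. The cleaner route is the following.

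Rather than forcing the new copy of $K_{r+1}^-$ to attach exactly at $v_j$, I would keep a set $S_j \subseteq R(H_j)$ of removable vertices of $H_j$ that are still "available" as gluing points, and argue that $|S_j|$ stays large. Actually, the simplest correct approach: observe that an $(r,\ell)$-chain has $\ell+1$ removable vertices, and in the greedy construction we only ever need \emph{one} fresh removable vertex to glue onto, namely the last one created. So maintain the invariant that $H_j$ is an $(r,j)$-chain and that $v_j$ is a removable vertex of $H_j$; to extend, I need a copy of $K_{r+1}^-$ in $G$ whose degree-$(r-1)$ vertex is $v_j$ and whose remaining $r$ vertices lie in $Y_j$. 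The hypothesis of the lemma gives such a copy provided I can find it with the degree-$(r-1)$ vertex in \emph{some} set $X$ of size $\ge \alpha n$ and the rest in $Y_j$. To reconcile this with needing the apex to be exactly $v_j$: I would take $X$ to be \emph{any} set of size $\alpha n$ disjoint from $Y_j \cup (\text{old chain})$... which is impossible since everything outside $H_j$ is in $Y_j$. So the honest fix is to \emph{reserve} vertices: before starting, do nothing special, but at step $j$ split $Y_j$ into a "gluing pool" $P_j$ of size $\alpha n$ (disjoint from $H_j$) and the rest $Y_j' = Y_j \setminus P_j$ of size $\ge \alpha n$, apply the hypothesis with $X = P_j$, $Y = Y_j'$ to get a copy $F$ of $K_{r+1}^-$ with apex $u \in P_j$ and the other $r$ vertices in $Y_j'$, and then glue $F$ onto $H_j$ by identifying $u$ with $v_j$? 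That identification is not legitimate since $u \ne v_j$ in $G$.

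Let me restructure cleanly. The correct statement of the greedy step uses that a single copy of $K_{r+1}^-$ is itself an $(r,1)$-chain with two removable vertices. So I would build the chain \emph{from the end}: maintain a "free end" removable vertex $w_j$ of the current $(r,j)$-chain $H_j$, where $w_j$ was one of the vertices introduced in the last gluing and currently has chain-degree $r-1$ inside $H_j$ but is otherwise a normal vertex of $G$. To extend, apply the hypothesis with $Y = V(G) \setminus V(H_j)$ (size $\ge n - (jr+1) - \ge \alpha n$ once $j < (1-(r+2)\alpha)n/r$, using $r \cdot \frac{(1-(r+2)\alpha)n}{r} + 1 = (1-(r+2)\alpha)n + 1$, leaving at least $(r+2)\alpha n - 1 \ge \alpha n$) and $X$ a set of size $\alpha n$ \emph{inside $V(H_j)$} chosen to contain $w_j$ — here I need $|V(H_j)| \ge \alpha n$, which fails for small $j$, but for small $j$ we simply don't need the hypothesis at all since we can build short chains by brute force, or we pad the very first vertex into an $\alpha n$-set and note the degree-$(r-1)$ vertex could be any element of $X$, all of which have chain-degree $0$ in $H_0$ anyway so any works.

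The key steps, then, are: (1) set up the greedy construction with the invariant "$H_j$ is an $(r,j)$-chain with a designated removable vertex $w_j$ of chain-degree $\le r-1$"; (2) at each step verify the size bounds $|X|, |Y| \ge \alpha n$ using the counting $v(H_j) = jr+1$ and the hypothesis $\ell < (1-(r+2)\alpha)n/r$ — this is the arithmetic heart and is routine; (3) invoke the hypothesis to obtain the new $K_{r+1}^-$, check that attaching it at $w_j$ yields a genuine $(r,j+1)$-chain and identify the new free end $w_{j+1}$; (4) conclude by induction. I expect the main obstacle to be the bookkeeping in step (3): ensuring that the degree-$(r-1)$ vertex of the new copy can be \emph{taken to be} $w_j$ and not some other element of $X$. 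The right way around this is to take $X = \{w_j\}$ is too small, so instead take $X$ of size $\alpha n$ consisting of $w_j$ together with $\alpha n - 1$ \emph{already-used interior} vertices of $H_j$ (vertices of chain-degree $> r-1$, which cannot be the apex of a new $K_{r+1}^-$ without creating a multi-edge — no, that's not automatic either). Cleanest: strengthen the induction to carry a set $W_j$ of $\ge \alpha n$ removable vertices of $H_j$ all having chain-degree exactly $r-1$, apply the hypothesis with $X = W_j$, get an apex $u \in W_j$ with chain-degree $r-1$, glue the new copy onto $u$ (legitimate: $u \in V(H_j) = V(G)$-vertices, and adding the $r$ new vertices from $Y$ with their edges to $u$ and among themselves creates no conflict since those $Y$-vertices are fresh), and observe $W_{j+1} = (W_j \setminus \{u\}) \cup \{$the new degree-$(r-1)$ removable vertex$\}$ still has size $\ge \alpha n$ as long as $|W_j| \ge \alpha n + 1$ — which holds with room to spare since each step changes $|W_j|$ by at most a constant and we could even take $W_0$ to be an $\alpha n$-subset of $V(G)$ (all chain-degree $0 \le r-1$, vacuously "removable" in the $(r,0)$-sense after we make the definitions match). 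Wrapping this up is then a straightforward induction.
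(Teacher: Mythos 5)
The paper does not actually prove this lemma: it states that the case $r=3$ is Lemma~3.1 of \cite{nenadov18triangle} and that ``trivial adjustments'' give the general case, so there is no in-paper argument to compare against. Judged on its own terms, your proposal has a genuine gap at exactly the point you keep circling around, and the final fix you settle on does not close it. The difficulty is that the hypothesis only guarantees a copy of $K_{r+1}^-$ whose degree-$(r-1)$ vertex lies \emph{somewhere} in a set $X$ of size $\alpha n$; it never lets you prescribe that vertex. For a single $(r,j)$-chain the only vertices at which gluing a fresh $K_{r+1}^-$ produces an $(r,j+1)$-chain are the two end vertices $u_0,u_j$: the internal removable vertices already have degree $2(r-1)$ in the chain, and attaching there yields a branched graph, not a chain. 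So a single chain never supplies $\alpha n$ legitimate gluing points, and your strengthened invariant ``$H_j$ is a chain carrying a set $W_j$ of $\ge \alpha n$ removable vertices of chain-degree $r-1$'' is unsatisfiable.

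Your fallback --- seeding $W_0$ with $\alpha n$ arbitrary vertices, i.e.\ $\alpha n$ disjoint $(r,0)$-chains --- silently changes the object being built from one chain into a \emph{forest} of up to $\alpha n$ disjoint chains. Each application of the hypothesis extends whichever chain happens to contain the apex $u\in W_j$ that the graph provides, and nothing in the hypothesis prevents these extensions from being spread evenly over all $\Theta(\alpha n)$ chains. Since the total number of extensions is at most $n/r$ (each consumes $r$ fresh vertices), the longest individual chain your process is guaranteed to produce has length only $O(1/(r\alpha))$, a constant, whereas the lemma requires length up to $(1-(r+2)\alpha)n/r=\Theta(n)$. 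The concluding ``straightforward induction'' therefore does not yield the claimed $(r,\ell)$-chain. To repair this you need an additional idea that concentrates successive gluings on one structure --- this is precisely the content of the proof of Lemma~3.1 in \cite{nenadov18triangle} that the authors import, and it cannot be replaced by the one-copy-at-a-time greedy scheme with a padded apex set.
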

In the case when $r=3$ this is Lemma~3.1 in~\cite{nenadov18triangle}. Trivial adjustments to that proof give Lemma~\ref{lemma:long_chain}. We omit the proof.

The following, somewhat technical looking lemma provides a crucial \emph{absorbing} property of a collection of $r$-chains that we exploit in the proof of Lemma \ref{lemma:neat_factor}.

\begin{lemma} \label{lemma:absorbing}
  Let $G$ be a graph with $n$ vertices which satisfies \ref{prop:chain} for some $\beta > 0$ and $r$, where $n \ge n_0(\beta, r)$ is sufficiently large. Let $W \subseteq V(G)$ be a subset of size $|W| \ge n/8$, and let $\ell, t \in \mathbb{N}_0$ and $\ell' \ge \ell, t' \in \mathbb{N}$ be such that:
  \begin{itemize}
    \item $(\ell + 1) t' > n^{1 - \beta/2}$, and
    \item $(t + t')(r \ell + 1) < |W|/2$.
  \end{itemize}
  Suppose we are given disjoint $(r, \ell')$-chains $C_1', \ldots, C'_{t'} \subset V(G) \setminus W$. Then there exist disjoint $(r, \ell)$-chains $C_1, \ldots, C_t \subset G[W]$ with the following property: for every $L \subseteq [t]$ there exists $L' \subseteq [t']$ such that the subgraph of $G$ induced by
  $$
    \left( \bigcup_{i \in L} V(C_i) \right) \cup \left( \bigcup_{i \in L'} V(C'_i) \right)
  $$
  contains a $K_r$-factor.
\end{lemma}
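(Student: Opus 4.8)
## Proof proposal for Lemma \ref{lemma:absorbing}

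The plan is to build the chains $C_1,\ldots,C_t$ one at a time inside $G[W]$, greedily, maintaining at every stage the ability to ``absorb'' an arbitrary subset of the so-far-constructed chains into a $K_r$-factor using the reservoir chains $C_1',\ldots,C'_{t'}$. The key insight is that each $(r,\ell)$-chain $C_i$ has $\ell+1$ removable vertices, so by Observation~\ref{obs:canonical} the graph induced on $C_i$ together with $r-1$ of the reservoir chains contains a $K_r$-factor \emph{provided} there is a copy of $K_r$ hitting $R(C_i)$ and one $R(C_j')$ from each of $r-1$ of the reservoir chains. Thus to absorb a set $L$ of new chains I want to match each $i\in L$ to a fresh $(r-1)$-tuple of reservoir chains and find the required $K_r$'s; the counting condition $(t+t')(r\ell+1)<|W|/2$ is there precisely to guarantee enough room, and $(\ell+1)t' > n^{1-\beta/2}$ guarantees that the union $\bigcup_{j} R(C_j')$ (or suitable pieces of it) has size at least $n^{1-\beta}$ so that property \ref{prop:chain} can be applied to produce a $K_{r+1}^-$, hence (iterating) a $K_r$.

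Concretely, I would proceed as follows. First, using Lemma~\ref{lemma:long_chain} (whose hypothesis follows from \ref{prop:chain} applied to sets of size $n^{1-\beta}$, which are much smaller than $\alpha n$ for the relevant $\alpha$ — here one checks the arithmetic carefully), repeatedly carve out disjoint $(r,\ell)$-chains $C_1,\ldots,C_t$ inside $G[W]$; this is possible since at each step we have removed fewer than $(t+t')(r\ell+1) \le (t+t')(r\ell+1) < |W|/2$ vertices from $W$, so a linear-sized chunk of $W$ remains and \ref{prop:chain} still holds there. Second, I must verify the absorbing property: fix $L\subseteq[t]$. I will greedily process the chains $\{C_i : i\in L\}$ in some order; for the current $C_i$, I look at its removable set $R(C_i)$ and at the removable sets of the reservoir chains not yet used. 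Within a single $(r,\ell')$-chain $C_j'$, the set $R(C_j')$ has $\ell'+1\ge\ell+1$ vertices, and by applying \ref{prop:chain} repeatedly I can find, inside $R(C_j')$ together with $R(C_i)$ and two other reservoir removable-sets, a copy of $K_{r+1}^-$, and then (extending edge by edge, using \ref{prop:chain} each time on the sets of size $n^{1-\beta}$ obtained by intersecting with the remaining removable vertices) a full $K_r$ meeting $R(C_i)$ and $R(C_j'^{(1)}),\ldots,R(C_j'^{(r-1)})$ for $r-1$ distinct reservoir chains. Record these $r-1$ chains as ``used'' and move on. After all of $L$ is processed, $L'\subseteq[t']$ is the set of all reservoir chains used (together with the unused ones thrown in arbitrarily, or rather: we include exactly the used ones, since each used $C_j'$ with its designated removable vertex deleted has a $K_r$-factor on the rest, and we need \emph{all} reservoir chains covered — this is the subtle point, see below). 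Then Observation~\ref{obs:canonical} applied to each group $\{C_i, C_j'^{(1)},\ldots,C_j'^{(r-1)}\}$ yields a $K_r$-factor on their union, and the $K_r$-factors of the ``leftover parts'' of the reservoir chains complete the factor on $\big(\bigcup_{i\in L}V(C_i)\big)\cup\big(\bigcup_{j\in L'}V(C_j')\big)$.

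The main obstacle is the bookkeeping for the reservoir chains: when $C_i$ is absorbed using chains $C_{j_1}',\ldots,C_{j_{r-1}}'$, each $C_{j_s}'$ contributes \emph{one} removable vertex $w_s$ to the $K_r$, and then $C_{j_s}'-w_s$ has a $K_r$-factor, so the whole of $V(C_{j_s}')$ gets covered. But a reservoir chain not used by any $C_i$ is an $(r,\ell')$-chain with no vertex removed, which does \emph{not} have a $K_r$-factor (it has $r\ell'+1$ vertices, not divisible by $r$). Hence I cannot simply ``throw in the unused ones''; instead $L'$ must be exactly the set of used reservoir chains, and I need the number of distinct reservoir chains consumed to equal a controlled quantity — in fact I should set up the process so that each reservoir chain is used at most once and the leftover ones are simply \emph{not} included in $L'$. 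Re-examining: the statement only asks for the existence of \emph{some} $L'$, and the union is over $i\in L$ and $j\in L'$, so taking $L'$ = exactly the used reservoir chains works, as long as every used $C_j'$ is fully covered (it is) and $|L|(r-1)\le t'$ so we never run out — which follows from $t'(r\ell+1)$ comparisons against $|W|$... actually from $(\ell+1)t' > n^{1-\beta/2} \gg t$, giving $t' \gg t \ge |L|$, hence plenty of reservoir chains. The one genuinely delicate step is iterating \ref{prop:chain} to upgrade a $K_{r+1}^-$ into a $K_r$ inside prescribed removable-vertex sets while keeping all $r+1$ candidate sets of size $\ge n^{1-\beta}$ throughout; I would handle this by noting $\ell,\ell'$ and $t'$ are large enough that the relevant removable sets, even after removing previously-used vertices, retain size $\ge n^{1-\beta}$, which is exactly what the hypothesis $(\ell+1)t' > n^{1-\beta/2}$ buys.
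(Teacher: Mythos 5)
Your high-level plan (build the $C_i$ greedily via Lemma~\ref{lemma:long_chain}, then for each $i\in L$ find a $K_r$ joining $R(C_i)$ to removable sets of $r-1$ reservoir chains, and let $L'$ be the used reservoir chains) has two genuine gaps, and the paper's proof is structured precisely to avoid them.

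First, your counting is wrong: you claim $(r-1)|L|\le t'$ because ``$t'\gg t$'', but the hypotheses do not imply this, and in the actual applications inside Lemma~\ref{lemma:neat_factor} one has $t\gg t'$ (e.g.\ the last application has $t=\Theta(n)$ and $t'=\Theta(n^{1-1/4k})$). So you cannot afford to spend $r-1$ fresh reservoir chains on every element of $L$. The paper instead first shrinks $L$ \emph{internally}: it takes a minimal $S\subseteq L$ such that $\bigcup_{i\in L\setminus S}V(C_i)$ already has a $K_r$-factor (groups of $r$ new chains absorb each other via Observation~\ref{obs:canonical}), shows $|S|<t'/8r$ by a \ref{prop:chain} argument, and only the small set $S$ draws on the reservoir.

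Second, and more fundamentally, you propose to apply \ref{prop:chain} to the individual sets $R(C_i)$ and $R(C_j')$, asserting they ``retain size $\ge n^{1-\beta}$''. They need not: $|R(C_i)|=\ell+1$, and the hypothesis $(\ell+1)t'>n^{1-\beta/2}$ only makes a \emph{union over $\Omega(t')$ many chains} large enough (indeed $\ell=0$ is allowed, so $R(C_i)$ can be a single vertex). Consequently there is no guarantee that any particular $C_i$ admits even one suitable $K_r$, and a chain-by-chain greedy argument cannot get off the ground. The paper handles this by constructing $t+t'$ candidate chains, encoding the available $K_r$'s as $(r-1)$-uniform hypergraphs $H_i$ on $[t']$, showing via \ref{prop:chain} (applied to unions of removable sets over many chains) that all but at most $t'/8r$ indices satisfy Haxell's cover condition $\tau(\bigcup_{i\in J}H_i)\ge 2r|J|$, discarding the bad ones, and then using Haxell's theorem (Theorem~\ref{thm:haxell}) to select pairwise disjoint hyperedges for the indices in $S$. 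This aggregation-plus-matching step is the missing idea in your proposal; without it the per-chain existence claims and the disjointness of the chosen reservoir tuples are both unjustified.
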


Before we prove Lemma \ref{lemma:absorbing} it is instructive to first see how it is used to derive Lemma \ref{lemma:neat_factor}. 

\begin{proof}[Proof of Lemma \ref{lemma:neat_factor}]
  Consider an equipartition $V(G) = V_1 \cup V_2$ chosen uniformly at random. As each vertex has $np/2 \gg \log n$ neighbours (follows from \ref{prop:expand} and the bound on $p$), by Chernoff's inequality and union-bound we have with high probability that every vertex has at least $np/8$ neighbours in $V_1$. Therefore there exists a partition for which this holds. 

  Without loss of generality we may assume that $\beta = 1/k$ for some integer $k \ge 2$. For each $i \in \{1, \ldots, 4k-1\}$ set $\ell_i = n^{1 - (4k-i)/4k}$ and $t_i = n / (32 k (r \ell_i + 1))$. Note that 
  \begin{equation} \label{eq:ell_i_t_i}
    (\ell_i + 1) t_{i+1} = \Theta(n^{1 - \beta/4}).
  \end{equation}

  By repeated application of Lemma \ref{lemma:long_chain} we can find a collection $C_1^{4k-1}, \ldots, C_{t_{4k-1}}^{4k-1} \subseteq G[V_2]$ of pairwise vertex-disjoint $(r, \ell_{4k-1})$-chains. Let us elaborate briefly why this is indeed possible. Such chains occupy $t_{4k - 1} \cdot (r \ell_{4k - 1} + 1) < n / 32$ vertices. Thus if we greedily choose them one by one the set $W \subseteq V_2$ of unoccupied vertices in $V_2$ after every step is of size at least, say, $|W| \ge n/4$. Therefore, by \ref{prop:chain} we have that $G[W]$ satisfies  the assumption of Lemma \ref{lemma:long_chain} for any constant $\alpha > 0$, and consequently it contains an $(r, \ell)$-chain for $\ell < (1 - (r+2)\alpha) |W| / r$. As $\ell_{4k-1} = o(n)$, this proves our claim.

  Let $U_{4k - 1} = \bigcup_{i \in [t_{4k - 1}]} V(C_i^{4k - 1})$. For each $i = 4k - 2, \ldots, 1$, iteratively, let $C_1^i, \ldots, C_{t_i}^i \subset G[V_2] \setminus U_{i+1}$ be disjoint $(r, \ell_i)$-chains given by Lemma \ref{lemma:absorbing} for $C_1^{i+1}, \ldots, C_{t_{i+1}}^{i+1}$ (as $C_1', \ldots, C'_{t'}$) and $W_i = V_2 \setminus U_{i+1}$, and set $U_i = U_{i+1} \cup \bigcup_{j \in [t_i]} V(C_j^i)$. Let us verify that the conditions of Lemma \ref{lemma:absorbing} are met. First, $|W_i| = n/2 - |U_{i+1}|$ and
  $$
    |U_{i+1}| = \sum_{j = i+1}^{4k-1} t_j \cdot (r \ell_j + 1) < n/4.
  $$
  From \eqref{eq:ell_i_t_i} we have $(\ell_i + 1) t_{i+1} > n^{1 - \beta/2}$,  and as
  $$
    (t_i + t_{i+1})(r \ell_i + 1) < 2 t_i (r \ell_i + 1) < n / 16 < |W|/2
  $$
  we can indeed apply Lemma \ref{lemma:absorbing} in each iteration. 

  Finally, let $W_0 = V_2 \setminus U_1$. Apply Lemma \ref{lemma:absorbing} one last time with $\ell_0 = 0$, $t_0 = |W_0|/4$ and $C_1^1, \ldots, C_{t_1}^1$ (as $C_1', \ldots, C_{t'}')$. This is justified as $t_1=\Theta(n^{1-1/4k})$ and $t_0+t_1=o(n)$. The obtained $0$-chains are then just a set of vertices $C_0 \subseteq W_0$ with the property that for every $L_0 \subseteq C_0$ there exists a subset $L_1' \subseteq [t_1]$ such that the subgraph of $G$ induced by
  $$
    L_0 \cup \left( \bigcup_{j \in L_1'} V(C_j^1) \right)
  $$  
  contains a $K_r$-factor. 

  Next, we show that the set $C_0 \cup U_1$ has a strong absorbing property.
  \begin{claim} For any subset $L_0 \subseteq C_0$ such that $|L_0| + |U_1| \in r \mathbb{Z}$, the induced subgraph $G[L_0 \cup U_1]$ contains a $K_r$-factor.
  \end{claim}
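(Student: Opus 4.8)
The plan is to exploit the telescoping absorbing structure that has been built layer by layer. Fix $L_0 \subseteq C_0$ with $|L_0| + |U_1| \in r\mathbb{Z}$. The key point is that each application of Lemma~\ref{lemma:absorbing} in the construction of $U_i$ equipped the inner chains $C_1^i, \ldots, C_{t_i}^i$ with the property that for every choice of a subset $L$ of their indices there is a subset $L'$ of the indices of the next layer $C_1^{i+1}, \ldots, C_{t_{i+1}}^{i+1}$ so that the union of the corresponding chains spans a $K_r$-factor. So I would peel off the absorption one layer at a time: starting from $L_0 \subseteq C_0$, the last application of Lemma~\ref{lemma:absorbing} (with $\ell_0 = 0$) gives a set $L_1' \subseteq [t_1]$ such that $G[L_0 \cup \bigcup_{j \in L_1'} V(C_j^1)]$ has a $K_r$-factor. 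Then apply the absorbing property of layer $1$ to $L := [t_1] \setminus L_1'$ (the indices \emph{not} used in the factor so far) to obtain $L_2' \subseteq [t_2]$ with $G[(\bigcup_{j \notin L_1'} V(C_j^1)) \cup \bigcup_{j \in L_2'} V(C_j^2)]$ having a $K_r$-factor, and continue.

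Concretely, I would define inductively $L_1' \subseteq [t_1]$ as above, and for $i = 1, \ldots, 4k-2$ let $L_{i+1}' \subseteq [t_{i+1}]$ be the index set supplied by the absorbing property of the $i$-th layer applied to $[t_i] \setminus L_i'$. At each stage, the chains $\{C_j^i : j \in [t_i] \setminus L_i'\}$ together with $\{C_j^{i+1} : j \in L_{i+1}'\}$ carry a $K_r$-factor, and these vertex sets are disjoint from everything used in earlier stages. At the final layer $i = 4k-1$: after choosing $L_{4k-1}'$, the remaining chains $\{C_j^{4k-1} : j \in [t_{4k-1}] \setminus L_{4k-1}'\}$ form $(r, \ell_{4k-1})$-chains whose union, being a disjoint union of $r$-chains on a number of vertices divisible by... — and here is the point to be careful — we need their total vertex count to be divisible by $r$ so that they too admit a $K_r$-factor. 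Taking the union of all the $K_r$-factors produced across the layers, together with a $K_r$-factor on the leftover top layer, gives a $K_r$-factor of $G[L_0 \cup U_1]$, since by construction every vertex of $L_0 \cup U_1$ lies in exactly one of these pieces.

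The main obstacle is the bookkeeping of divisibility by $r$. An $(r,\ell)$-chain has $r\ell + 1$ vertices, so a union of chains spans a $K_r$-factor only when the relevant counts line up mod $r$; the absorbing property of Lemma~\ref{lemma:absorbing} is exactly what fixes this, because it lets us \emph{choose} which subset $L_{i+1}'$ of the next layer to absorb, effectively adjusting the residue. I would track the invariant that after absorbing into layer $i+1$, the union of all vertices used so far in factors has cardinality congruent to $|L_0| + |U_1|$ minus (vertices still "pending" in layers $\ge i+1$) modulo $r$; the hypothesis $|L_0| + |U_1| \in r\mathbb{Z}$ is what makes the final leftover divisible by $r$. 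One must also check that the size conditions of Lemma~\ref{lemma:absorbing} — namely $(\ell_i+1)t_{i+1} > n^{1-\beta/2}$ and $(t_i + t_{i+1})(r\ell_i+1) < |W_i|/2$ — which were already verified during the construction of the $U_i$, are the only inputs needed, so no new estimates are required. Finally I would note that property~\ref{prop:chain} (implied by neatness) is what underwrote every invocation of Lemma~\ref{lemma:absorbing}, so the claim holds for any $(\alpha,\beta,p,r)$-neat graph as in Lemma~\ref{lemma:neat_factor}.
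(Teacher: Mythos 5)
Your peeling argument through the layers is exactly the paper's proof: define $L_1'$ from the last application of Lemma~\ref{lemma:absorbing}, then repeatedly feed the unused indices $[t_i]\setminus L_i'$ into the absorbing property of the next layer. Your divisibility bookkeeping is also in the right spirit: since each $(r,\ell_{4k-1})$-chain has $r\ell_{4k-1}+1\equiv 1\pmod r$ vertices, the hypothesis $|L_0|+|U_1|\in r\mathbb{Z}$ forces the number $|L_{4k-1}|$ of leftover top-layer chains to be divisible by $r$.

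There is, however, a genuine gap at the final step. You assert that the leftover chains $\{C_j^{4k-1}: j\in[t_{4k-1}]\setminus L_{4k-1}'\}$ ``too admit a $K_r$-factor'' once their total vertex count is divisible by $r$. That is not automatic: a single $(r,\ell)$-chain has $\equiv 1\pmod r$ vertices and hence no $K_r$-factor, and a disjoint union of such chains need not have one either unless the chains can actually be linked by edges of $G$ --- divisibility alone buys nothing. The paper closes this by partitioning $L_{4k-1}$ into groups of size $r$ and, for each group $\{i_1,\ldots,i_r\}$, finding a copy of $K_r$ with one vertex in each removable set $R(C_{i_j}^{4k-1})$; by Observation~\ref{obs:canonical} this produces a $K_r$-factor of the union of those $r$ chains. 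The existence of such a copy of $K_r$ is exactly where property~\ref{prop:chain} enters one last time, and it applies because each removable set has size $\ell_{4k-1}+1>n^{1-1/4k}>n^{1-\beta}$. This final invocation of \ref{prop:chain} together with \ref{obs:canonical} is the ingredient missing from your write-up; everything else matches the paper.
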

  \begin{proof}
  Consider one such $L_0$ and let $L_1' \subseteq [t_1]$ be a subset such that
  $$
    L_0 \cup \left(\bigcup_{j \in L_1'} V(C_j^1) \right)
  $$
  contains a $K_r$-factor. We further take $L_1 = [t_1] \setminus L_1'$ and use the property guaranteed by Lemma \ref{lemma:absorbing} to obtain a subset $L_2' \subseteq [t_2]$ such that the subgraph of $G$ induced by 
  $$
    \left( \bigcup_{j \in L_1} V(C_j^1)  \right) \cup \left( \bigcup_{j \in L_2'} V(C_j^2) \right)
  $$
  contains a $K_r$-factor. Continuing this way, we obtain a subset $L_{4k-1}' \subseteq [t_{4k - 1}]$ such that the subgraph of $G$ induced by 
  $$
    L_0 \cup \bigcup_{i = 1}^{4k - 2} \left( \bigcup_{j \in [t_i]} V(C_j^i) \right)  \cup \left( \bigcup_{j \in L_{4k-1}'} V(C_j^{4k - 1}) \right) = (L_0 \cup U_1) \setminus \bigcup_{j \in L_{4k - 1}} V(C_j^{4k - 1})
  $$
  contains a $K_r$-factor, where $L_{4k - 1} = [t_{4k - 1}] \setminus L_{4k - 1}'$. As $|V(C_j^{4k - 1})| \equiv 1 (\textrm{mod } r)$ and $|L_0| + |U_1| \in r \mathbb{Z}$ we necessarily have $|L_{4k - 1}| \in r \mathbb{Z}$. Therefore, to complete a $K_r$-factor in $G[L_0 \cup U_1]$ it suffices to partition $L_{4k - 1}$ into groups of size $r$ and for each such group $\{i_1, \ldots, i_r\}$ find a copy of $K_r$ with one vertex in each $R(C_{i_1}^{4k - 1}), \ldots, R(C_{i_r}^{4k - 1})$ (see Observation \ref{obs:canonical}). The existence of such $K_r$ follows from \ref{prop:chain} and $|R_j^{4k - 1}| = \ell_{4k - 1} + 1 > n^{1 - 1/4k} > n^{1 - \beta}$. 
  \end{proof}

  We now use this absorbing property to find a $K_r$-factor in $G$. First, let $B \subseteq V_1 \cup (W_0 \setminus C_0)$ be the set of all vertices which are not part of chains and such that they have less than $|C_0|p/2$ neighbours in $C_0$. As $|C_0| \ge \alpha n$, we have $|B| < \log n / p \ll np$, by~\ref{prop:expand} and the lower bound on $p.$ By \ref{prop:in_nbr} and the assumption that every vertex has at least $np/8$ neighbours in $V_1$, we can iteratively take one vertex $v \in B$ at a time and find a copy of $K_r$ which contains $v$ and has all other vertices in $V_1 \setminus B$. This takes care of $B$. Furthermore, we can continue covering the remaining vertices in $V_1 \cup (W_0 \setminus C_0)$ (i.e.\ those which are not part of previously chosen $K_r$'s) with disjoint copies of $K_r$ as long as there are still at least $r n^{1 - \beta}$ vertices, by~\ref{prop:chain}. Let us denote the set of remaining vertices by $L$. With the absorbing property of $C_0 \cup U_1$ in mind, to find a $K_r$-factor of $G$ it now suffices to find vertex-disjoint copies of $K_r$, each of which contains one vertex from $L$ and the others from $C_0$. Whatever we are left with in $C_0$ is guaranteed to form a $K_r$-factor with $U_1$, thus we are done. Note that this is very similar with how we took care of $B$, however the main difference is that $L$ is significantly larger than $B$ and a simple greedy strategy might not work. Instead, we find the desired copies of $K_r$ using Haxell's matching theorem (Theorem~\ref{thm:haxell}). 

  For each $v \in L$ create an $(r-1)$-uniform hypergraph $H_v$ on the vertex set $C_0$ such that $\{v_1, \ldots, v_{r-1}\}$ forms a hyperedge if and only if $\{v, v_1, \ldots, v_{r-1}\}$ form $K_r$ in $G$. If we can find for each $v \in L$ a hyperedge $h_v \in E(H_v)$ such that all these hyperedges are pairwise vertex-disjoint, then we are done.
  To show that such edges exist it suffices to verify Haxell's criterium:
  \begin{equation} \label{eq:verify_haxell}
    \tau(\bigcup_{v \in I} H_v) \ge 2(r-1)|I|
  \end{equation}
  for every $I \subseteq L$. 
  Equivalently, for all subsets $I\se L$ and all $Z \subseteq C_0$ of size $|Z| \le 2(r-1)|I|$  there exists a copy of $K_r$ with one vertex in $I$ and all other vertices in $C_0 \setminus Z$.

  We consider two cases. Consider first the case when $|I| \le \log n /p$ and let $Z$ be some subset of $C_0$ of size at least $2r \log n / p$. As $L \cap B = \emptyset$, every vertex $v \in I$ has at least $|C_0|p/2 > np / 32$ neighbours in $V_1$, thus the subset $X = (N_G(v) \cap C_0) \setminus Z$ is of size at least $np/16$ (we used $np \gg \log n /p$ which follows from the lower bound on $p$). By \ref{prop:in_nbr} there exists a copy of $K_{r-1}$ in $X$. Suppose now that $|L| \ge |I| > \log n /p$ and consider a subset $Z \subseteq C_0$ of size $2r |L| < n/32$. The set $Y = C_0 \setminus Z$ is then of size at least $n/16$. Thus, there exists a vertex $v \in I$ with at least $|Y|p/2 \ge np/32$ neighbours in $Y$,  by \ref{prop:expand}. By \ref{prop:in_nbr} such a neighbourhood contains a copy of $K_{r-1}$, which gives us a desired copy of $K_r$. This finishes the proof.
\end{proof}

It remains to prove Lemma \ref{lemma:absorbing}.

\begin{proof}[Proof of Lemma \ref{lemma:absorbing}]
  By repeated application of Lemma \ref{lemma:long_chain} we can find a collection of $t + t'$ disjoint $(r, \ell)$-chains $C_1, \ldots, C_{t + t'} \in G[W]$. Clearly, for this we could have allowed $W$ to be much smaller than $n/8$, thus this constraint is only for convenience. For each $i \in [t + t']$ we create an auxiliary $(r-1)$-uniform hypergraph $H_i$ on the vertex set $V' = [t']$ by adding a hyperedge $\{j_1, \ldots, j_{r-1}\}$ if and only if there exists a copy of $K_r$ in $G$ with one vertex in each $R(C_i), R(C_{j_1}'), \ldots, R(C_{j_{r-1}}')$. Note that for every such hyperedge the subgraph of $G$ induced by 
  $$
    V(C_i) \cup V(C_{j_1}') \cup \ldots V(C_{j_{r-1}}')
  $$
  contains a $K_r$-factor (see Observation \ref{obs:canonical}).

  We first show that there exists a subset $B \subseteq [t + t']$ of size at most $|B| \le t'$ such that for every subset $J \subseteq [t + t'] \setminus B$ of size $|J| \le t'/8r$ we have
  \begin{equation} \label{eq:2rJ}
    \tau(\bigcup_{i \in J} H_i) \ge 2r|J|.
  \end{equation}
  Initially, set $q = 0$ and $B = \emptyset$. As long as $|B| < t'/8r$ and there exists a subset $J \subseteq [t + t'] \setminus B$ of size $|J| \le t'/8r$ that violates \eqref{eq:2rJ} set $B = B \cup J$, $J_{q+1} = J$ and increase $q$ by 1. Suppose towards a contradiction that for some $q$, $|B| \ge t'/8r$, and let $q$ be the smallest such index. Then $|B| \le t'/4r$ as $|J_q| \le t'/8r$. Moreover, we have
  $$
    \tau(\bigcup_{i \in B} H_i) \le \sum_{j = 1}^q \tau(\bigcup_{i \in J_j} H_i) < \sum_{j = 1}^q 2 r |J_j| = 2r|B| \le t'/2.
  $$
  This implies that there exists a set $\tilde B \se V'$ of size at most $t'/2$ such that every hyperedge $h\in\bigcup_{i\in B}H_i$ intersects $\tilde B$. In other words, there exists $B' \subseteq V'$ of size $|B'| \ge t'/2$ such that there is no copy of $K_r$ in $G$ with one vertex in $\bigcup_{i \in B} R(C_i)$ and the others in each $R(C_{j_2}'), \ldots R(C_{j_{r}}')$ for some distinct $j_2, \ldots, j_{r} \in B'$. Split $B'$ arbitrarily into $r-1$ sets of nearly equal size, denoted by $B_2', \ldots, B_{r}'$, each of size at least $t'/2r$, and set $X_j = \bigcup_{i \in B'_j} R(C_i')$ for $j=2,\ldots,r$. Then each such $X_j$ is of size  at least
  $$
  (\ell' + 1) \frac{t'}{2r} > (\ell + 1) \frac{t'}{2r}.
  $$
  On the other hand, $X_1$ defined as $\bigcup_{i \in B} R(C_i)$ is of size at least 
  $$
    |X_0| \ge (\ell + 1) |B| \ge (\ell + 1) \frac{t'}{8r}.
  $$
  Thus we have $|X_i| \ge n^{1 - \beta}$ by the assumption of the lemma, with room to spare. By \ref{prop:chain} there exists a copy of $K_r$ intersecting each $X_i$, which is a contradiction. Therefore we have that there exists a set  $|B|$ of size less than $t'/8r$ and every subset $J \subseteq [t + t'] \setminus B$ of size $|J| \le t' / 8r$ satisfies \eqref{eq:2rJ}.

  Take an arbitrary $t$-subset $I \subseteq [t + t'] \setminus B$ and relabel $\{C_i\}_{i \in I}$ as $\{C_i\}_{i \in [t]}$. We show that such $(r, \ell)$-chains have the desired property. Consider some $L \subseteq [t]$. First, let $S \subseteq L$ be a smallest subset such that the subgraph of $G$ induced by
  $$
    \bigcup_{i \in L \setminus S} V(C_i)
  $$
  contains a $K_r$-factor. We claim that $|S| < t'/8r$. Suppose towards a contradiction that $|S| \ge t' / 8r$. Consider an equipartition $S = S_1 \cup \ldots \cup S_r$. Then each set $X_i = \bigcup_{j \in S_i} R(C_j)$ is of size 
  $$
    |X_i| \ge (\ell + 1) \frac{t'}{8r^2} > n^{1 - \beta}
  $$
  thus by \ref{prop:chain} there exist a copy of $K_r$ intersecting each $X_i$. Therefore there exists distinct $i_1, \ldots, i_r \in S$ and a copy of $K_r$ intersecting each $R(C_{i_j})$. By Observation \ref{obs:canonical} this is a contradiction with the minimality of $S$. Finally, as $|S| \le t'/8r$ we have that every subset $J \subseteq S$ satisfies \eqref{eq:2rJ} thus we can choose $h_i \in H_i$ for each $i \in S$ such that these edges are pairwise vertex disjoint, by Theorem~\ref{thm:haxell}. Let $L' = \bigcup_{i \in S} h_i$. The construction of such hyperedges implies that the subgraph of $G$ induced 
  $$
    \bigcup_{i \in L} V(C_i) \cup \bigcup_{i \in L'} V(C_i')
  $$
  contains a $K_r$-factor, as desired.
\end{proof}

\section{Breaker's strategy}

The idea behind the proof is that Breaker prevents a fixed vertex $v$ from being in a copy of $K_r$ in Maker's graph. To illustrate why this could be possible, fix a vertex $v\in [n]$ and assume for now that Maker at first only claims edges incident to $v$, as long as there is at least one such unclaimed edge. Breaker responds by claiming $b$ edges incident to $v$ in every round as well, so that at the end of this first stage of the game the set of neighbours of $v$ in Makers graph, denoted by $N_M(v)$, has size roughly $n/b$. For the rest of the game, Breaker only needs to prevent Maker from claiming a copy of $K_{r-1}$ in $N_M(v)$, which is possible if $b\ge C (n/b)^{2/r}$ for some constant $C$ which is independent of $n$, by the result of Bednarska and \L uczak~\cite{bl2000}; or equivalently if $b\ge C n^{2/(r+2)}$ (with a different constant $C$). 

If Maker indeed first claims as many edges incident to $v$ as possible, this would be the end of the proof. Of course, we cannot rely on this assumption. The way to counterfeit it is to divide the attention of Breaker into two: the first $b/2$ claimed edges are incident to $v$, thus preventing its neighbourhood in the Maker's graph from becoming larger than $2n/b$; the second $b/2$ claimed edges lie inside its current neighbourhood and prevent a copy of $K_{r-1}$. Crucially, the board of the game where we want to use the strategy $\cS$ from~\cite{bl2000} will be revealed over time only (as the neighbourhood of $v$ in Maker's graph increases). It turns out that the proof of a static version of the game (where the whole board is `visible') can be turned into a proof of a suitable dynamic version (where the board is revealed over time). Unfortunately, none of the ingredients of the proof is black-boxable so we need to dig into each part.  

Let us introduce necessary notation in a bit more generality than needed for our application. Let $\cH$ be a given hypergraph, say on vertex set $V(\cH)$ and edge set $E(\cH)$, and let $m$ and $b$ be integers. We define the {\em dynamic-board $(\cH,m,b)$-game} as follows. 
Let $V_0 = \emptyset$. The two players Maker and Breaker play in rounds, with Maker going first. For $i\ge 0$, suppose that $i$ rounds have been played and that $V_i\se V(\cH)$ is defined. In round $i+1$, Maker may play either according to {\em Option (a)} in which she claims up to $m$ elements of $V_i$ and sets $V_{i+1}=V_i$ (in case there are less than $m$ elements she claims all of them), or according to {\em Option (b)} in which she chooses elements $v_1,\ldots,v_{\ell}$ (for some $\ell\ge 1$) from $V(\cH) \sm V_i$ and sets $V_{i+1} = V_i\cup\{v_1,\ldots,v_{\ell}\}$ (but Maker does not claim edges in a round when she enlarges the board). In case Option (a) is not possible, Maker is forced to play Option (b), unless it is the end of the game. Afterwards, Breaker claims (up to) $b$ elements in $V_{i+1}$. In case there are less than $b$ unclaimed elements in $V_{i+1}$, Breaker claims all of them. Maker wins if at the end of the game she has claimed all elements of some hyperedge $H\in E(\cH)$. Otherwise, Breaker wins. 

Given a (fixed) graph $H$ and a complete graph $K_n$, we define a {\em dynamic $b$-biased $H$-game} as the $(\cH, 1, b)$-game where the vertex set of $\cH$ are the edges of $K_n$, and the hyperedges of $\cH$ correspond to edge sets of $K_n$ which form a copy of $H$. The following theorem is a generalisation of the mentioned result by Bednarska and \L uczak \cite{bl2000} to the dynamic setting. For the definition of $m_2(H)$, see Theorem \ref{thm:KLR}.

\begin{theorem}\thlab{ourDynamicBreaker}
For every graph $H$ which contains at least three non-isolated vertices there exists a constant $C>0$ such that Breaker has a winning strategy in the dynamic $b$-biased $H$-game played on $K_n$ if $b\ge Cn^{1/m_2(H)}$.
\end{theorem}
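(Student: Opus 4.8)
The plan is to adapt the classical Breaker strategy of Bednarska and \L uczak \cite{bl2000} so that it copes with the board being revealed gradually, and to do so without invoking their result as a black box. Recall that in the static setting the standard approach has two layers: first, Breaker maintains that Maker's graph avoids a copy of $H$ by a potential (weight) argument based on partial copies of $H$, in the spirit of the Erd\H os--Selfridge / Beck criterion; and second, one shows that if $b \ge C n^{1/m_2(H)}$ then this potential stays bounded throughout the game, essentially because a random graph with $\Theta(n^2/b)$ edges contains only $o$ of the "dangerous" configurations one has to worry about at each step. The first thing I would do is set up the dynamic analogue of this potential. At the end of round $i$ the active board is $V_i$, and all of Maker's and Breaker's claimed elements lie in $\bigcup_j V_j = V_i$; so the notion of a "Maker-occupied, Breaker-free partial copy of $H$" inside the current board makes sense verbatim. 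Breaker's response in round $i+1$ is to claim the $b$ unclaimed elements of $V_{i+1}$ that minimise the increase of this potential, exactly as in the static game, just restricted to the (possibly enlarged) board $V_{i+1}$.

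The key observation that makes the dynamic version go through is that \emph{enlarging the board only in Maker's Option (b) rounds does not help Maker}: in such a round Maker claims nothing, so the potential does not increase due to Maker, and afterwards Breaker gets a free move on the enlarged board to knock the potential back down. Concretely, I would prove a one-round potential inequality of the form: if Maker plays Option (a), claiming one new element (in the $m=1$, $b$-biased specialisation we need), the potential increases by at most the weighted number of partial copies through that element; if Maker plays Option (b), the potential is unchanged before Breaker moves. Then Breaker's greedy choice on $V_{i+1}$ removes at least a $(b/\#\{\text{unclaimed elements of }V_{i+1}\})$-fraction of the weight of every partial copy, by an averaging argument — this is the only place the bias $b$ enters, and it is exactly the step where $b \ge C n^{1/m_2(H)}$ is used. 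Since the board $V_i$ has at most $\binom n2$ elements throughout, the denominator is never larger than it would be in the static game on $K_n$, so the static potential bound carries over. The bookkeeping is cleanest if one tracks, as in \cite{bl2000}, a weight $w(\text{partial copy}) = \lambda^{-(\text{number of claimed elements})}$ for a suitable $\lambda = \lambda(b,n)$, summed over subgraphs of $K_n$ isomorphic to subgraphs of $H$ with at least one non-isolated vertex, and shows this total weight starts at $O(1)$-smaller-than-one (after accounting for the initial empty configuration having weight equal to the number of copies of $H$ in $K_n$, times $\lambda^{-e(H)}$) and never exceeds $1$, so no full copy of $H$ is ever completed.

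The one genuinely new subtlety, and the step I expect to be the main obstacle, is controlling the \emph{number of partial copies of $H$ that become "activated" when the board grows}. In the static game the whole board $\binom{[n]}2$ is present from the start, and the potential at time $0$ is $\binom n2$-ish copies of $H$ each weighted by $\lambda^{-e(H)}$; but in the dynamic game Maker controls which elements enter $V_i$ and when, and in particular she could try to keep the board small while concentrating her edges, or to enlarge it in a way that suddenly exposes many new almost-complete copies. The resolution is that the weight of a partial copy is counted only once it lies entirely within the current board, and the potential bound must be stated as an invariant that holds no matter how the board has grown — i.e. one proves $\Phi_{i+1} \le \Phi_i$ by splitting the new copies (those newly inside $V_{i+1}$ but not $V_i$) into those containing a freshly added vertex, whose weight is $\lambda^{-(\le 1)} \le \lambda^0=1$ times a counting factor that is still dominated by the static total because $V_{i+1}\subseteq\binom{[n]}2$, versus those already present. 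Making this airtight requires redoing the weight-shifting computation of \cite{bl2000} with the board-restriction in place, which is why the authors warn that "none of the ingredients of the proof is black-boxable." Once that invariant is established, the proof concludes exactly as in the static case: a full copy of $H$ would force $\Phi \ge \lambda^{-0}=1 > \Phi_{\text{final}}$, contradiction, so Breaker wins whenever $b \ge C n^{1/m_2(H)}$ with $C$ depending only on $H$.
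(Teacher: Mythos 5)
There is a fundamental gap: you apply the Erd\H{o}s--Selfridge/Beck potential directly to (partial) copies of $H$, and that cannot yield the threshold $n^{1/m_2(H)}$. With Maker bias $1$ and Breaker bias $b$, any such criterion needs the initial potential $\Theta(n^{v(H)})\cdot(1+b)^{-e(H)}$ to be below $1$, which forces $b\gtrsim n^{v(H)/e(H)}$. For $b=Cn^{1/m_2(H)}$ and $m_2$-maximal $H$ the initial potential is $\Theta\bigl(n^{(2e(H)-v(H))/(e(H)-1)}\bigr)$, which diverges whenever $H$ contains a cycle; already for $H=K_3$ your criterion demands $b\gtrsim n$ rather than $\sqrt n$. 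The exponent $1/m_2(H)$ does not come from the averaging step where ``the bias enters''; it comes from the combinatorics of the auxiliary structures in Bednarska--\L{}uczak's argument, which your proposal gestures at (``dangerous configurations'') but never uses. The actual proof is two-layered: the dynamic potential lemma (Lemma~\ref{ourLemma5}, which is the genuinely new ingredient and which you correctly identify as needed) is applied \emph{not} to copies of $H$ but to $t$-clusters and to simple $t$-fans of $\bar H$-graphs (copies of $H$ minus one edge); via Lemmas~\ref{ourLemma6}, \ref{ourLemma7} and \ref{ourLemma9} this shows Maker never owns a dangerous $(b/2)$-fan, and the \emph{other} half of Breaker's bias is spent directly claiming the at most $b/2$ closing pairs $v_iw_i$ of any dangerous fan --- that blocking step, not the potential itself, is what prevents a copy of $H$ from being completed. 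Without the fans/flowers/clusters and the bias-splitting, the claimed bound is out of reach.

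A secondary problem is your treatment of the growing board. Counting a partial copy ``only once it lies entirely within the current board'' and bounding the jump upon activation does not work as sketched: a hyperedge newly entering the board may already have all but one of its elements claimed by Maker and hence carries weight $\lambda^{-1}$, which is $\lambda^{e(H)-1}$ times larger than the $\lambda^{-e(H)}$ you budget per copy in the static total, so the potential can blow up at an Option~(b) move. The paper avoids this entirely by summing the potential over \emph{all} hyperedges of $\cH$ from the outset, independently of the current board $V_i$; the only dynamic feature is that Breaker greedily maximises over unclaimed elements of $V_{r+1}$, and monotonicity of the potential uses precisely that Maker's Option~(a) choices in round $r+1$ are confined to $V_r\subseteq V_{r+1}$, so Breaker's earlier greedy picks already dominate them. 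Your observation that an Option~(b) round costs Maker a move and cannot increase the potential is correct and is indeed how the paper handles board growth, but it only works in conjunction with a board-independent potential.
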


We may take $C$ sufficiently large such that the theorem statement is true for small $n$, so that in the proof we can safely assume that $n$ is as large as needed. The proof of Theorem~\ref{ourDynamicBreaker} proceeds along the lines of~\cite{bl2000}. We sketch the argument in the next section, leaving out calculations that are identical to those in~\cite{bl2000}. 

Theorem \ref{ourDynamicBreaker} is all we need to describe Breaker's strategy for isolating a vertex $v$ from being in a copy of $K_r$. 

\begin{proof}[Proof of Theorem \ref{thm:main} (ii)]
Let $r\ge 4$, let $C$ be a large enough constant, let $n$ be an integer and let $b\ge Cn^{2/(r+2)}$. Let $v$ be a fixed vertex of $K_n$. We show that Breaker has a strategy in the $b$-biased Maker--Breaker game played on the edge set of $K_n$ to prevent Maker from claiming a copy of $K_r$ that contains the vertex $v$. Consequently, Maker's graph does not contain a $K_r$-factor. Before we present the strategy of Breaker we describe an auxiliary game that Breaker simulates in parallel. 

Let $T$ be a set of size $2n/b$, disjoint from $V(K_n)$. By \thref{ourDynamicBreaker}, if $b/2\ge C'(2n/b)^{1/m(K_{r-1})}$ then Breaker has a winning strategy $\cS$ in the dynamic $(b/2)$-biased $K_{r-1}$-game played on $K_T$, the complete graph on the vertex set $T$. Equivalently, $b\ge Cn^{2/(r+2)}$ for suitable $C$. 

We now describe the strategy of Breaker in the $b$-biased Maker--Breaker game played on the edge set of $K_n$. Suppose that $i \ge 0$ rounds have been played already. Let $M$ and $B$ denote the graphs formed by Maker's edges and by Breaker's edges, respectively (we suppress dependence on $i$ for clarity of presentation). Breaker maintains the property that every vertex $w\in N_M(v)$ has a (unique) corresponding vertex $t_w\in T$ such that an edge $uw$ in $N_M(v)$ belongs to Maker's (Breaker's) graph if and only if $t_ut_w$ belongs to Maker's (Breaker's) graph in the auxiliary dynamic $b$-biased $K_{r-1}$-game played on $K_T$. Clearly, this is the case before the first round of the game, and we show that Breaker can maintain such a correspondence throughout the game. Set $T_{i} = \{t_w:w\in N_M(v)\}$.  

Let $xy$ denote the edge that Maker claims in round $i+1$. Then Breaker claims up to $b/2$ edges incident to $v$ including $xv$ or $yv$ if those edges are not claimed yet by either of the players. If Breaker has claimed $b' < b/2$ edges and there are no more unclaimed edges incident to $v$, then he claims $b/2 - b'$ arbitrary edges (note that additional edges do not hurt Breaker). For the remaining $b/2$ edges in round $i+1$ we distinguish between three cases (where the latter two are similar). In Case~1, assume that $x\not\in N_M(v) \cup \{v\}$ or $y\not\in N_M(v) \cup \{v\}$. Then Breaker claims $b/2$ arbitrary edges. In Case~2.1, assume that $x=v$ (the case $y=v$ is analogous). Let $t\in T\sm T_{i}$ and set $t_y= t$. In the auxiliary dynamic $K_{r-1}$-game, Breaker pretends that (the auxiliary) Maker plays according to Option (b) and adds the elements $\{t_y  t_w: w\in N_M(v)\}$ to the board (recall that the vertices in the hypergraph corresponding to that game are the edges of $K_T$). In Case~2.2, assume that $x,y\in N_M(v)$. Then Breaker pretends that in the auxiliary dynamic $K_{r-1}$-game Maker plays according to Option (a) and claims the edge $t_xt_y$. In either of Case~2.1 or~2.2, the strategy $\cS$ in the auxiliary game gives $b/2$ edges $e_1,\ldots, e_{b/2} \in E(K_T)$ for Breaker to claim in the auxiliary board. Let $f_1,\ldots,f_{b/2}$ be the corresponding edges in $N_M(v)$, that is $f_i$ is the edge with endpoints $w_i$ and $u_i$ such that $e_i$ has endpoints $t_{w_i}$ and $t_{u_i}$. Breaker then claims $e_1,\ldots, e_{b/2}$ in the auxiliary game and $f_1,\ldots,f_{b/2}$ in the real game. 

We claim that this is indeed a winning strategy. First note that $|N_M(v)|\le 2n/b$ since Breaker claims $b/2$ of the $n-1$ total edges incident to $v$ in every round. Thus, the set $T$ is large enough so that Breaker can indeed maintain an injective map $w\mapsto t_w$ for $w\in N_M(v)$. Furthermore, it is clear from the strategy description that a Maker/Breaker edge in $N_M(v)$ corresponds to a Maker/Breaker edge in the auxiliary game in $T$. Finally, since $\cS$ is a strategy for Breaker to prevent Maker in the auxiliary $b/2$-biased game to claim a copy of $K_{r-1}$ this implies that Breaker can indeed prevent Maker from claiming a copy of $K_{r-1}$ in $N_M(v)$, i.e.~the vertex $v$ is not in a copy of $K_r$ in Maker's graph.
\end{proof}

%%%%%%%%%%%%%%%%%%%%%%%%%%%%%%
%
%	DYNAMIC BECK
%
%%%%%%%%%%%%%%%%%%%%%%%%%%%%%%

\subsection{Proof of Theorem \ref{ourDynamicBreaker} (sketch)}
\label{proof:ourLemma5}

The following is a dynamic-board variant of \cite[Lemma 5]{bl2000}. We switch notation from $m$ to $p$ and from $b$ to $q$ for the bias of Maker and Breaker, respectively, to be consistent with the literature. 

\begin{lemma}\thlab{ourLemma5} In every dynamic-board $(\cH,p,q)$-game Breaker has a strategy such that at the end of the game at most $(1+q) f(\cH,p,q)$ edges of the hypergraph $\cH$ have all their vertices claimed by Maker, where $f(\cH,p,q)=\sum_{H\in E(\cH)}(1+q)^{-|H|/p}.$ 
\end{lemma}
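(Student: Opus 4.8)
The plan is to adapt the classical Erdős–Selfridge–type potential (weight) argument, as generalized to the biased setting by Beck, to the dynamic-board situation. For a hypergraph $\cH'$ obtained from $\cH$ by restricting to the currently available structure, and for a current partial play, assign to each hyperedge $H\in E(\cH)$ still "alive" (i.e.\ containing no Breaker-claimed vertex) the weight $(1+q)^{-(|H|-m_H)/p}$, where $m_H$ is the number of vertices of $H$ already claimed by Maker; dead hyperedges get weight $0$. The total potential $\Phi$ is the sum of these weights. Breaker's strategy is the greedy one: among the available vertices, claim the $q$ vertices whose removal (killing all alive hyperedges through them) decreases $\Phi$ the most, i.e.\ claim a set maximizing the total weight of alive hyperedges killed. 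The key inequality to verify is that after Maker's move and Breaker's response the potential does not increase beyond its value at the start of that round — more precisely, that $\Phi$ is a supermartingale-style quantity: Maker claiming $p$ vertices multiplies the weight of each affected alive hyperedge by at most $(1+q)^{1/p}$ each time a vertex is added, so Maker's move multiplies $\Phi$ by at most $(1+q)$ (using $|{\rm Maker's\ claims}|\le p$ and $(1+q)^{1/p}$ per claim, crudely $\le$ additive increase bounded by $q$ times current potential via convexity); then Breaker's greedy move removes at least a $q/(q+1)$-fraction... — here one runs the standard averaging argument showing the greedy Breaker kills enough weight to cancel Maker's increase.

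The new ingredient is Option (b): when Maker enlarges the board $V_i\to V_{i+1}$, new vertices of $\cH$ become available and therefore new hyperedges may transition from "not yet on the board" to "alive". The crucial observation is to set up the bookkeeping so that hyperedges that are not yet fully on the board are assigned their *full* target weight $(1+q)^{-|H|/p}$ from the very beginning — that is, $\Phi$ is defined as a sum over *all* $H\in E(\cH)$, where a hyperedge with a vertex outside the current board $V_i$ simply has $m_H=0$ and contributes $(1+q)^{-|H|/p}$, and a hyperedge with a Breaker vertex contributes $0$. With this convention, expanding the board does not change $\Phi$ at all: no hyperedge changes its claimed-count and none dies. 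So Option (b) rounds are "free" for the potential, and Breaker's claim of up to $q$ vertices in such a round can only decrease $\Phi$. Hence the supermartingale property is maintained in both types of rounds, and since the initial potential is exactly $f(\cH,p,q)=\sum_{H}(1+q)^{-|H|/p}$ (as at the start $V_0=\emptyset$ so every hyperedge has $m_H=0$), and an edge fully claimed by Maker at the end contributes weight $(1+q)^{0}=1$, the final potential is at least the number of such edges. The factor $(1+q)$ in the statement accounts for the single half-round slack: Maker moves last before the potential is measured, or equivalently one bounds the number of Maker-won edges by $(1+q)$ times the potential just before Maker's final move. Thus at most $(1+q)f(\cH,p,q)$ hyperedges end up entirely Maker-claimed.

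The main obstacle I expect is handling Option (a) when fewer than $p$ available vertices remain, and more importantly ensuring the greedy Breaker step is legal — Breaker must claim vertices only in $V_{i+1}$, and in an Option (b) round the newly added vertices are available to Breaker, which is exactly what we need; one has to check that Breaker never "wants" to claim a vertex outside the board (he doesn't, since killing off-board hyperedges still helps, and all his targets lie in $V_{i+1}$ by the rules). A second, more technical point is the precise constant in the per-round increase: one needs the elementary inequality that if a positive quantity is multiplied by $(1+q)^{1/p}$ at most $p$ times then it grows by at most a factor $1+q$, i.e.\ $((1+q)^{1/p})^{p}=1+q$, which is clean here, but combined with Breaker's greedy decrease the cancellation must be argued via the standard "there exists a vertex whose alive-weight through it is at least a $1/(|V_{i+1}\cap{\rm available}|)$-fraction" averaging, iterated $q$ times; this is identical to \cite{bl2000} and can be cited rather than reproduced.
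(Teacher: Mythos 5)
Your proposal is correct and follows essentially the same route as the paper: the Beck-style potential $\sum_{H:\,H\cap B=\emptyset}(1+q)^{-|H\setminus M|/p}$ summed over \emph{all} hyperedges (so that board enlargement leaves the potential unchanged), a greedy Breaker maximizing the weight killed, and the observation that in an Option~(a) round Maker's picks come from the very board Breaker has just played on, so the standard domination argument goes through. The only cosmetic difference is where you place the uncancelled factor $(1+q)$ (last Maker move versus the paper's first), which is immaterial.
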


The proof is a simple adaptation of the potential function technique as introduced by Erd\H{o}s and Selfridge~\cite{es1973} that was generalised by Beck~\cite{b1982} to biased Maker--Breaker games. We are unaware of such a dynamical-board variant thus the full proof follows. We follow notation and strategy of the proof of \cite[Theorem 20.1]{BeckBook}. 

\begin{proof}[Proof of \thref{ourLemma5}]
Let $\cH$, $p$, $q$ be as in the lemma and let $\mu$ be defined by $1+\mu = (1+q)^{1/p}$. 
Given two disjoint subsets $M$ and $B$ of the board $V=V(\cH)$ and an element $z\in V$ set  
\begin{align*}
\Phi(M,B) &= \sum_{H\in \cH: H\cap B =\emptyset} (1+\mu)^{-|H\sm M|},\text{ and}\\
\Phi(M,B,z) &= \sum_{z\in H\in \cH: H\cap B =\emptyset} (1+\mu)^{-|H\sm M|}
\end{align*}
and note straight away the following inequalities:
\begin{align}
\Phi(M\cup\{e\},B,z)&\le (1+\mu)\Phi(M,B,z),\lab{T1}\\
\Phi(M,B\cup\{e\},z)&\le \Phi(M,B,z).\lab{T2}.
\end{align}
For integers $r$ and $j$, let 
$b_r^{(j)}$ be the $j^{\mathrm{th}}$ element that Breaker picks in round $r$, and let $m_r^{(j)}$ be the $j^{\mathrm{th}}$ element that Maker picks in round $r$ if she decides to play according to Option (a) and pick elements in $V_{r-1}$ rather than enlarging $V_r$ (Option (b)). 
Furthermore, let $M_r$ and $B_r$ be the set of all elements of Maker and of Breaker, respectively, {\em after} round $r$, 
and let 
$M_{r,j}=M_r\cup \{m_{r+1}^{(1)},\ldots,m_{r+1}^{(j)}\}$ and 
$B_{r,j}=B_r\cup \{b_{r+1}^{(1)},\ldots,b_{r+1}^{(j)}\}$ (assuming that Maker/Breaker has claimed at least $j$ elements in round $r+1$). 

We now describe Breaker's strategy in round $r$. If there are less than $q$ unclaimed elements in $V_{r+1}$, then Breaker claims all of them. Otherwise, for every $1\leq j\le q$, sequentially, Breaker calculates $\Phi(M_r,B_{r-1,j-1},z)$ for every unclaimed element $z\in V_r\sm(M_r\cup B_{r-1,j-1})$ and claims the element $b_r^{(j)}$ which maximises this expression. Note that here we chose the element $b_r^{(j)}$ in $V_r$, and not in the whole board $V$. If, for some $j$, there are no unclaimed elements, then it is the end of Breaker's turn.

The crucial part of the potential function technique in positional games is to show that the {\em potential} $\Phi(M_{r+1},B_r)$ is decreasing (if evaluated after Makers move). But this is now straight-forward along the lines of the proof in \cite{BeckBook}. The only thing we have to notice is that in round $r+1$, if Maker chooses to claim elements in $V_r$, then their choices are on the same sub-board where Breaker claimed their elements in round $r$. 

\begin{claim}
For all $r\ge 1$, 
$\Phi(M_{r+1},B_r)\le \Phi(M_r,B_{r-1})$.
\end{claim}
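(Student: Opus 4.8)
$\Phi(M_{r+1}, B_r) \le \Phi(M_r, B_{r-1})$.

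The plan is to decompose the change in potential over a full round (Maker's move followed by Breaker's move) and to track it step by step. First I would split the round $r+1$ into Maker's sub-moves and Breaker's sub-moves. If Maker plays Option (b) in round $r+1$ (enlarging the board from $V_r$ to $V_{r+1}$), then $M_{r+1} = M_r$ and no new vertices are claimed by Maker, so $\Phi$ is unchanged by Maker's move; I would then only need to analyse Breaker's $q$ picks in round $r+1$, which I address below. So assume Maker plays Option (a), claiming $m_{r+1}^{(1)}, \ldots, m_{r+1}^{(s)}$ with $s \le p$, all lying in $V_r$. By applying~\eqref{T1} at most $p$ times,
\[
  \Phi(M_{r+1}, B_{r}) = \Phi(M_r \cup \{m_{r+1}^{(1)}, \ldots, m_{r+1}^{(s)}\}, B_r) \le (1+\mu)^{s} \Phi(M_r, B_r) \le (1+\mu)^{p} \Phi(M_r, B_r) = (1+q)\Phi(M_r, B_r),
\]
using $1+\mu = (1+q)^{1/p}$. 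So it suffices to show that Breaker's $q$ picks in round $r$ drive the potential down by a factor of at least $(1+q)^{-1}$: that is, $\Phi(M_r, B_r) \le (1+q)^{-1} \Phi(M_r, B_{r-1})$, since chaining this with the display above gives $\Phi(M_{r+1}, B_r) \le \Phi(M_r, B_{r-1})$ as required. (The case of fewer than $q$ available elements only helps Breaker, as each pick can only decrease or preserve $\Phi$ by~\eqref{T2}.)

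Next I would analyse Breaker's $j$-th pick in round $r$. The key point, emphasised in the sketch, is that because Maker's Option-(a) picks in round $r$ lived in $V_{r-1}$ and Breaker's greedy choice ranges over the \emph{same} sub-board $V_r$ (equivalently, over $V_{r-1}$ at the relevant moment, after any Option-(b) enlargement is accounted for), the averaging argument goes through as in the static case. Concretely, when Breaker selects $b_r^{(j)}$ greedily to maximise $\Phi(M_r, B_{r-1,j-1}, z)$ over unclaimed $z$ in the current sub-board, the danger value $\Phi(M_r, B_{r-1,j-1}, b_r^{(j)})$ is at least the average of $\Phi(M_r, B_{r-1,j-1}, z)$ over all such $z$; and a standard double-counting (each hyperedge $H$ not yet killed by Breaker and not yet fully owned by Maker contributes $(1+\mu)^{-|H \setminus M_r|}$ to $\Phi(M_r, B_{r-1,j-1})$ exactly $|H \setminus (M_r \cup B_{r-1,j-1})|$ times across the $z$'s, while Maker has claimed at most a bounded number of elements of $H$ since the board was revealed) lets one lower-bound the removed potential. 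Summing the bound $\Phi(M_r, B_{r-1,j}) \le \Phi(M_r, B_{r-1,j-1}) - \mu \cdot (\text{appropriate fraction})$ over $j = 1, \ldots, q$, exactly as in the proof of \cite[Theorem 20.1]{BeckBook}, yields $\Phi(M_r, B_r) \le (1+q)^{-1}\Phi(M_r, B_{r-1})$, which is what we needed.

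The main obstacle — and the only place where the dynamic setting genuinely differs from \cite{BeckBook} — is verifying that the greedy/averaging step is not spoiled by the board having been enlarged over time. Here I would use the structural invariant of the dynamic game: whenever Maker exercises Option (b) and adds new vertices, she claims nothing in that round, so no hyperedge has vertices claimed by Maker before those vertices entered the board; and when she exercises Option (a), her picks are confined to $V_{r-1}$, which is exactly the sub-board over which Breaker just optimised in round $r$. This ensures that the quantity $|H \setminus M_r|$ appearing in the exponent behaves monotonically in the way the potential argument requires, and that Breaker never "wastes" picks on a part of the board Maker cannot yet touch. Once this invariant is in hand, every remaining estimate is line-for-line identical to the static proof, so I would simply cite \cite{BeckBook} for the routine computation and only spell out the averaging inequality on the revealed sub-board.
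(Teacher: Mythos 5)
Your opening reduction is where the proof breaks. You bound Maker's move by $\Phi(M_{r+1},B_r)\le(1+\mu)^p\Phi(M_r,B_r)=(1+q)\Phi(M_r,B_r)$ --- correct but too lossy --- and then need the compensating statement $\Phi(M_r,B_r)\le(1+q)^{-1}\Phi(M_r,B_{r-1})$, i.e.\ that Breaker's $q$ greedy picks shrink the potential by a multiplicative factor of $(1+q)$. That statement is false in general. A hyperedge $H$ with $H\subseteq M_r$ contributes a permanent $+1$ to $\Phi$ that no Breaker pick can remove (it appears in $\Phi(\cdot,\cdot,z)$ for no unclaimed $z$), so once such a hyperedge exists the potential need not drop at all, let alone by a factor of $(1+q)$. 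Even setting that aside, your averaging/double-counting step only shows that the greedy pick removes at least $\frac{1}{|U|}\sum_{H}|H\cap U|\,(1+\mu)^{-|H\setminus M_r|}$, where $U$ is the set of unclaimed elements of the current sub-board; for $k$-uniform $\cH$ this is at most a $k/|U|$ fraction of $\Phi$, and $|U|$ is typically of order $n^2$ while $q$ is polynomially smaller, so $q$ picks cannot yield a $(1+q)^{-1}$ factor. The multiplicative decoupling of the two players simply does not hold.

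The Erd\H{o}s--Selfridge/Beck argument, which the paper follows, instead writes $\Phi(M_{r+1},B_r)=\Phi(M_r,B_{r-1})-\sum_{j=1}^q\Phi(M_r,B_{r-1,j-1},b_r^{(j)})+\mu\sum_{j=1}^p\Phi(M_{r,j-1},B_r,m_{r+1}^{(j)})$ and compares the two sums term by term: each Maker gain term is at most $(1+\mu)^{j-1}\Phi(M_r,B_{r-1,q-1},m_{r+1}^{(j)})$ by \eqref{T1} and \eqref{T2}, which in turn is at most $(1+\mu)^{j-1}\Phi(M_r,B_{r-1,q-1},b_r^{(q)})$ because $b_r^{(q)}$ was chosen to \emph{maximise} this danger function over a set of unclaimed elements containing every $m_{r+1}^{(j)}$. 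This pointwise domination --- not an average over the board --- is exactly where the dynamic-board invariant you correctly identified (Maker's Option-(a) picks lie in the sub-board over which Breaker just optimised) is used. Summing, $\mu\sum_{j=1}^p(1+\mu)^{j-1}=(1+\mu)^p-1=q$ is matched against the $q$ Breaker terms, each of which is at least $\Phi(M_r,B_{r-1,q-1},b_r^{(q)})$. Your Option-(b) case and your structural observation about the sub-boards are fine; the quantitative skeleton needs to be replaced by this term-by-term comparison.
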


\begin{proof} 
If Maker has played according to Option (b) in round $r+1$, then $\Phi(M_{r+1}, B_r) = \Phi(M_r, B_r) \le \Phi(M_r, B_{r-1})$, where the inequality follows from \eqref{T2}. Therefore, if Breaker was not able to claim $q$ elements in round $r$, then in round $r+1$ Maker is forced to play Option (b) and the claim follows. For the rest of the proof we can assume that Breaker is able to claim $q$ elements in round $r$. Without loss of generality, we can also assume that Maker claims $p$ elements in round $r+1$ (claiming fewer than $p$ elements only makes it easier for the desired inequality to hold). Let us denote these elements by $b_r^{(1)}, \ldots, b_r^{(q)}$ and $m_{r+1}^{(1)}, \ldots, m_{r+1}^{(p)}$, respectively.

We first note that $\Phi(M_r,B_{r-1,j+1})=\Phi(M_r,B_{r-1,j})-\Phi(M_r,B_{r-1,j},b_r^{(j+1)})$ for all $0\le j<q$, and 
 $\Phi(M_{r,j+1},B_{r})=\Phi(M_{r,j},B_{r})+\mu\Phi(M_{r,j},B_{r},m_{r+1}^{(j+1)})$ for all $0\le j<p$.  Hence, 
\begin{align*}
\Phi(M_{r+1},B_r)&= \Phi(M_r,B_{r-1}) 
	- \sum_{j=1}^q \Phi(M_r,B_{r-1,j-1},b_r^{(j)}) 
	+\mu \sum_{j=1}^p \Phi(M_{r,j-1},B_{r},m_{r+1}^{(j)}).
\end{align*}
% if Maker plays Option (a), and 
% $\Phi(M_{r+1},B_r)=\Phi(M_r,B_{r-1}) - \sum_{j=1}^q \Phi(M_r,B_{r-1,j-1},b_r^{(j)}) $ if Maker plays Option (b) in round $r+1$. In the latter case, this implies the claim since $\Phi(\cdot,\cdot,\cdot)$ is always non-negative. So we may assume that Maker plays Option (a). The remaining argument is then analogous to the argument in \cite{BeckBook} page 299f. 
% %
It suffices to show  
\begin{align}
\lab{aux343}
&\mu \sum_{j=1}^p \Phi(M_{r,j-1},B_{r},m_{r+1}^{(j)}) \le \sum_{j=1}^q \Phi(M_r,B_{r-1,j-1},b_r^{(j)}).
\end{align}
First note that for all $1\le j\le p$, 
$$\Phi(M_{r,j-1},B_{r},m_{r+1}^{(j)})\le (1+\mu)^{j-1}\Phi(M_{r},B_{r},m_{r+1}^{(j)})\le (1+\mu)^{j-1}\Phi(M_{r},B_{r-1,q-1},m_{r+1}^{(j)}),$$
where the first inequality follows from~\eqref{T1} and the second from~\eqref{T2}. 
Furthermore, the element $b_r^{(q)}$ is chosen by Breaker {\em before} Maker claims any of $m_{r+1}^{(j)}$  
and it is chosen to maximise the expression $\Phi(M_{r},B_{r-1,q-1},z)$ 
{\em over all unclaimed $z\in V_{r+1}$} (note that Maker sets $V_{r+1}$ after her move in round $r$). Hence, we deduce 
\begin{align*}%\lab{aux344}
\Phi(M_{r,j-1},B_{r},m_{r+1}^{(j)})\le (1+\mu)^{j-1}\Phi(M_{r},B_{r-1,q-1},b_r^{(q)}), 
\end{align*}
which readily implies that 
\begin{align}
\lab{aux346}
\mu \sum_{j=1}^p \Phi(M_{r,j-1},B_{r},m_{r+1}^{(j)}) \le \mu \Phi(M_{r},B_{r-1,q-1},b_r^{(q)}) \sum_{j=1}^p (1+\mu)^{j-1}.
\end{align}
To bound the right hand side of~\eqref{aux343} we note that for all $1\le j < q$, 
\begin{align*}%\lab{aux345}
&\Phi(M_{r},B_{r-1,j-1},b_r^{(j)})\ge \Phi(M_{r},B_{r-1,j-1},b_r^{(q)})\ge \Phi(M_{r},B_{r-1,q-1},b_r^{(q)}),
\end{align*}
where the first inequality follows again by $b_r^{(j)}$ maximising $\Phi(M_{r},B_{r-1,j-1},z)$ among all unclaimed elements $z$ in $V_{r+1}$ (and $b_r^{(j)}$ is chosen before $b_r^{(q)}$), and the second inequality follows from~\eqref{T2}. 
This implies that 
$$\sum_{j=1}^q \Phi(M_{r},B_{r-1,j-1},b_r^{(j)})\ge q  \Phi(M_r,B_{r-1,q-1},b_r^{(q)}).$$
This and~\eqref{aux346} imply that the inequality in~\eqref{aux343} follows from 
$$\mu \Phi(M_{r},B_{r-1,q-1},b_r^{(q)}) \sum_{j=1}^p (1+\mu)^{j-1}
\le q \Phi(M_r,B_{r-1,q-1},b_r^{(q)}),$$ 
which is true since $\Phi(M_{r},B_{r-1,q-1},b_r^{(q)})\ge 0$ and $(1 + \mu)^p - 1 = q$.
\end{proof} 
 
To finish the proof of  \thref{ourLemma5} note that 
$\Phi (\emptyset,\emptyset) = f(\cH,p,q)$ and that after Maker's move in the first round we have 
$\Phi(M_1,B_0) = \Phi(M_1,\emptyset) \le (1+\mu)^p f(\cH,p,q) = (1 + q) f(\cH, p ,q)$. 
By the above claim, the function $f(M,B)$ is decreasing if evaluated after Maker's move. Hence, at the end of the game, we have 
$\Phi(M,B)\le \Phi(M_1,B_0)\le (1+q) f(\cH,p,q)$, where now $M$ and $B$ denote the final set of elements that Maker and Breaker claimed, respectively. On the other hand, if Maker occupies all elements of an edge $H\in E(\cH)$ (at any point of the game), then the additive contribution to $\Phi(M,B)$ is 1 for each such set. 
Thus, if $(1+q)f(\cH,p,q)<k$ then Breaker has a strategy in the dynamic Maker--Breaker game so that at the end of the game Maker fully occupies fewer than $k$ sets $H\in E(\cH)$. 
\end{proof}

A complete proof of \thref{ourDynamicBreaker} including all details would involve repeating the proof of \cite[Theorem~1~(ii)]{bl2000} and replacing \cite[Lemma 5]{bl2000} by \thref{ourLemma5} wherever it occurs. Instead, we give an overview of the argument in~\cite{bl2000} and describe where the changes are needed. We leave out calculations that are identical. 

Without loss of generality we may assume that $H$ is such that $m_2(H)$ achieves maximum for $H' = H$, i.e.\ $m_2(H) = (e(H) - 1)/(v(H) - 2)$. If this is not the case, then Breaker can choose $H' \subseteq H$ which determines $m_2(H)$ (which might not be unique), and prevent Maker from creating $H'$. Call such an $H$ {\em $m_2$-maximal.} 

Bednarska and \L uczak first identify the dangerous structures that Maker can create during a $b$-biased $H$-game on $K_n$. We say that a graph $F$ is an $\bar H$-graph if it contains two vertices $v$, $w$ such that $F+vw$ is isomorphic to $H$. We shall write $F^{vw}$ to specify such vertices in $F$. Let $\cF=\{F_1^{v_1w_1},\ldots,F_t^{v_tw_t}\}$ be a family of different $\bar H$-graphs, whose vertex sets may intersect. Then $\cF$ is called a {\em $t$-fan} if $|\bigcap_{i=1}^t V(F_i^{v_iw_i})|\ge 2$, and it is called a {\em $t$-flower} if $|\bigcap_{i=1}^t V(F_i^{v_iw_i})|\ge 3$. Furthermore, a $t$-fan $\cF$ is called {\em simple} if $|\bigcap_{i=1}^t V(F_i^{v_iw_i})|=2$. If at some point during the game Maker's graph contains some $\bar H$-graph $F^{vw}$ such that the pair $\{v,w\}$ has not been claimed by Breaker, then $F^{vw}$ is {\em dangerous}. Similarly, a $t$-fan $\cF$ is called {\em dangerous} if its elements are dangerous subgraphs of Maker's graph. 

The following is the dynamic-board variant of~\cite[Lemma 9]{bl2000}.
\begin{lemma}\thlab{ourLemma9}
For every $m_2$-maximal graph $H$ that contains a cycle there exist positive constants $C$ and $n_0$ such that for every $n\ge n_0$ and $q\ge Cn^{1/m_2(H)}$ Breaker has a strategy such that at no stage of the dynamic $q$-biased $H$-game on $K_n$ Maker's graph contains a dangerous $q$-fan. 
\end{lemma}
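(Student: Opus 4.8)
The plan is to follow the proof of \cite[Lemma 9]{bl2000} essentially line by line, the one substantive change being that every appeal to the static Erd\H{o}s--Selfridge/Beck potential criterion (\cite[Lemma 5]{bl2000}) is replaced by its dynamic-board analogue, \thref{ourLemma5}. Recall the shape of the argument: Breaker does not try to ``defuse'' individual dangerous $\bar H$-graphs, but instead plays an auxiliary \emph{dynamic-board $(\mathcal H^*,1,q)$-game} on the board $E(K_n)$, where $\mathcal H^*$ is the hypergraph whose hyperedges are the edge sets of the family $\mathcal C$ of ``critical configurations'' of \cite[Lemma 9]{bl2000}. This family is assembled from simple fans with $\Theta(\log n)$ petals and from flowers with an appropriately chosen number of petals, the sizes being fixed --- and this is where $m_2$-maximality of $H$ enters --- so that the following two properties hold: \textbf{(Covering)} whenever Maker's graph contains a dangerous $q$-fan, it contains as a subgraph the edge set of some $D\in\mathcal C$ (every $q$-fan either has exactly two common vertices, in which case one extracts a simple $\Theta(\log n)$-fan or a small flower, or has at least three common vertices, in which case it is itself a flower of the required size; since $q\ge Cn^{1/m_2(H)}\gg\log n$ the sizes fit); and \textbf{(Potential)} $f(\mathcal H^*,1,q)=\sum_{D\in\mathcal C}(1+q)^{-e(D)}<(1+q)^{-1}$ once $q\ge Cn^{1/m_2(H)}$ with $C=C(H)$ sufficiently large.

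Granting these, the lemma is immediate. I would invoke \thref{ourLemma5} with Maker bias $1$ and Breaker bias $q$: it gives Breaker a strategy in the dynamic-board $(\mathcal H^*,1,q)$-game after which at most $(1+q)f(\mathcal H^*,1,q)<1$ hyperedges are fully claimed by Maker at the end of the game, and hence --- since this count never decreases --- no hyperedge of $\mathcal H^*$ is ever fully claimed by Maker. Breaker plays exactly this strategy inside the real dynamic $q$-biased $H$-game: a Maker move of type Option~(a) (claiming an edge $e$) is mirrored by claiming $e$ in the auxiliary game, a Maker move of type Option~(b) is mirrored by the corresponding enlargement of the active board (no edge being claimed), and Breaker's $q$ auxiliary replies --- which always lie in the currently revealed part of the board --- are taken as his $q$ moves in $K_n$. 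Since Maker never occupies the full edge set of any $D\in\mathcal C$, property \textbf{(Covering)} shows that her graph never contains a dangerous $q$-fan, which is what we want.

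It remains to see that the passage from the static setting of \cite{bl2000} to the present dynamic-board setting costs nothing beyond \thref{ourLemma5}. Properties \textbf{(Covering)} and \textbf{(Potential)} are purely statements about subgraphs of $K_n$ and about the value $f(\mathcal H^*,1,q)$; neither mentions when or how the board is revealed, and the counting of copies of the configurations in $\mathcal C$ --- which is verbatim the counting in \cite{bl2000} --- is an upper bound valid for the full $K_n$, hence a fortiori for any active sub-board. The only point at which the dynamics intervene is the potential argument itself --- that the potential does not increase after Maker's move even when Maker enlarges the board rather than claiming an element --- and that is precisely the content of \thref{ourLemma5}. Thus the entire computation of \cite[Lemma 9]{bl2000}, in particular the verification of \textbf{(Potential)} (where $m_2$-maximality is used to control the edge density of the unions of the critical configurations), carries over unchanged, and I would not reproduce it. The hard part is therefore entirely inherited from \cite{bl2000}: designing $\mathcal C$ so that it simultaneously covers every dangerous $q$-fan and has total weight below $(1+q)^{-1}$, which is the technical core and the place where the hypothesis that $H$ is $m_2$-maximal is essential (a denser subgraph $H'\subsetneq H$ would push the threshold above $n^{1/m_2(H)}$). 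By comparison, the adaptation actually carried out here --- isolating the single point where the dynamic board could cause trouble and discharging it through \thref{ourLemma5} --- is routine.
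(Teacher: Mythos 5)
Your guiding observation --- that the dynamic board enters only through the potential-function lemma, while the combinatorial design of the forbidden configurations is a static matter inherited from Bednarska--\L uczak --- is correct and is exactly the philosophy of the proof in the paper. However, the specific reduction you propose does not match what \cite[Lemma~9]{bl2000} actually establishes, and the step you defer to that source is not there. You posit a single hypergraph $\mathcal H^*$ of critical configurations with total potential $f(\mathcal H^*,1,q)<(1+q)^{-1}$, so that one application of \thref{ourLemma5} forbids \emph{every} configuration outright. For the simple-fan part such a bound is not available: \cite[Lemma~7]{bl2000} (and its dynamic analogue \thref{ourLemma7}) only proves $(q+1)f(\cH_2,1,q)<\tfrac12\binom{q}{t}$ for the hypergraph of simple $t$-fans with $t=q^{\delta/3}$ (not $\Theta(\log n)$); that is, it cannot prevent all simple $t$-fans, only ensure that fewer than $\tfrac12\binom{q}{t}$ of them are ever fully claimed. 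The members of a fan may overlap heavily outside their common pair, so the union can have few edges while the number of such configurations remains enormous; this is precisely why the potential cannot be pushed below $1$, and why the weaker quantitative conclusion --- combined with the fact that a single simple dangerous $q$-fan would itself contain at least $\tfrac12\binom{q}{t}$ simple $t$-fans --- is what actually closes the argument. Your property (Covering) as stated (``every dangerous $q$-fan contains some $D\in\mathcal C$'') is likewise not the statement used in \cite{bl2000}: the combination step there is a pigeonhole argument over the $\binom{q}{t}$ sub-$t$-fans of a putative dangerous $q$-fan, splitting into the case where at least half are simple and the case where at least half share a third common vertex (which yields a dangerous $s$-flower).

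The paper's proof therefore keeps the two-stage structure: Breaker splits his bias into two halves and runs two independent dynamic potential-function strategies --- one (\thref{ourLemma6}) applied to $t$-\emph{clusters} for a \emph{constant} $t=t(H)$, whose total potential genuinely is below $1$ and whose prevention deterministically rules out dangerous $s$-flowers with $s=q^{1-\delta}$, and one (\thref{ourLemma7}) applied to simple $t$-fans with $t=q^{\delta/3}$, yielding only the $\tfrac12\binom{q}{t}$ bound on the number fully claimed --- and then invokes the final counting argument of \cite[Lemma~9]{bl2000}. If you wish to retain a single-hypergraph formulation, you would have to prove your (Potential) bound from scratch, and for simple fans that is exactly the point that fails.
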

Before sketching the proof of this lemma let us briefly explain how it implies \thref{ourDynamicBreaker}. 
When $H$ is a forest the argument is identical to the one given in~\cite{bl2000}: The requirement $b\ge 2n^{1/m_2(H)}$ implies that Breaker has a simple strategy in either case of~$H$ containing a path on three vertices or consisting of disjoint edges. 
When $H$ contains a cycle let $b\ge 2Cn^{1/m_2(H)}$ where $C$ is the constant given by \thref{ourLemma9}. Breaker's strategy in the dynamic $b$-based $H$-game played on $K_n$ is as follows. In every round, Breaker claims $b/2$ edges to follow the strategy of \thref{ourLemma9} and thus prevents Maker from building a dangerous $(b/2)$-fan. The remaining $b/2$ choices are used to block the pairs $v_1w_1,\ldots,v_tw_t$ of a dangerous $t$-fan $\{F_1^{v_1w_1},\ldots,F_t^{v_tw_t}\}$, if there is one for some $t\le b/2$. Note that this is indeed a winning strategy. Suppose that Maker can claim a copy of $H$, say in round $r$ of the game. Then Maker has created a dangerous $t$-fan for some $t\ge 1$ in round $r-1$ (or earlier). Since Maker plays with bias 1, she cannot create more than one dangerous $t$-fan per round. By \thref{ourLemma9}, $t\le b/2$, thus Breaker would block such a dangerous $t$-fan in round $r-1$. 

We now sketch the proof of \thref{ourLemma9}. The strategy for Breaker is based, yet again, on two strategies that are followed each with bias $q/2$. The following two lemmata encapsulate these strategies, they are the dynamic-board variants of Lemma~6 and Lemma~7 of~\cite{bl2000}, respectively. 
\begin{lemma}\thlab{ourLemma6}
For every $m_2$-maximal graph $H$ that contains a cycle there exist positive constants $C_1$, $n_1$ and $\delta <1$ such that for every $n\ge n_1$ and $q\ge C_1n^{1/m_2(H)}$ Breaker has a strategy such that at each moment of the dynamic $q$-biased $H$-game on $K_n$ there are no dangerous $s$-flowers in Maker's graph, where $s=q^{1-\delta}$. 
\end{lemma}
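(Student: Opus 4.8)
The plan is to adapt the proof of Lemma~6 in~\cite{bl2000} to the dynamic-board setting, using \thref{ourLemma5} in place of the static potential-function lemma. The strategy for Breaker will be to play the potential-function strategy from \thref{ourLemma5} on an auxiliary hypergraph $\cH$ whose vertex set is the (current) board of edges of $K_n$ and whose hyperedges encode dangerous $s$-flowers: more precisely, for each potential $s$-flower configuration (a family of $s$ distinct $\bar H$-graphs sharing at least three common vertices, none of whose special pairs $v_iw_i$ has been claimed by Breaker), we put a hyperedge consisting of the union of the edge sets of the $F_i$ together with the $s$ pairs $v_iw_i$. As in~\cite{bl2000}, a dangerous $s$-flower in Maker's graph then corresponds exactly to Maker having claimed all vertices of such a hyperedge, so it suffices to show $(1+q)f(\cH, 1, q) < 1$, i.e.\ $\sum_{H \in E(\cH)} (1+q)^{-|H|} < (1+q)^{-1}$, for the choice $q \ge C_1 n^{1/m_2(H)}$.

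First I would carry out the combinatorial count: bound the number of $s$-flower hyperedges with a given number of vertices and edges, exactly as in~\cite{bl2000}. Writing $k = v(H)$ and $\ell = e(H)$, a single $\bar H$-graph $F^{vw}$ uses $k$ vertices and $\ell - 1$ edges, and an $s$-flower glues $s$ of them along a common core of $\ge 3$ vertices; the key point established in~\cite{bl2000} is that by $m_2$-maximality of $H$, every $\bar H$-graph $F^{vw}$ satisfies $e(F) / (v(F) - 3) \ge $ (some quantity bounded below in terms of $m_2(H)$), which forces each additional petal of the flower to contribute enough edges relative to new vertices so that the total hyperedge size $|H|$ in $\cH$ is at least roughly $s \cdot m_2(H) \cdot$(something), while the number of such hyperedges on a fixed vertex set is at most $n^{O(\text{new vertices})}$. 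Summing the geometric-type series over all choices of the new vertices and petals and using $q \ge C_1 n^{1/m_2(H)}$ then makes $f(\cH,1,q)$ exponentially small, so $(1+q) f(\cH,1,q) < 1$ as required. These calculations are essentially verbatim those of~\cite{bl2000} and I would only indicate the exponents, not reproduce them.

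The one genuinely new point is that the board is revealed over time, so I must check that the dynamic version of the potential argument, \thref{ourLemma5}, applies to this auxiliary hypergraph. Here the subtlety is that a hyperedge of $\cH$ is only ``present'' once all its vertices (edges of $K_n$) have entered the board; but in our application, enlarging the board $V_i$ corresponds precisely to Maker choosing to make a move on a previously-untouched part of $K_n$, and once revealed a vertex stays in the board forever, so $\cH$ can be taken to be the full (static) hypergraph on all of $E(K_n)$ from the outset — a hyperedge just happens to contribute to the potential throughout. Thus \thref{ourLemma5} applies with $\cH$ fixed in advance, $p = 1$, $q = q$, and the bound $(1+q)f(\cH,1,q)$ on the number of Maker-occupied hyperedges is exactly what we need; if this is $< 1$ then Maker never completes such a hyperedge, i.e.\ never builds a dangerous $s$-flower. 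I would also remark, as the excerpt does for its other sketched lemmas, that once the dynamic potential lemma \thref{ourLemma5} is in hand the remainder is identical to~\cite{bl2000} and can be omitted.

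The main obstacle I anticipate is not in the estimates themselves — those are imported from~\cite{bl2000} — but in the bookkeeping needed to phrase ``dangerous $s$-flower'' as a clean hyperedge condition compatible with the dynamic framework: one has to be careful that the special pairs $v_iw_i$ (which are \emph{not} claimed by Breaker in a dangerous flower) are treated correctly, since the potential function in \thref{ourLemma5} penalises hyperedges that intersect Breaker's set, which is exactly the effect we want (a flower stops being dangerous the moment Breaker claims one of its pairs). Once this translation is set up correctly, the proof of \thref{ourLemma6} is a direct application of \thref{ourLemma5} together with the flower-counting bounds of~\cite[Lemma~6]{bl2000}, and I would present it at that level of detail.
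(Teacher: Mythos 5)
Your high-level plan (import the counting from \cite{bl2000}, replace the static potential lemma by \thref{ourLemma5}) is the right instinct, but the specific reduction you propose is not the one the paper uses, and it has two genuine gaps.

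First, the paper does \emph{not} apply the potential function directly to $s$-flowers. Following \cite[Lemma 6]{bl2000}, it applies \thref{ourLemma5} to the hypergraph $\cH_1$ of \emph{$t$-clusters} — unions of $t$ full copies of $H$ sharing at least $3$ vertices — for a large \emph{constant} $t=t(H)$ (and with a reduced bias $q_1=q^{1-\delta_1/2}$ in the computation of $f$), and then uses a separate combinatorial argument that a Maker graph containing no $t$-cluster contains no dangerous $s$-flower with $s=q^{1-\delta}$. Your one-step version skips this reduction, and the count it requires does not close. Concretely: an $s$-flower whose petals meet only in the $3$ core vertices has about $3+s(k-3)$ vertices but only about $s(\ell-4)$ edges, where $k=v(H)$, $\ell=e(H)$; with $q\asymp n^{1/m_2(H)}=n^{(k-2)/(\ell-1)}$, making $\sum(1+q)^{-|E|}$ small over the $\approx n^{3+s(k-3)}$ such configurations requires $(k-3)(\ell-1)<(k-2)(\ell-4)$, i.e.\ $e(H)>3v(H)-5$. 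This fails for $H=K_3,K_4,K_5$ — precisely the graphs $K_{r-1}$ needed for $r=4,5,6$ in Theorem~\ref{thm:main}. So the ``essentially verbatim'' calculation you defer to does not exist in this form; the whole point of the cluster detour in \cite{bl2000} is to replace the too-sparse flower by a dense substructure for which the potential count does work.

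Second, your encoding of dangerousness is incompatible with \thref{ourLemma5}. That lemma bounds the number of hyperedges \emph{all of whose vertices Maker has claimed}. In a dangerous $s$-flower the pairs $v_iw_i$ are merely unclaimed by Breaker — Maker need not hold them — so a hyperedge that includes these pairs is typically not fully Maker-claimed, and the lemma gives no control over it. Your remark that the potential ``penalises hyperedges that intersect Breaker's set, which is exactly the effect we want'' gets this backwards: the penalisation removes from the count exactly the configurations you are trying to detect. (Dropping the pairs and blocking \emph{all} $s$-flowers would repair the logic but runs into the counting failure above.) Your treatment of the dynamic-board aspect — fixed hypergraph, growing accessible board — is consistent with how \thref{ourLemma5} is set up and is fine.
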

\begin{lemma}\thlab{ourLemma7}
For every $m_2$-maximal graph $H$ that contains a cycle and every positive $\delta <1$, there exist constants $C_2$ and $n_2$ such that for every $n\ge n_2$ and $q\ge C_2n^{1/m_2(H)}$ Breaker has a strategy in the dynamic $q$-biased $H$-game on $K_n$ which does not allow Maker to build $\frac12 \binom{q}{t}$ simple $t$-fans, where $t=q^{\delta/3}.$ 
\end{lemma}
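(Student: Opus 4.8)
\textbf{Proof plan for \thref{ourLemma7}.}
The plan is to mimic the proof of \cite[Lemma 7]{bl2000} almost verbatim, using \thref{ourLemma5} in place of \cite[Lemma 5]{bl2000}, and to check that the dynamic enlargement of the board does not interfere with the argument. A simple $t$-fan is obtained by gluing $t$ distinct $\bar H$-graphs $F_i^{v_iw_i}$ along a common pair $\{v,w\}$. We therefore set up an auxiliary hypergraph $\mathcal{G}$ whose vertices are the edges of $K_n$ and whose hyperedges are the edge sets of all \emph{potential} simple $t$-fans (that is, all $t$-tuples of $\bar H$-graphs sharing exactly one pair of vertices and otherwise being sufficiently generic), and we play the dynamic-board $(\mathcal{G},1,q/2)$-game on it. By \thref{ourLemma5} it suffices to show $(1+q/2)f(\mathcal{G},1,q/2)<\tfrac12\binom{q}{t}$, where $f(\mathcal{G},1,q/2)=\sum_{G\in E(\mathcal{G})}(1+q/2)^{-|G|}$, because each fully-Maker-occupied hyperedge is precisely a completed simple $t$-fan.

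First I would bound the number of simple $t$-fans with a fixed number $j$ of edges. A simple $t$-fan on $t$ copies of the $\bar H$-graph has at most $2+t(v(H)-2)$ vertices and its edge count $j$ is at least $t(e(H)-1)$; since $H$ is $m_2$-maximal, $e(H)-1=m_2(H)(v(H)-2)$, so a fan with $j$ edges spans at most $2+j/m_2(H)$ vertices (up to lower-order corrections coming from extra shared vertices, which only \emph{decrease} the vertex count). Hence the number of such fans is at most $n^{2+j/m_2(H)}$ times a combinatorial factor counting how the copies overlap, which is bounded by a function of $H$ and $t$ alone — precisely the same count as in~\cite{bl2000}. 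Summing the geometric-type series $\sum_j n^{2+j/m_2(H)}(1+q/2)^{-j}$ and using $q\ge C_2 n^{1/m_2(H)}$ with $C_2$ large, the $n^{j/m_2(H)}(q/2)^{-j}$ factors are bounded by $(2/C_2)^j$ and the sum collapses; the surviving $n^2$ is then dwarfed by $\binom{q}{t}\ge (q/t)^t = n^{\Omega(t/m_2(H))}$ once $t=q^{\delta/3}$ is polynomially large in $n$. This is exactly the calculation in~\cite{bl2000}, so I would simply cite it after noting that the relevant bound on $f(\mathcal{G},1,q/2)$ is unchanged.

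The one genuinely new point — and the step I expect to require the most care — is verifying that the \emph{dynamic} nature of the board does not break the reduction. In our intended application (Case~2.1 of the proof of Theorem~\ref{thm:main}(ii)) the board of the $K_{r-1}$-game is revealed vertex-by-vertex, i.e.\ edge-by-edge of $K_T$. When we phrase \thref{ourLemma7} for the dynamic $q$-biased $H$-game we must make sure that: (i) a hyperedge of $\mathcal{G}$ (a potential simple $t$-fan) only becomes ``active'' once all of its vertices have entered $V_i$, and its contribution $(1+q/2)^{-|G|}$ to $f$ is the same whenever it is counted, so the bound on $f(\mathcal{G},1,q/2)$ computed for the full board is an upper bound at every stage; and (ii) Maker's Option~(a) moves are always on the already-revealed sub-board, which is exactly the hypothesis under which \thref{ourLemma5} was proved. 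Both are immediate from the definition of the dynamic-board game, but they should be stated explicitly. With that in hand, Breaker's strategy is: run the \thref{ourLemma5}-strategy for $\mathcal{G}$ with bias $q/2$; by the potential computation Maker never completes a hyperedge of $\mathcal{G}$, i.e.\ never builds $\tfrac12\binom{q}{t}$ simple $t$-fans. (If one prefers to bound the total number of simple $t$-fans rather than forbidding a single one, one takes $\mathcal{G}$ to have as hyperedges the unions of $\tfrac12\binom{q}{t}$ vertex-disjointly-completable fans, exactly as in~\cite{bl2000}; the edge-counting is identical.) I would keep the write-up short, emphasising only points (i)–(ii) and otherwise deferring to~\cite{bl2000}.
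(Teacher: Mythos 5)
Your proposal follows essentially the same route as the paper: take the auxiliary hypergraph whose hyperedges are the edge sets of simple $t$-fans, apply the dynamic-board potential criterion (\thref{ourLemma5}), and defer the bound $(q+1)f(\cdot,1,q)<\tfrac12\binom{q}{t}$ to the calculation in \cite[Lemma 7]{bl2000}; your points (i)--(ii) about the dynamic board are exactly why \thref{ourLemma5} was proved in the dynamic setting, and they are indeed immediate. Two small corrections. First, the lemma is applied with bias $q$, not $q/2$; the halving into two parallel strategies happens one level up, in the proof of \thref{ourLemma9}. Second, your main-line conclusion ``Maker never completes a hyperedge of $\mathcal{G}$'' is not what the argument gives and is in general false: since distinct $\bar H$-graphs in a simple $t$-fan may still pairwise overlap, some fans have far fewer than $t(e(H)-1)$ edges and $f$ cannot be pushed below $1$ (this is precisely why the companion flower lemma, \thref{ourLemma6}, is needed). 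The correct reading of \thref{ourLemma5} here is the counting form -- at most $(1+q)f<\tfrac12\binom{q}{t}$ hyperedges get fully occupied -- which is the inequality you in fact wrote down; your parenthetical alternative is unnecessary.
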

To prove \thref{ourLemma9}, let $C_1$, $n_1$, $\delta$ be given as in \thref{ourLemma6}, let $C_2$ and $n_2$ be as in \thref{ourLemma7}, and set $C=2\max\{C_1,C_2\}$ and let $n$ be large enough. 
Let $\cS_1$ be the strategy given by \thref{ourLemma6} for the dynamic $(q/2)$-biased $H$-game on $K_n$, and let $\cS_2$ be the strategy given by \thref{ourLemma7} for the dynamic $(q/2)$-biased $H$-game on $K_n$. Then, using $q/2$ edges to follow strategy $\cS_1$ and $q/2$ edges to follow strategy $\cS_2$, Breaker can ensure that at no point during the dynamic $q$-biased $H$-game on $K_n$ Maker's graph contains a dangerous $s$-flower with $s= (q/2)^{1-\delta}$, nor $\frac12 \binom{q/2}{t}$ simple $t$-fans with $t=(q/2)^{\delta/3}.$ 
The reasoning why this implies that Maker's graph never contains a dangerous $q$-fan is exactly the same as in~\cite{bl2000}, see the last paragraph of the proof of Lemma~9 therein. 
 
Finally, we explain the proofs of \thref{ourLemma6,ourLemma7}. 

For \thref{ourLemma6}, define a {\em $t$-cluster} to be a graph consisting of $t$ copies of $H$ that intersect in at least 3 vertices (the difference to a $t$-flower is that here the building blocks are copies of $H$ rather than $\bar H$-graphs). Let $\cH_1$ be the collection of all edge sets on $K_n$ that form a copy of a {\em $t$-cluster}, where $t=t(H)$ is a large enough constant. 
Further, let $q_1 = q^{1-\delta_1/2}$ where $\delta_1=\delta_1(H)$ is a suitable constant as defined at the beginning of the proof of~\cite[Lemma 6]{bl2000}. The choice of parameters is identical to those in~\cite[Lemma 6]{bl2000}. Hence, calculations identical to those in~\cite[Lemma 6]{bl2000} show that $(q+1)f(\cH_1,1,q_1)<1$. It follows from \thref{ourLemma5} that Breaker has a strategy in the dynamic-board $(\cH_1,1,q)$-game to prevent Maker from claiming all edges of a $t$-cluster. 
We pause for a psychological note: This strategy does not forbid the creation of one copy of $H$ per se, which may seem contradictory to the final goal. For the strategy of \thref{ourLemma6} this is irrelevant though and shall not concern us. 
It then follows along the lines of the last two paragraphs of~\cite[Lemma 6]{bl2000} that by preventing such a $t$-cluster for sufficiently large $t$ (depending only on $H$), Breaker can also prevent that, in the dynamic $q$-biased $H$-game on $K_n$, Maker claims the edge set of a dangerous $s$-flower, for $s=q^{1-\delta}$, $\delta = \delta_1/4$, and $q\ge n^{1/m_2(H)}.$ 

For \thref{ourLemma7}, let $\cH_2$ consist of the edge sets of simple $t$-fans in $K_n$, for $t$ as in the lemma statement. Following the calculations in the proof of~\cite[Lemma 7]{bl2000} we find that $(q+1)f(\cH_2,1,q)<\frac12\binom{q}{t}$ if $q\ge C_2n^{1/m_2(H)}$ where $C_2$ and $n$ are large enough. It follows that Breaker has a strategy in the dynamic-board $(\cH_2,1,q)$-game such that at the end of the game at most $\frac12\binom{q}{t}$ of all $t$-fans are fully occupied by Maker.

\bibliographystyle{abbrv}
\bibliography{references}

\end{document}